\newcounter{fig}
\newcommand{\mybic}{\author{Gianluca Cassese}
                     \address{Universit\`{a} Milano Bicocca}
                     \email{gianluca.cassese@unimib.it}
                     \curraddr{Department of Economics, Statistics and Management 
                                  Building U7, Room 2097, via Bicocca 
                                  degli Arcimboldi 8, 20126 Milano - Italy}}
\newcommand{\Sim}{\mathscr{S}} 
\DeclareMathOperator*{\co}{co}
\DeclareMathOperator*{\arginf}{arginf}
\newcommand{\Ring}{\mathscr R}
\newcommand{\B}{\mathfrak{B}} 
\newcommand{\A}{\mathscr{A}} 
\newcommand{\F}{\mathscr{F}}
\newcommand{\R}{\mathbb{R}} 
\newcommand{\N}{\mathbb{N}} 
\newcommand{\Q}{\mathbb{Q}}
\newcommand{\Bor}{\mathscr{B}}
\newcommand{\RR}{\overline{\R}}
\newcommand{\quot}[1]{\textit{``#1''}}
\newcommand{\qtext}[1]{\quad\text{#1}\quad}
\newcommand{\abs}[1]{\vert #1\vert} 
\newcommand{\babs}[1]{\big\vert #1\big\vert}
\newcommand{\restr}[2]{#1\vert #2}
\newcommand{\net}[3]{\langle #1_{#2}\rangle_{#2\in #3} } 
\newcommand{\neta}[1]{\net{#1}{\alpha}{\mathfrak A}} 
\newcommand{\nnet}[3]{\langle #1\rangle_{#2\in #3} } 
\newcommand{\seq}[2]{\net{#1}{#2}{\mathbb{N}}} 
\newcommand{\sseq}[2]{\nnet{#1}{#2}{\mathbb{N}}} 
\newcommand{\seqn}[1]{\seq{#1}{n}} 
\newcommand{\sseqn}[1]{\sseq{#1}{n}}
\newcommand{\norm}[1]{\Vert #1\Vert}
\newcommand{\set}[1]{\mathds{1}_{#1}}
\newcommand{\sset}[1]{\mathds{1}_{\{#1\}}}
\newcommand{\cl}[2][ ]{\overline{#2}^{\ #1}}
\newcommand{\cco}[1][ ]{\overline\co^{#1}}
\newcommand{\Cts}[2][]{\mathscr{C}_{#1}(#2)}
\newcommand{\Lin}[1]{\mathfrak{L}(#1)}
\newcommand{\bLin}[1]{\mathfrak{L}\big(#1\big)}
\newcommand{\Fun}[1]{\mathfrak{F}(#1)}
\newcommand{\bFun}[1]{\mathfrak{F}\big(#1\big)}
\newcommand{\emp}{\varnothing}
\newcommand{\0}{\emptyset}
\newcommand{\iref}[1]{(\textit{\ref{#1}})}
\newcommand{\imply}[2]{\iref{#1}$\Rightarrow$\iref{#2}}
\newcommand{\tiref}[1]{(\textit{#1})}
\newcommand{\timply}[2]{(\textit{#1})$\Rightarrow$(\textit{#2})}
\newtheorem{theorem}{Theorem}
\theoremstyle{plain}
\newtheorem{corollary}{Corollary}
\newtheorem{definition}{Definition}
\newtheorem{example}{Example}
\newtheorem{lemma}{Lemma}
\newcommand {\Neg}{\mathscr N}
\newcommand {\Ha}{\mathscr H}
\newcommand {\Meas}{\mathscr M}
\DeclareMathOperator*{\con}{con} 
\newcommand{\ccon}[1][]{\overline{\con}^{#1}}
\begin{document} 
\title[Conglomerability]
{Conglomerability and the Representation of 
Linear Functionals}
\mybic
\date \today 
\subjclass[2010]{Primary: 28A25, 
Secondary: 46A22, 52A41.} 
\keywords{
Choquet integral representation, 
Conglomerability, 
Riesz representation, 
Skhorohod representation, 
Vector lattice.}

\begin{abstract} 
We prove results concerning the representation 
of linear functionals as integrals of a given random
 quantity $X$. The existence of such representation 
is related to the notion of conglomerability, originally 
introduced by de Finetti and Dubins. We show that 
this property has interesting applications in probability 
and in analysis. These include a version of the 
extremal representation theorem of Choquet, a proof 
of Skorohod theorem and of the statement that 
Brownian motion assumes whatever family of finite 
dimensional distributions upon a change of the 
probability measure. 
\end{abstract}

\maketitle

\section{Introduction.}
In this paper we study the classical problem of 
the integral representation of linear functionals 
with a degree of generality which does not permit
the direct application of classical techniques. 
Conglomerability is then necessary and sufficient 
to conveniently transform the original problem into 
one in which integral representation is indirectly
possible.  Although the fields in which our results 
may be fruitfully applied are disparate, we were 
motivated by the problem of 
{\it existence of companions} 
that arises in several places in probability and 
statistics.

Let $S$ and $\Omega$ be 
given, non empty sets, $\Ha$ a family of real 
valued functions on $S$ and $(X,m)$ a pair, with
$X$ a mapping of $\Omega$ into $S$ and $m$
a positive, finitely additive set function $m$  
on $\Omega$. 
Following  Dubins and Savage \cite{dubins savage}, 
we say that a pair $(X',\mu)$ on a 
set $\Omega'$ is a {\it companion} to $(X,m)$, 
relatively to $\Ha$, if it solves the 
equation
\begin{equation}
\label{problem}
h(X')\in L^1(\mu)
\qtext{and}
\int h(X)dm
	=
\int h(X')d\mu
\qquad
h\in\Ha,
h(X)\in L^1(m).
\end{equation}
The collection $\Ha$ is interpreted as a model of 
the information available. 

Finding a correct statistical model $X'$ for a 
given data sample is a problem fitting into 
\eqref{problem}: set $S=\Omega=\R$, let $X$ 
be the identity, $m$ the sample distribution 
and each $h\in\Ha$ a statistic. Given a predictive 
marginal $m$ on an algebra $\A$, a similar problem 
in Bayesian statistics is that of finding a parametric 
family 
$\mathcal Q=\{Q_\theta:\theta\in\Theta\}$ 
of probabilities and a prior $\lambda$ on the 
parameter space $\Theta$ such that
\begin{equation}
\label{bayes}
m(A)=\int_\Theta Q_\theta(A)d\lambda
\qquad
A\in\A.
\end{equation}

Dubins \cite{dubins} proved long ago that the 
existence of a disintegration formula similar to 
\eqref{bayes} is equivalent to {\it conglomerability}, 
a notion originally due to de Finetti \cite{de finetti} 
that has remained undeservedly neglected 
outside a limited number of distinguished authors 
(which include Schervisch et al. \cite{SSK}, Hill and 
Lane \cite{hill lane} and Zame \cite{zame}). The 
conglomerability property, we believe, may be 
formulated in more general terms than those in 
which it was originally stated and it may be applied 
to more ambitious problems in probability and in 
analysis than those for which it had been originally 
devised. 

We found it useful to write problem \eqref{problem}
in more abstract terms, replacing the integral on left 
hand side with a linear functional and modelling the 
action of $X'$ on $\Ha$ as a linear transformation. 
We solve this version of our problem in Theorem 
\ref{th representation}, obtaining a special integral 
representation for a conglomerative linear functional 
on an arbitrary vector space. The absence of any 
structure on the underlying space, save linearity, 
makes the claim significantly more general than 
classical Riesz representation theorems. In this 
functional analytic formulation, conglomerability 
may be nicely restated as a geometric property. 
In Corollary \ref{cor choquet} we show that if 
$\Phi$ and $\Psi$ are two sets of positive, linear 
functionals on a vector lattice then $\Phi$ is 
$\Psi$-conglomerative if and only if each $\phi\in\Phi$ 
is the barycentre of a measure supported by 
$\Psi$. In Corollary \ref{cor choquet V} we obtain 
a generalization of the original theorem of Choquet 
\cite{choquet}. 

Theorem \ref{th representation} admits a large 
number of implications, the most immediate of 
which is the existence of companions with or 
without additional conditions on the representing 
measure $\mu$, such as countable additivity or 
absolute continuity with respect to some given, 
reference set function. An immediate corollary 
is that, relatively to continuous functions, a normally
distributed random quantity is companion to 
\textit{any} $X$ and that Brownian motion can 
assume whatever family of finite dimensional 
distributions on $\R$ upon an appropriate choice 
of the underlying probability. We also provide 
applications to the classical Skhorohod 
representation theorem in the case in which $S$ 
is separable. 

In the closing section we prove some results 
concerning the representation of convex functions 
as integrals. We show that any convex function 
on $\R$ decomposes into the sum of a piece 
wise linear component and an integral part, a 
representation curiously near to the one popular 
in mathematical finance as a model for option 
prices.

All proofs are quite simple and, despite a natural 
interest for countable additivity, they are obtained 
by exploiting the theory of the finitely additive 
integral in which the measurability constraint is 
much less burdensome. We hope to disprove thus, 
at least partially, the harsh judgement of Bourgin 
\cite[p. 173]{bourgin} that \quot{an integral 
representation theory based on finitely additive 
measures is virtually useless}.

\section{Notation and Preliminaries.}
Throughout the paper the symbol 
$\Fun{\Omega,S}$ (resp. $\Fun\Omega$) 
denotes the family of functions mapping 
$\Omega$ into $S$ (resp. into $\R$) and 
$\mathfrak F$ is replaced with $\mathfrak L$, 
$\mathscr C$ or $\mathscr C_K$ when 
restricting to linear, continuous or continuous 
functions with compact support, respectively. 
A collection $\{f_y:y\in Y\}\subset\Fun{X,S}$ is 
also written as a function $f\in\Fun{X\times Y,S}$ 
with $f(x,y)=f_y(x)$ and viceversa. If 
$f\in\Fun{\Omega,S}$ and $A\subset\Omega$ 
the symbols $\restr f A$ and $f[A]$ designate 
the restriction of $f$ to $A$ and the image of 
$A$ under $f$. A subset $\Ha$ of $\Fun S$ 
is Stonean if $h\in\Ha$ implies $h\wedge 1\in\Ha$, 
where $1\in\Fun{S}$ indicates the function 
constantly equal to $1$. 

If $\A$ is a ring of subsets of $\Omega$, then 
$\Sim(\A)$ and $\B(\A)$ denote the families of 
$\A$ simple functions and its closure in the 
topology of uniform convergence, respectively. 
$fa(\A)$ (resp. $fa(\Omega)$) is the space of real 
valued, finitely additive set functions on $\A$ 
(resp. $2^\Omega$) and $ba(\A)$ the subspace 
of set functions of bounded variation. To indicate
that $\A$ is a ring of subsets of $\Omega$ and 
that $\lambda\in fa(\A)_+$, i.e. that $(\A,\lambda)$
is a measure structure on $\Omega$ we write 
more compactly
$(\A,\lambda)
\in
\Meas(\Omega).$ 

We recall a few definitions and facts relative to 
the finitely additive integral (see \cite{rao} and 
\cite{bible}) given $(\A,\lambda)\in\Meas(\Omega)$%
\footnote{
To be formal, we depart from the classical theory 
of Dunford and Schwartz which has an extended 
real valued set function on an algebra of sets as its 
starting point. Our notion of a simple function is
obtained from theirs after restricting to the family 
of sets of finite measure, a ring, and coincides
therefore with the notion of {\it integrable simple 
functions} of Dunford and Schwartz. Thus, our 
notion of measurability is more restrictive than 
that of {\it total measurability} given in \cite[III.2.10]
{bible} although integrable functions are defined 
by Dunford and Schwartz as being measurable in 
our restrictive sense.
}. 
$X\in\Fun{\Omega}$ is $\lambda$-measurable 
if there exists a sequence $\seqn X$ in $\Sim(\A)$ 
that $\lambda$-converges to $X$, i.e. such that
\begin{equation}
\lim_n\lambda^*(\abs{X_n-X}>c)=0
\quad\text{for every}\quad
c>0	
\end{equation}
where the set function $\lambda^*$ and its conjugate 
$\lambda_*$ are defined (with the convention 
$\inf\emp=\infty$) as 
\begin{equation}
\lambda^*(E)=\inf_{\{A\in\A:E\subset A\}}\lambda(A)
\quad\text{and}\quad
\lambda_*(E)=\sup_{\{B\in\A:B\subset E\}}\lambda(B)
\qquad	
E\subset\Omega.
\end{equation}
If $S$ is a topological space, $X$ is $\lambda$-tight 
if for all $\varepsilon>0$ there exists $K\subset S$ 
compact such that 
$\lambda^*(X\notin K)<\varepsilon$. 
$X$ is $\lambda$-integrable, $X\in L^1(\lambda)$, 
if there is a sequence $\seqn X$ in $\Sim(\A)$ 
that $\lambda$-converges to $X$ and is Cauchy 
in $L^1(\lambda)$; we then write $\int Xd\lambda$ 
or $\int_\Omega X(\omega)d\lambda(\omega)$. 
We notice that if $A,B\subset\Omega$ and 
$f\in L^1(\lambda)$, then
\begin{equation}
\label{tchebycheff}
\set A\le f\le\set B
\quad\text{implies}\quad
\lambda^*(A)\le\int fd\lambda\le\lambda_*(B).
\end{equation}

The following collections are important:
\begin{subequations}
\begin{equation}
\label{D(X,l)}
D(X,\lambda)
	=
\big\{t>0:\lim_n\lambda_*(X>t-2^{-n})
	=
\lim_n\lambda_*(X>t+2^{-n})\big\},
\end{equation}
\begin{equation}
\label{R0(X,l)}
\Ring_0(X,\lambda)
	=
\big\{\{X>t\}:t\in D(X,\lambda)\big\}
\cup
\big\{\{-X>u\}:u\in D(-X,\lambda)\big\},
\end{equation}
\begin{equation}
\label{A(l)}
\A(\lambda)
	=
\big\{E\subset\Omega:\lambda^*(E)
=
\lambda_*(E)<\infty\big\}.
\end{equation}
\end{subequations}
There is clearly just one extension of $\lambda$ 
to $\A(\lambda)$ and $X$ is $\lambda$-measurable 
if and only if it is measurable with respect to such 
extension, which we shall denote, accordingly, 
again by $\lambda$. A sequence $\seqn X$ in 
$L^1(\lambda)$ converges to $X$ in norm if and 
only if it $\lambda$-converges to $X$ and is Cauchy 
in the norm of $L^1(\lambda)$, \cite[III.3.6]{bible}. 

We shall use the following results on measurability 
and integrability of a positive function.

\begin{lemma}
\label{lemma measurable}
Let $X\ge0$. $X$ is $\lambda$-measurable if and 
only if it is $\lambda$-tight and either 
(i) 
$\infty>\lambda_*(X>s)\ge\lambda^*(X\ge t)$ for 
all $0<s<t$,
(ii)
$\Ring_0(X,\lambda)\subset\A(\lambda)$,
or
(iii) 
the set $\big\{t>0:\{X>t\}\in\A(\lambda)\big\}$ is 
dense in $\R_+$.
\end{lemma}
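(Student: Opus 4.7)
The plan is to establish the cycle measurability $\Rightarrow$ $\lambda$-tight and (i) $\Rightarrow$ (ii) $\Rightarrow$ (iii) $\Rightarrow$ measurability, the last implication using $\lambda$-tightness. Fix $\seqn X$ in $\Sim(\A)$ with $X_n\to X$ in $\lambda$-measure and write $M_n=\norm{X_n}_\infty$. From $\{X>M_n+1\}\subset\{\abs{X_n-X}>1\}$, $\lambda$-tightness is immediate (take $K=[0,M_n+1]$); from $\{X>s\}\subset\{X_n>s/2\}\cup\{\abs{X_n-X}>s/2\}$, the finiteness $\lambda^*(X>s)<\infty$ for every $s>0$ follows because the first piece lies in $\A$ and the second has $\lambda^*$ eventually finite.

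The technical core is to show that the monotone non-increasing functions $\phi(t)=\lambda^*(X>t)$ and $\psi(t)=\lambda_*(X>t)$ coincide on a cocountable subset $T$ of $\R_+$. Given $\epsilon>0$ pick $n$ with $\lambda^*(\abs{X_n-X}>\epsilon)<\epsilon$; the inclusions $\{X>t+\epsilon\}\subset\{X_n>t\}\cup\{\abs{X_n-X}>\epsilon\}$ and $\{X_n>t\}\subset\{X>t-\epsilon\}\cup\{\abs{X_n-X}>\epsilon\}$, combined with the standard bounds $\lambda^*(A\cup B)\le\lambda(A)+\lambda^*(B)$ for $A\in\A$ and $\lambda_*(A)\ge\lambda(D)-\lambda^*(E)$ when $D\in\A$ and $D\subset A\cup E$, deliver
\begin{equation*}
\phi(t+\epsilon)-\psi(t-\epsilon)\le 2\epsilon.
\end{equation*}
Letting $\epsilon\downarrow0$ at a point $t$ of common continuity of the monotone functions $\phi$ and $\psi$ (cocountably many) forces $\phi(t)=\psi(t)<\infty$, i.e. $\{X>t\}\in\A(\lambda)$. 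Density of $T$ yields (i): for $0<s<t$ choose $s'\in T\cap(s,t)$ and chain
\[
\lambda_*(X>s)\ge\lambda_*(X>s')=\lambda^*(X>s')\ge\lambda^*(X\ge t).
\]

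To pass from (i) to (ii), fix $t\in D(X,\lambda)$ and set $L=\lim_n\lambda_*(X>t-2^{-n})=\lim_n\lambda_*(X>t+2^{-n})$. Applying (i) with $s=t-2^{-n}$ gives $\lambda_*(X>t-2^{-n})\ge\lambda^*(X\ge t)\ge\lambda^*(X>t)$, whence in the limit $L\ge\lambda^*(X>t)$; monotonicity from the right yields $\lambda_*(X>t)\ge L$. Sandwiching gives $\lambda^*(X>t)=\lambda_*(X>t)=L<\infty$, so $\{X>t\}\in\A(\lambda)$, and since $X\ge0$ renders the sets $\{-X>u\}$ in $\Ring_0(X,\lambda)$ empty, (ii) follows. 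The implication (ii)$\Rightarrow$(iii) is then immediate, $D(X,\lambda)$ being the continuity set of the monotone $\lambda_*(X>\cdot)$ and hence dense.

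To close the cycle I assume tightness and (iii). Given $\epsilon>0$, tightness provides $M$ with $\lambda^*(X>M)<\epsilon$, and (iii) provides a partition $0=t_0<t_1<\cdots<t_k$ with $t_k>M$, mesh less than $\epsilon$ and $\{X>t_i\}\in\A(\lambda)$ for every $i$. Then the $\A(\lambda)$-simple function $Y=\sum_{i=1}^kt_{i-1}\set{t_{i-1}<X\le t_i}$ satisfies $\lambda^*(\abs{Y-X}>\epsilon)\le\lambda^*(X>t_k)<\epsilon$, exhibiting $X$ as measurable with respect to the extended structure $(\A(\lambda),\lambda)$ and, by the remark preceding the lemma, $\lambda$-measurable. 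I expect the main obstacle to be the careful finitely additive bookkeeping behind the key inequality $\phi(t+\epsilon)-\psi(t-\epsilon)\le 2\epsilon$ and the ensuing cocountability of $T$, on which the entire chain rests.
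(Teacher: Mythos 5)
Your proof is correct and follows essentially the same route as the paper: the cycle measurable $\Rightarrow$ tight $+$ (i) $\Rightarrow$ (ii) $\Rightarrow$ (iii) $\Rightarrow$ measurable, closing with the same partition-based simple function $\sum_i t_{i-1}\set{t_{i-1}<X\le t_i}$ over $\A(\lambda)$. The only variations are cosmetic: you prove tightness directly rather than citing Karandikar, and you reach (i) by first establishing $\lambda^*(X>t)=\lambda_*(X>t)$ on a cocountable set of common continuity points, where the paper gets the inequality $\lambda^*(X\ge s+2\eta)\le\lambda_*(X>s)+\lambda(A_k^\eta)$ in one step from a two-sided inclusion.
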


\begin{proof}
If $X$ is $\lambda$-measurable it is $\lambda$-tight, 
\cite[p. 190]{karandikar jmva}. Choose $\seq Xk$ in 
$\Sim(\A)$ $\lambda$-convergent to $X$, fix
$s,\eta>0$ and $A_k^\eta\in\A$ such that 
$\{\abs{X-X_k}\ge\eta\}\subset A_k^\eta$ and 
$\lambda(A_k^\eta)
\le
\lambda^*(\abs{X-X_k}\ge\eta)+2^{-k}$.
\begin{align*}
\{X\ge s+2\eta\}
	\subset
\{X_k\ge s+\eta\}\cup A_k^\eta
	\subset
\{X>s\}\cup A_k^\eta
\end{align*}
so that
$\lambda^*(X\ge s+2\eta)
	\le
\lambda\big(\{X_k\ge s+\eta\}\cup A_k^\eta\big)
	\le
\lambda_*(X>s)+\lambda\big(A_k^\eta\big)
$
and $\lambda^*(X\ge s+2\eta)<\infty$. 
Assume \tiref{i}. If $t\in D(X,\lambda)$ then 
$\infty
	>
\lambda_*(X>t)
	=
\lim_n\lambda_*(X>t-2^{-n})
	\ge
\lambda^*(X\ge t)
	\ge
\lambda^*(X>t)$. 
\timply{ii}{iii} is obvious. Assuming \tiref{iii}, choose 
$\{0=t_0^n\le t^n_1\le
\ldots
\le t^n_{I_n}\le t^n_{I_n+1}=2^n\}$ 
such that $\{X>t^n_i\}\in\A(\lambda)$ for 
$i=1,\ldots,I_n$ and 
$\sup_{0\le i\le I_n}\abs{t^n_i-t^n_{i+1}}<2^{-n}$. 
Define
\begin{equation}
\label{Xn}
X_n
	=
\sum_{i=1}^{I_n-1}t^n_i\sset{t^n_i<X\le t^n_{i+1}}
	\in
\Sim\big(\A(\lambda)\big).
\end{equation}
Then 
$\{\abs{X-X_n}\ge2^{-n}\}
	\subset
\{ X>2^{n-1}\}$ so that $X_n$ $\lambda$-converges 
to $X$ whenever $X$ is $\lambda$-tight.
\end{proof}

\begin{lemma}
\label{lemma L1}
Let $X\ge0$.
$X\in L^1(\lambda)$ if and only if 
$
\int_0^\infty\lambda_*( X>t)dt
	=
\int_0^\infty\lambda^*( X>t)dt
	<
\infty
$.
Then,
\begin{equation}
\label{bernstein}
\int Xd\lambda
	=
\int_0^\infty\lambda_*(X>t)dt.
\end{equation}
\end{lemma}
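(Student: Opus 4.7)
My plan is to bracket $\int X\,d\lambda$ between Riemann-type approximations obtained from the $\A(\lambda)$-simple functions $X_n$ of \eqref{Xn}. Choose partition points $t_1^n<\cdots<t_{I_n}^n$ from the good set
\[
D
	:=
\{t>0:\{X>t\}\in\A(\lambda)\}
	=
\{t>0:\lambda_*(X>t)=\lambda^*(X>t)<\infty\},
\]
nesting the partitions across $n$ so that $X_n\uparrow$. Writing $F(t):=\lambda(X>t)$ on $D$, a direct computation gives
\[
\int X_n\,d\lambda
	=
\sum_{i=1}^{I_n-1}t_i^n\bigl[F(t_i^n)-F(t_{i+1}^n)\bigr],
\]
which is a left Riemann--Stieltjes sum for $-\int_0^{2^n}t\,dF(t)$. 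Integration by parts, combined with the fact that $tF(t)\to0$ whenever $F$ is non-increasing with finite integral, identifies the limit of these sums as $\int_0^\infty F(t)\,dt$.

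Next I would address the a.e.\ equality of $\lambda_*(X>t)$ and $\lambda^*(X>t)$. Both functions are non-increasing in $t$ with $\lambda_*\le\lambda^*$, so equality on any dense subset of $\R_+$ extends to every common continuity point and hence to a.e.\ $t$. In the direction $(\Rightarrow)$, $X\in L^1(\lambda)$ gives $\lambda$-measurability of $X$ and Lemma \ref{lemma measurable}\tiref{iii} supplies density of $D$. In the direction $(\Leftarrow)$, equality of the two finite integrals forces $\lambda_*=\lambda^*$ a.e.\ Lebesgue; monotonicity together with $\int\lambda^*<\infty$ yields $\lambda^*(X>t)<\infty$ for every $t>0$ and $\lambda^*(X>t)\to 0$ as $t\to\infty$, so that a full-measure---and hence dense---set of $t$ lies in $D$ and $X$ is $\lambda$-tight; Lemma \ref{lemma measurable}\tiref{iii} then returns $\lambda$-measurability.

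It remains to link $\int X\,d\lambda$ with $\lim_n\int X_n\,d\lambda$. In $(\Rightarrow)$, $0\le X_n\le X\in L^1(\lambda)$ and tightness of $X$ make $X_n$ $\lambda$-converge to $X$; the finitely additive dominated convergence theorem then gives $\int X_n\,d\lambda\to\int X\,d\lambda$, and the Riemann--Stieltjes limit completes \eqref{bernstein}. In $(\Leftarrow)$, monotonicity of $\{X_n\}$ combined with $\int X_n\,d\lambda\to\int_0^\infty F(t)\,dt<\infty$ makes $\{X_n\}$ Cauchy in $L^1(\lambda)$ via $\|X_m-X_n\|_{L^1(\lambda)}=\int X_m\,d\lambda-\int X_n\,d\lambda$ for $m\ge n$; tightness of $X$ gives $\{|X-X_n|>\varepsilon\}\subset\{X>2^{n-1}\}$ for large $n$, so that $X_n$ $\lambda$-converges to $X$, and the norm characterization recalled in the preliminaries then places $X$ in $L^1(\lambda)$ with $\int X\,d\lambda=\lim_n\int X_n\,d\lambda$. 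The key technical point is controlling the Riemann--Stieltjes convergence as $[0,2^n]$ expands to $[0,\infty)$, which reduces to the tail estimate $tF(t)\to 0$.
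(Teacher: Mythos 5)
Your argument is correct in outline and shares the paper's skeleton: the simple functions of \eqref{Xn}, Lemma \ref{lemma measurable}\tiref{iii} for measurability, and, in the sufficiency direction, the observation that a monotone sequence with convergent integrals is Cauchy in $L^1(\lambda)$ --- that half is essentially identical to the paper's. The necessity direction, however, is genuinely different. The paper never touches the special functions of \eqref{Xn} there: it takes an arbitrary defining sequence $\seqn X$ in $\Sim(\A)$, sandwiches $\lambda^*(X>t+\varepsilon)$ between $\lambda(X_n>t)$ and $\lambda_*(X>t-\varepsilon)$ up to the error $\lambda^*(\abs{X-X_n}>\varepsilon)$, integrates in $t$ over a fixed window $[a,b]$, and identifies $\int_a^b\lambda(X_n>t)\,dt$ with $\int(b\wedge X_n-a)^+d\lambda$; the truncation is removed only at the very end using $X\in L^1(\lambda)$. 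This keeps everything bounded and yields $\int_a^b\lambda_*=\int_a^b\lambda^*$ and \eqref{bernstein} in one stroke, with no boundary terms. Your route --- a.e.\ equality of $\lambda_*$ and $\lambda^*$ from monotonicity plus density of $\{t>0:\{X>t\}\in\A(\lambda)\}$, and Abel summation of $\int X_n\,d\lambda$ --- is the classical probabilistic proof of the layer-cake formula and is workable, but it incurs exactly the two debts you flag: a finitely additive dominated convergence theorem (available in the Dunford--Schwartz framework via uniform integrability of a dominated sequence, so acceptable) and the boundary terms $t_1^nF(t_1^n)$ and $t_{I_n-1}^nF(t_{I_n}^n)$. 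On the latter, your justification ``$tF(t)\to0$ whenever $F$ is non-increasing with finite integral'' is circular in the necessity direction, where finiteness of $\int_0^\infty F$ is part of the conclusion: you must first extract a bound such as $\int_a^bF\le 2\int X\,d\lambda$ for fixed $0<a<b$ from the Abel identity together with the Chebyshev estimate $tF(t)\le\int X\,d\lambda$, let $a\downarrow0$ and $b\uparrow\infty$, and only then invoke the tail estimate; moreover, at $t\to\infty$ Chebyshev gives only boundedness of $tF(t)$, and to get decay you need $2^{n-1}\lambda(X>2^{n})\le\int\big(X-X\wedge2^{n-1}\big)d\lambda\to0$, i.e.\ the $L^1(\lambda)$-convergence of truncations of an integrable function. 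These are repairable details rather than wrong ideas, but they are precisely the frictions the paper's truncation argument is designed to avoid.
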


\begin{proof}
Assume  
$\int\lambda_*( X>t)dt
	=
\int\lambda^*( X>t)dt<\infty$. Then $X$ is 
$\lambda$-tight and 
$\{t\in\R:\{ X>t\}\in\A(\lambda)\}$ is dense
in $\R_+$ so that $ X$ is $\lambda$-measurable. 
As in \eqref{Xn} we can construct an increasing 
sequence $\seqn X$ in $\Sim(\A(\lambda))$ such 
that $0\le X_n\le X$ and $\lambda$-converges 
to $X$. But then,
\begin{align}
\infty
	>
\int_0^\infty\lambda_*(X>t)dt
	\ge
\lim_n\int_0^\infty\lambda(X_n>t)dt
	=
\lim_n\int X_nd\lambda
	=
\int Xd\lambda
\end{align}
as $\seqn X$ is Cauchy in $L^1(\lambda)$. Assume 
conversely that $X\in L^1(\lambda)$ and take 
$b>a>\varepsilon>0$. If $\seqn X$ in $\Sim(\A)$ 
converges to $X$ in $L^1(\lambda)$, then
\begin{align*}
\int_{a+\varepsilon}^{b+\varepsilon}\lambda^*(X>t)dt
	&\le
\int_\pm^b\lambda(X_n>t)dt
	+
(b-a)\lambda^*(\abs{X-X_n}>\varepsilon)\\
	&\le
\int_{a-\varepsilon}^{b-\varepsilon}\lambda_*(X>t)dt
	+
2(b-a)\lambda^*(\abs{X-X_n}>\varepsilon)
\end{align*}
by \cite[3.2.8.(iii)]{rao}. Thus, 
$\int_\pm^b\lambda_*(X>t)dt
	=
\int_\pm^b\lambda^*(X>t)dt$ 
and
\begin{align*}
\int_\pm^b\lambda^*(X>t)dt
	=
\lim_n\int_\pm^b\lambda(X_n>t)dt
	=
\lim_n\int(b\wedge X_n-a)^+d\lambda
	=
\int(b\wedge X-a)^+d\lambda.
\end{align*}
Thus 
$
\label{inter+}
\int_0^\infty\lambda_*(X>t)dt
	=
\int_0^\infty\lambda^*(X>t)dt
	=
\int Xd\lambda
	<
	\infty
$
and  \eqref{bernstein} holds.
\end{proof}

Proving uniqueness of the set function generating 
a given class of integrals requires to identify a 
minimal element in $\Meas(\Omega)$ associated 
with a given family of functions. This we do by writing, 
for two $(\A,\lambda),(\Bor,\xi)\in\Meas(\Omega)$
\begin{equation}
\label{order}
(\A,\lambda)\preceq(\Bor,\xi)
\quad\text{whenever}\quad
\A\subset\Bor(\xi)
\quad\text{and}\quad
\restr{\xi}{\A}=\lambda.
\end{equation} 

\begin{lemma}
\label{lemma bochner}
Let $\Ha$ be a Stonean, convex cone in 
$\Fun{\Omega}_+$ and $\phi\in\Fun\Ha$. 
The family of those $(\A,\lambda)\in\Meas(\Omega)$
satisfying
\begin{equation}
\label{M(fi)}
\Ha\subset L^1(\lambda)
\quad\text{and}\quad
\int hd\lambda=\phi(h)
\qquad
h\in\Ha,
\end{equation}
is either empty or contains a minimal element, 
$(\Ring_\phi,\lambda_\phi)$. 
\end{lemma}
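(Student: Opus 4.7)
The plan is to extract from any candidate $(\A,\lambda)$ satisfying \eqref{M(fi)} a ring of sets, together with a measure on it, that is already entirely determined by $\phi$---so that, if even one candidate exists, this distinguished structure sits below it in the order $\preceq$. I will assume the family is nonempty (otherwise the statement is vacuous) and fix one of its members $(\A_0,\lambda_0)$. Because $\Ha$ is a Stonean convex cone, $h\wedge t=t(h/t\wedge 1)\in\Ha$ for every $h\in\Ha$ and $t>0$, so the function
\[
F_h(t):=\phi(h\wedge t),\qquad t\ge 0,
\]
is defined purely in terms of $\phi$. Applying \eqref{M(fi)} and Lemma \ref{lemma L1} to the nonnegative differences $h\wedge b-h\wedge a$ will yield
\[
F_h(b)-F_h(a)=\int_a^b\lambda_0^*(h>s)\,ds,\qquad 0\le a\le b,
\]
so $F_h$ is concave and absolutely continuous, its right-derivative $r_h(t):=F_h'(t^+)$ is intrinsic to $\phi$, and on the cocountable set $D^*(h)\subset\R_+$ where $r_h$ is continuous one has $r_h(t)=\lambda_0^*(h>t)$.

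Next I will set $\Ring_\phi^0:=\{\{h>t\}:h\in\Ha,\ t\in D^*(h)\}$, let $\Ring_\phi$ be the ring it generates, and declare $\lambda_\phi$ on $\Ring_\phi^0$ by $\lambda_\phi(\{h>t\}):=r_h(t)$. The same derivation applied to any other $(\A,\lambda)$ in the family gives, via Lemma \ref{lemma measurable}(iii), both $\Ring_\phi^0\subset\A(\lambda)$ and $\lambda(\{h>t\})=r_h(t)$ on these basic sets. Since $\A(\lambda)$ is closed under the ring operations and $\lambda$ is finitely additive there, this agreement automatically extends to $\Ring_\phi\subset\A(\lambda)$ with $\lambda|_{\Ring_\phi}=\lambda_\phi$. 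Hence $\lambda_\phi$ is unambiguously defined and finitely additive on $\Ring_\phi$, and $(\Ring_\phi,\lambda_\phi)\preceq(\A,\lambda)$ for every such candidate---so minimality will be immediate once $(\Ring_\phi,\lambda_\phi)$ is shown to lie in the family.

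For that last check, density of $D^*(h)$ in $\R_+$ combined with Lemma \ref{lemma measurable}(iii) will give $\lambda_\phi$-measurability of each $h\in\Ha$; $\lambda_\phi$-tightness follows by picking $M\in D^*(h)$ large enough that $r_h(M)<\varepsilon$, so that $\lambda_\phi^*(h>M)\le r_h(M)<\varepsilon$ and $K=[0,M]$ is the sought compact. Lemma \ref{lemma L1} will then deliver
\[
\int h\,d\lambda_\phi=\int_0^\infty r_h(s)\,ds=F_h(\infty)=\phi(h),
\]
the last passage coming from $\phi(h)-F_h(t)=\int(h-h\wedge t)\,d\lambda_0=\int_t^\infty\lambda_0^*(h>s)\,ds\to 0$. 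The main obstacle I anticipate is the verification that $\lambda_\phi$, defined only on level sets by the intrinsic quantity $r_h(t)$, extends consistently and finitely additively to the whole ring $\Ring_\phi$; my plan bypasses a direct Daniell-style construction by embedding $\Ring_\phi\subset\A(\lambda)$ for \emph{every} member $(\A,\lambda)$ of the family and simply inheriting the required algebraic properties from the ambient finitely additive measure.
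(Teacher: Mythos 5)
Your overall strategy is the paper's: fix one representing structure, form the ring generated by the level sets $\{h>t\}$ at ``continuity'' levels $t$, show every other representing structure contains that ring and agrees there, and finally check that the restricted structure still satisfies \eqref{M(fi)}. Your intrinsic description of $\lambda(\{h>t\})$ as the right derivative $r_h(t)$ of $t\mapsto\phi(h\wedge t)$ is a correct and rather clean repackaging of the paper's use of the inequality \eqref{bochner}, $\sset{h>a}\ge(h\wedge b-h\wedge a)/(b-a)\ge\sset{h\ge b}$, and the closing verification (tightness from $r_h(M)\to0$, measurability via Lemma \ref{lemma measurable}, the value of the integral via Lemma \ref{lemma L1}) is sound.

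The gap is the sentence asserting that, because $\A(\lambda)$ is a ring on which $\lambda$ is finitely additive, the agreement of all candidates on the basic level sets ``automatically extends'' to the generated ring $\Ring_\phi$. It does not: two finitely additive (even countably additive) measures agreeing on a family of sets need not agree on the ring that family generates unless the family is closed under finite intersections --- on $\Omega=\{1,2,3,4\}$ two measures can agree on $\{1,2\}$ and $\{2,3\}$ yet put different mass on $\{2\}$. Whether all representing measures assign the same value to $\{h_1>t_1\}\cap\{h_2>t_2\}$ is therefore a genuine question, and it is exactly where the paper works hardest: it shows that for a suitably chosen $\delta$ the intersection equals $\{h_\delta>\delta\}$ with $h_\delta=\big(h_1-(t_1-\delta)\big)^+\wedge\big(h_2-(t_2-\delta)\big)^+$ lying in the span of $\Ha$ and with $\delta\in D(h_\delta,\lambda)$, so that the measure of the intersection is again pinned down by $\phi$; this uses the Stonean cone structure of $\Ha$ and a density argument over rational $\eta$ to select $\delta$. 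Without this (or an equivalent) proof that the generating family is stable under intersection, your claim that $\restr{\lambda}{\Ring_\phi}=\lambda_\phi$ for \emph{every} candidate $(\A,\lambda)$ --- which is what minimality requires --- is unsupported, and the argument does not go through as written.
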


\begin{proof}
Assume that $(\A,\lambda)\in\Meas(\Omega)$ 
satisfies \eqref{M(fi)} and denote by $\Ring_\phi$ 
the smallest ring containing
\begin{equation}
\label{R0}
\Ring_{0,\phi}
	=
\big\{\{h>t\}:h\in\Ha,\ t\in D(h,\lambda)\big\}.
\end{equation}
Suppose that $(\Bor,\xi)$ is another such structure. 
Fix $h\in\Ha$ and consider the classical inequality
\begin{align}
\label{bochner}
\sset{h>a}
	\ge
\frac{h\wedge b-h\wedge a}{b-a}
	\ge
\sset{h\ge b}
\qquad
h\in\Ha, b>a>0.
\end{align}
As the inner term belongs to the linear span of 
$\Ha$, $\infty>\lambda_*(h>a)\ge\xi^*(h\ge b)$, 
by \eqref{tchebycheff}. Choosing $a$ and $b$ 
conveniently and interchanging $\lambda$ with 
$\xi$ we establish that $D(h,\lambda)=D(h,\xi)$ 
and that
\begin{align*}
\lambda^*(h\ge t)
=
\xi^*(h\ge t)
=
\xi_*(h>t)
=
\lambda_*(h>t)
\qquad
t\in D(h,\lambda).
\end{align*}
Thus,
$
\Ring_{0,\phi}\subset\Bor(\xi)
$
and $\lambda$ and $\xi$ coincide on $\Ring_{0,\phi}$
and therefore on the collection
\begin{align*}
\mathscr E
=
\big\{E\subset\Omega:
\set E\in\Sim(\Ring_{0,\phi})\big\}.
\end{align*}
To show that $\Ring_{0,\phi}$ is closed with 
respect to intersection, for $i=1,2$ pick $h_i\in\Ha$ 
and $t_i\in D(h_i,\lambda)$. Fix 
$t_1\wedge t_2\ge\eta>0$, define 
$h_\eta
=
\big(h_1-(t_1-\eta)\big)^+
\wedge
\big(h_2-(t_2-\eta)\big)^+$
and observe that
\begin{align*}
h_\eta
=
(h_1+h_2\wedge(t_2-\eta))
\wedge
(h_2+h_1\wedge(t_1-\eta))
-
(h_1\wedge(t_1-\eta)+h_2\wedge(t_2-\eta))
\in
\mathrm{span}(\Ha).
\end{align*}
Since the sets $D(h_\eta,\lambda)$ are dense in 
$\R_+$, choose 
\begin{align*}
\delta
	\in
(0,t_1\wedge t_2]\cap\Q\cap
\bigcap_{\eta\in\Q\cap(0,t_1\wedge t_2]}
D(h_\eta,\lambda).
\end{align*}
Then 
$\delta\in D(h_\delta,\lambda)$, 
$h_\delta\in\Ha$ 
and 
$\{h_1>t_1\}\cap\{h_2>t_2\}=\{h_\delta>\delta\}$. 
But then $\mathscr E$ too is closed with respect to
intersection and this fact together the linear structure 
of $\Sim(\Ring_{0,\phi})$ imply in turn that 
$\mathscr E$ is also closed with respect to set 
difference and, from
$\set{E_1\cup E_2}
	=
\set{E_1}+\set{E_2\backslash E_1}$,
to union as well. In other words, 
$\lambda$ and $\xi$ coincide on the ring 
$\mathscr E$ which contains $\Ring_{0,\phi}$
and {\it a fortiori} on $\Ring_\phi$. Let $h\in\Ha$, 
$t>s>0$ and 
$\lambda_\phi=\restr{\lambda}{\Ring_\phi}$. 
Then, $h$ is $\lambda_\phi$-tight because 
$h\in L^1(\lambda)$ and there are 
$t',s'\in D(h,\lambda)$ with $t>t'>s'>s$ and 
therefore such that
$\lambda_{\phi*}(h>s)
	\ge
\lambda_\phi(h>s')
	\ge
\lambda_\phi(h>t')
	\ge
\lambda_\phi^*(h\ge t)
$.
By Lemma \ref{lemma measurable} $h$ is thus 
$\lambda_\phi$-measurable and therefore 
$\int hd\lambda_\phi=\int hd\lambda$, by
\cite[II.8.1(e)]{bible}.
\end{proof}

Although the minimal structure 
$(\Ring_\phi,\lambda_\phi)$ 
will generally depend on $\phi$, the generated 
$\sigma$ ring corresponds to the usual notion, as 
$D(h,\lambda)$ is dense.

The next result, and its use in Theorem 
\ref{th representation}, provides the best illustration 
of our interest for set functions defined on {\it rings}.

\begin{lemma}
\label{lemma fubini}
Let $g\in\Fun{\Omega}_+$ be $\lambda$-measurable 
and define the ring 
$\Ring_g
=
\big\{A\in\A(\lambda):g\set A\in L^1(\lambda)\big\}$.
There exists a unique $\lambda_g\in fa(\Ring_g)_+$ 
such that
\begin{equation}
\label{lg}
\int f\lambda_g
	=
\int fgd\lambda
\qquad
f\in\B(\lambda),\ fg\in L^1(\lambda).
\end{equation}
\end{lemma}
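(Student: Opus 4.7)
I would set $\lambda_g(A):=\int g\set A\,d\lambda$ for $A\in\Ring_g$; this is well posed by the very definition of $\Ring_g$. Closure of $\Ring_g$ under finite unions and differences follows from the analogous property of $\A(\lambda)$ together with the pointwise estimates $g\set{A\cup B}\le g\set A+g\set B$ and $g\set{A\setminus B}\le g\set A$, both of which produce $\lambda$-integrable majorants, so $\Ring_g$ is a ring. Finite additivity and positivity of $\lambda_g$ are then immediate from linearity and positivity of the $\lambda$-integral, and the identity \eqref{lg} is established on $f\in\Sim(\Ring_g)$ by linearity.

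To extend \eqref{lg} to an arbitrary $f\in\B(\lambda)$ with $fg\in L^1(\lambda)$ I would reduce to $f\ge 0$ via $f=f^+-f^-$, noting $f^\pm g\le\abs{fg}\in L^1(\lambda)$. For such an $f$, bounded and $\lambda$-measurable, Lemma \ref{lemma measurable} supplies a dense set of $t>0$ with $\{f>t\}\in\A(\lambda)$; on each such set the bound $g\set{f>t}\le(fg)/t\in L^1(\lambda)$ shows $\{f>t\}\in\Ring_g$. I then build the step approximation $f_n\in\Sim(\Ring_g)$ exactly as in \eqref{Xn}, taking all thresholds strictly positive so that the support of each $f_n$ lies in some $\{f>t^n_1\}\in\Ring_g$; one verifies $f_n\le f$ and $\abs{f_n-f}<2^{-n}$ pointwise. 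By the simple-function case already proved,
\[
\int\abs{f_n-f_m}\,d\lambda_g=\int\abs{f_n-f_m}g\,d\lambda,
\]
and dominated convergence with majorant $2fg$ shows the right side vanishes, making $\seqn{f}$ Cauchy in $L^1(\lambda_g)$. The uniform estimate $\abs{f_n-f}<2^{-n}$ also gives $\lambda_g$-convergence, so $f\in L^1(\lambda_g)$ and
\[
\int f\,d\lambda_g=\lim_n\int f_n\,d\lambda_g=\lim_n\int f_ng\,d\lambda=\int fg\,d\lambda,
\]
the last equality being one more appeal to dominated convergence. Uniqueness is trivial: for $A\in\Ring_g$ one has $\set A\in\B(\lambda)$ and $\set A g\in L^1(\lambda)$, so any $\mu$ satisfying \eqref{lg} must coincide with $\lambda_g$ on $\Ring_g$.

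The delicate point is the passage from simple to bounded measurable $f$. One must confirm that the approximants lie in $\Sim(\Ring_g)$ and not merely in $\Sim(\A(\lambda))$, which is why the step construction must be launched from $t^n_1>0$ rather than $t^n_0=0$; the elementary inequality $g\set{f>t}\le(fg)/t$ is precisely what keeps the level sets inside $\Ring_g$ and permits the subsequent dominated convergence argument with majorant $2fg$.
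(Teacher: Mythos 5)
Your proof is correct and follows essentially the same route as the paper's: define $\lambda_g(A)=\int g\set A\,d\lambda$, reduce to $f\ge 0$, approximate by the step functions of \eqref{Xn}, and pass to the limit. The extra care you take in checking that the approximants actually lie in $\Sim(\Ring_g)$ (via the bound $g\set{f>t}\le fg/t$) addresses a point the paper leaves implicit; just note that the dominated convergence you invoke must be the finitely additive version, which requires convergence in $\lambda$-measure rather than pointwise --- supplied here by your uniform estimate $\abs{f_n-f_m}\le 2^{-n}+2^{-m}$ together with the $\lambda$-tightness of $g$.
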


\begin{proof}
\eqref{lg} implies 
$\lambda_g(A)
	=
\int\set Agd\lambda$ 
for every $A\in\Ring_g$ and thus uniqueness. In 
proving \eqref{lg} we may assume $f\in\B(\lambda)_+$. 
Let $\seqn f$ be an increasing sequence in 
$\Sim(\A(\lambda))$ such that $0\le f_n\le f$ and 
$f_n$ converges to $f$ uniformly, obtained as 
in \eqref{Xn}. Then $f_n$ is $\lambda$- and 
$\lambda_g$-convergent to $f$. Moreover, $f_n$ 
and $f_ng$ are Cauchy sequence in 
$L^1(\lambda_g)$ and $L^1(\lambda)$.
\end{proof}

\section{Integral Representation of Linear Functionals.}
\label{sec integral}

First we make the notion of conglomerability precise.
\begin{definition}
\label{def conglomerative}
Let $\Ha$ be a vector space. Then 
$\phi\in\Lin{\Ha}$ is said to be conglomerative 
with respect to $T\in\bFun{\Ha,\Fun{\Omega}}$ 
(or $T$-conglomerative) if $\phi(h)<0$ implies 
$\inf_\omega (Th)(\omega)<0$
for all $h\in\Ha$.
\end{definition}

$T$-conglomerative linear functionals form a 
convex cone in $\Lin\Ha$ which is $\Ha$-closed, 
i.e. closed in the topology induced by $\Ha$ on 
$\Lin\Ha$. 
Another key property is the following:

\begin{definition}
\label{def directed}
Let $\Ha$ be a vector space. A map
$T\in\bLin{\Ha,\Fun{\Omega}}$ is said to be
directed if:
\begin{equation}
\label{directed}
\forall h\in\Ha,\ \exists h'\in\Ha
\qtext{such that}
\abs{Th}\le Th'.
\end{equation}
\end{definition}

Proving property \eqref{directed} will be a delicate
step in most of the applications that follow. Two 
easy special cases are:
($\alpha$) 
when $\Ha$ is a vector lattice and $T$ is positive
and 
($\beta$)
when $T[\Ha]\subset\B(\Omega)$ and 
$\sup_h\inf_\omega(Th)(\omega)>0$ -- e.g. if 
$T[\Ha]$ contains the constants. In general, 
there are several important situations in which 
$\Ha$ is an ordered vector space but not a 
lattice. In such general situations a possibility 
is to restrict to the vector space
\begin{equation}
\label{HaT}
\Ha(T)
=
\big\{h\in\Ha:
\abs{Th}\le Th'\text{ for some }h'\in\Ha\big\}
\end{equation}
on which $T$ is directed, by construction.

Most results in this paper follow from the next 
claim. 

\begin{theorem}
\label{th representation}
Let $\Ha$ be a vector space and let 
$T\in\bLin{\Ha,\Fun{\Omega}}$ be directed. 
Write
$
L
	=
\big\{f\in\Fun{\Omega}:\abs f\le Th
\text{ for some }
h\in\Ha\big\}
$.
Then $\phi\in\Lin\Ha$ is $T$-conglomerative if 
and only if there exist
(i)
$F^\perp\in\Lin L_+$ with 
$F^\perp[L\cap\B(\Omega)]=\{0\}$ and
(ii) 
$(\Ring,\mu)\in\Meas(\Omega)$
such that 
\begin{equation}
\label{representation T}
L\subset L^1(\mu)
\qtext{and}
\phi(h)
	=
F^\perp\big(Th\big)+\int Thd\mu
\qquad
h\in\Ha.
\end{equation}
Moreover,
\begin{enumerate}[(a)]
\item
$\norm\mu=1$ 
if and only if
$\inf_\omega(Th)(\omega)\le\phi(h)$
for all $h\in\Ha$,
\item
$\mu$ may be chosen to be countably additive if 
$
\limsup_n\phi(h_n)\le0
$
for all sequences $\sseqn{(h_n,f_n)}$ in $\Ha\times L$
satisfying 
\begin{equation}
\label{ca}
1\ge f_n\downarrow0
\qtext{and}
\limsup_n\ \sup
\big\{\phi(g):g\in\Ha,\ Tg\le\abs{f_n-Th_n}\big\}
\le
0,
\end{equation}
\item
for each $L_0\subset L\cap\B(\Omega)$,
$\mu$ may be chosen to be $L_0$-maximal, 
i.e. maximal as a map on $L_0$.
\end{enumerate}
\end{theorem}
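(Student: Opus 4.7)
The reverse (sufficiency) direction is immediate: positivity of both $F^\perp$ and $\mu$ forces $\phi(h) = F^\perp(Th) + \int Th\,d\mu \ge 0$ whenever $Th \ge 0$ pointwise, which is exactly the contrapositive of $T$-conglomerability. The substantive direction is the forward one. My overall plan is to first produce a positive linear functional $G \in \Lin L$ with $G \circ T = \phi$, and then decompose $G$ into an integral part $I_\mu$ and a residual $F^\perp$ that vanishes on bounded functions.

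For the Hahn--Banach stage, $T$-conglomerability applied to both $h$ and $-h$ forces $\phi$ to vanish on $\ker T$, so $F(Th) := \phi(h)$ unambiguously defines a linear functional on the subspace $T[\Ha] \subset L$. Directedness of $T$ makes the set $\{h \in \Ha : f \le Th\}$ nonempty for every $f \in L$, so the sublinear functional
\[
p(f) := \inf\{\phi(h) : h \in \Ha,\ f \le Th\}
\]
is well-defined on $L$. Sublinearity is routine, and the identity $p(Th) = \phi(h)$ follows because any $h'$ with $Th' \ge Th$ has $\phi(h' - h) \ge 0$ by conglomerability. Hahn--Banach yields a linear $G: L \to \R$ extending $F$ with $G \le p$; since $p(f) \le \phi(0) = 0$ whenever $f \le 0$, $G$ is positive on $L_+$.

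To construct $\mu$, I set $\Ring := \{A \subset \Omega : \set A \in L\}$, which is a ring of sets, and $\mu(A) := G(\set A)$. For $f \in L_+$ dominated by some $Th$, every level set $\{f > t\}$ with $t > 0$ lies in $\Ring$ because $\sset{\{f > t\}} \le f/t \in L$, so Lemma \ref{lemma measurable}\iref{iii} yields $\mu$-measurability and an approximation from below by step functions $f_n \in \Sim(\Ring)$ with $G(f_n) = \int f_n\,d\mu$. Combining $G(f_n) \le G(f)$ with Lemma \ref{lemma L1} gives $L \subset L^1(\mu)$ and $\int f\,d\mu \le G(f)$, so $F^\perp(f) := G(f) - \int f\,d\mu$ is a well-defined positive linear functional on $L$. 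The delicate point, and the main obstacle, is showing $F^\perp \equiv 0$ on $V := L \cap \B(\Omega)$. For $f \in V_+$ bounded by $M$, my approximants satisfy $|f - f_n| \le M/n$, but promoting this uniform bound to $G(f - f_n) \to 0$ cannot be done from a generic positive $G$; I plan to exploit that $V$ is a Stonean vector lattice and refine the definition of $\mu$ on each $\{f > t\}$ via the Bochner-type cutoff $(n(f-t))^+ \wedge 1 \in V$, setting $\mu(\{f > t\}) := \sup_n G\bigl((n(f-t))^+ \wedge 1\bigr)$ in the spirit of Lemma \ref{lemma bochner}. A layer-cake computation then gives $\int f\,d\mu = G(f)$ for $f \in V_+$, whence $F^\perp|_V = 0$.

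For parts \iref{a}--\iref{c}: \iref{a} follows because requiring $\|\mu\| = 1$ amounts to adjoining the constraint that $G$ be dominated also by $\sup$ in the Hahn--Banach step, and this tightening is compatible with $F$ iff $\phi(h) \ge \inf_\omega(Th)(\omega)$ for all $h$; \iref{b} is handled by replacing $p$ with a sharper sublinear functional enforcing sequential continuity, with the hypothesis \eqref{ca} being exactly what makes this tighter dominant compatible with $F$, so that the resulting $\mu$ is $\sigma$-additive; \iref{c} follows from Zorn's lemma applied to the family of admissible pairs $(\Ring,\mu)$ partially ordered by pointwise comparison of $\int\cdot\,d\mu$ on $L_0$, using Step 2 to verify that chains admit upper bounds.
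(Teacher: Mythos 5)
Your sufficiency argument, your Hahn--Banach stage (which is an explicit form of the Kantorovich extension on majorizing subspaces that the paper imports from Aliprantis--Burkinshaw), and the derivation of $L\subset L^1(\mu)$ with $\int f\,d\mu\le G(f)$ are all sound. The gap is exactly where you locate it, but your proposed repair does not close it. The obstruction is not one of refining $\mu$ on level sets: the restriction of a positive extension $G$ to $V=L\cap\B(\Omega)$ need not be representable by \emph{any} $(\Ring,\mu)\in\Meas(\Omega)$, because $V$ is a Stonean lattice of bounded functions which in general contains no order unit. Concretely, take $\Omega=\N$, let $\Ha=L=V=\{f\in\Fun\N:\sup_kk\abs{f(k)}<\infty\}$, $T$ the identity, and $G(f)=\LIM_kkf(k)$ for a Banach limit $\LIM$. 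Then $G$ is positive and $G(\sset{k=j})=0$ for every $j$, so by \eqref{tchebycheff} any representing $\mu$ has $\mu^*(E)=0$ for every finite $E$; but $g(k)=1/k$ has every level set $\{g>t\}$, $t>0$, finite, so Lemma \ref{lemma L1} forces $\int g\,d\mu=\int_0^\infty\mu_*(g>t)\,dt=0$, whereas $G(g)=1$. Your refined definition $\mu(\{f>t\})=\sup_nG\bigl((n(f-t))^+\wedge1\bigr)$ also returns $0$ on these level sets, so the layer-cake identity $\int f\,d\mu=G(f)$ on $V_+$ fails, and with it $F^\perp[V]=\{0\}$.

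Note also that in this example $T$ is the identity and $\phi=G$, so $G$ is the \emph{only} positive extension available; the failure therefore cannot be cured by a cleverer choice of $G$ within your scheme, and it signals that the difficulty is intrinsic at this level of generality. The paper's own proof takes a genuinely different route at precisely this point: it renormalizes by the dominating functions, mapping $L_\alpha$ into $H_\alpha=I_\alpha[L_\alpha]\subset\B(\Omega_\alpha)$, a lattice \emph{containing the constants}, where a Riesz-type representation by a bounded finitely additive measure is available, and then transfers back through the density $\set{\Omega_\alpha}/Th_\alpha$ via Lemma \ref{lemma fubini} before pasting over $\alpha$. That transfer (an unbounded change of density) is where the whole weight of the argument sits, and the same example puts pressure on it; so simply grafting the paper's normalization onto your construction will not by itself rescue your proof. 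As written, the central claim $F^\perp[L\cap\B(\Omega)]=\{0\}$ is not established, and your sketches for (a)--(c), while plausible in outline, all rest on it.
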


\begin{proof}
$T[\Ha]$ is a majorizing subspace of the vector lattice 
$L$, by \eqref{directed}. If $\phi$ is 
$T$-conglomerative
\begin{equation}
\label{F}
F\big( T h\big)
	=
\phi(h)
\qquad
h\in\Ha
\end{equation}
implicitly defines a positive linear functional $F$ on 
$T[\Ha]$. By \cite[theorem 1.32]{aliprantis}, $F$ 
extends as a positive linear functional (still denoted 
by $F$) to the whole of $L$. For each 
$\alpha\subset\Ha$ 
finite, let $h_\alpha\in\Ha$
be such that 
$Th_\alpha\ge\bigvee_{h\in\alpha}\abs{Th}$,
 $\Omega_\alpha=\{Th_\alpha\ne0\}$ 
 and define 
$I_\alpha\in\bFun{L,\Fun{\Omega_\alpha}}$ 
by letting 
\begin{align*}
I_\alpha(f)(\omega)
=
\frac{f(\omega)}{T h_\alpha(\omega)}
\qquad
f\in L,\ \omega\in\Omega_\alpha.
\end{align*}
Let also
\begin{equation}
L_\alpha
	=
\{f\in L:\abs f\le c\ Th_\alpha\text{ for some }c>0\}
\quad\text{and}\quad
H_\alpha=I_\alpha[L_\alpha].
\end{equation}
$H_\alpha$ is a sublattice of $\B(\Omega_\alpha)$ 
containing the constants; $f,g\in L_\alpha$ and 
$I_\alpha(f)\ge I_\alpha(g)$ 
imply $f\ge g$. Thus, upon writing
\begin{equation}
U_\alpha\big(I_\alpha(f)\big)=F(f)
\qquad
f\in L_\alpha
\end{equation}
we obtain yet another positive, linear functional 
$U_\alpha$ on $H_\alpha$. \cite[Theorem 1]{JMAA} 
implies 
\begin{equation}
U_\alpha\big(I_\alpha(f)\big)
	=
\int I_\alpha(f)d\bar m_\alpha
\qquad
f\in L_\alpha
\end{equation}
for some 
$\bar m_\alpha\in ba(\Omega_\alpha)_+$. 
Let 
$m_\alpha(A)
	=
\bar m_\alpha(A\cap\Omega_\alpha)$ 
for each $A\subset\Omega$. By Lemma 
\ref{lemma fubini}, we can write (with the 
convention $0/0=0$)
\begin{equation}
\label{mua}
F(f)
	=
\int\frac{f}{Th_\alpha}\set{\Omega_\alpha}dm_\alpha
	=
\int fd\bar\mu_\alpha
\qquad
f\in L_\alpha\cap\B(\Omega)
\end{equation}
with $\bar\mu_\alpha=m_{\alpha,g}$ defined as in 
\eqref{lg} with $g=\set{\Omega_\alpha}/Th_\alpha$. 
Since $L_\alpha\cap\B(\Omega)$ is a Stonean 
lattice, we deduce from Lemma \ref{lemma bochner} 
the existence of a minimal 
$(\Ring_\alpha,\mu_\alpha)\in\Meas(\Omega)$ 
supporting the 
representation \eqref{mua}. Define 
$\Ring=\bigcup_\alpha\Ring_\alpha$ and
$
\mu(A)
	=
\lim_\alpha\mu_\alpha(A)
$
for all $A\in\Ring$. $\alpha\subset\alpha'$ implies 
$L_\alpha\subset L_{\alpha'}$,
$(\Ring_\alpha,\mu_\alpha)
\preceq
(\Ring_{\alpha'},\mu_{\alpha'})$
as well as the martingale restriction
\begin{equation}
\label{martingale}
\mu_\alpha
	=
\restr{\mu_{\alpha'}}{\Ring_\alpha}
	=
\restr{\mu}{\Ring_\alpha}
\qquad
\alpha\subset\alpha'.
\end{equation}
But then for each $f\in L_\alpha$ with $f\ge0$,
\begin{equation}
\label{F split}
\begin{split}
F(f)
	&=
\lim_kF(f\wedge k)+\lim_kF\big((f-k)^+\big)\\
	&=
\lim_k\int(f\wedge k)d\mu+F^\perp(f)\\
	&=
\int fd\mu+F^\perp(f)
\end{split}
\end{equation}where we have set 
$F^\perp(f)=\lim_kF\big((f-k)^+\big)$ 
and the inequality 
$\mu^*(f>k)
\le 
k^{-1}\int f\wedge kd\mu
\le 
k^{-1}F(f)$
induces the conclusion that $f\wedge k$ is 
$\mu$-convergent to $f$ and is Cauchy in 
$L^1(\mu)$. $\int\abs{f}d\mu\le F(\abs{f})$ 
follows from \eqref{F split} and implies 
$L\subset L^1(\mu)$. \eqref{representation T} 
is a consequence of \eqref{F} and \eqref{F split}. 
Necessity is obvious as the right hand side of 
\eqref{representation T} defines a positive 
linear functional on $L$.

\tiref{a}.	
Suppose that $\phi(h)<a<\inf_\omega Th(\omega)$ 
for some $a\in\R$ and $h\in\Ha$. Then, by 
\eqref{representation T} and properties of 
$F^\perp$, $a>\int Thd\mu\ge a\norm\mu$ 
which is contradictory if $\norm\mu=1$.
Conversely, define $\hat\phi\in\Lin{\R\times\Ha}$ 
and $\hat T\in\bLin{\R\times\Ha,\Fun\Omega}$ 
implicitly by letting
\begin{equation}
\hat\phi(r,h)=r+\phi(h)
\qtext{and}
\hat T(r,h)=r+T(h)
\qquad
(r,h)\in\R\times\Ha.
\end{equation}
By assumption $\hat\phi$ is $\hat T$-conglomerative
and thus admits a pair $\hat F^\perp$ and $\hat\mu$ 
as above. Therefore
\begin{equation}
r+\phi(h)
=
\hat F^\perp(r+Th)+\int(r+Th)d\hat\mu
=
\hat F^\perp(Th)+\int(r+Th)d\hat\mu
\qquad
(r,h)\in\R\times\Ha.
\end{equation}
Letting $h=\0$ we deduce $\norm{\hat\mu}=1$ 
and, from this, 
$\phi(h)
	=
\hat F^\perp(Th)+\int Thd\hat\mu$ 
for every $h\in\Ha$.

\tiref{b}. 
Fix a sequence $\seqn f$ as in 
\eqref{ca}. By \cite[theorem 1.33]{aliprantis} 
the extension of $F$ from $T[\Ha]$ to $L$  
constructed above may be chosen such 
that $\inf_{h\in\Ha}F\big(\abs{f-Th}\big)=0$ 
for every $f\in L$. Let $F^\perp$ and $\mu$
be the corresponding components of $F$
according to \eqref{representation T}. Thus, 
for each $n\in\N$, let $h_n\in\Ha$ be such 
that $F(\abs{f_n-Th_n})\le2^{-n}$. If $g\in\Ha$ 
and $Tg\le\abs{f_n-Th_n}$, then
\begin{align*}
\phi(g)
=
F(Tg)
\le
F(\abs{f_n-Th_n})
\le
2^{-n}.
\end{align*}
Thus $\sseqn{(h_n,f_n)}$ satisfies \eqref{ca} and,
by assumption, $\limsup_n\phi(h_n)\le0$. The 
inequality
$
\int f_nd\mu
	=
F(f_n)
	\le
\phi(h_n)+F(\abs{f_n-Th_n})
$
then proves that the functional $f\to\int fd\mu$ 
is a Daniel integral on the Stonean lattice 
$L\cap\B(\Omega)$ and it may thus be 
represented by some countably additive 
$(\Ring,\hat\mu)\in\Meas(\Omega)$. To prove 
that $\hat\mu$ agrees with $\mu$ over the whole 
of $L$ it is enough to remark that when $f\in L$ 
and $f\ge0$, then
$\hat\mu^*(f>k)
\le
k^{-1}\int f\wedge kd\hat\mu
=
k^{-1}\int f\wedge kd\mu
\le
k^{-1}F(f)$
and therefore $f\wedge k$ converges to $f$ in 
$L^1(\hat\mu)$.

\tiref{c}. 
For each $\alpha$ in a directed set $\mathfrak A$, 
let $F_\alpha\in\Lin{L}_+$ be such that 
$F_\alpha(Th)=\phi(h)$ for each $h\in\Ha$. Given
that $F_\alpha$ is conglomerative with respect to
the identity on $L$, it is of the form
\begin{equation}
F_\alpha(f)
	=
F_\alpha^\perp(f)+\int fd\mu_\alpha
\qquad
f\in L
\end{equation}
with $F^\perp_\alpha[L\cap\B(\Omega)]=\{0\}$ and
$(\Ring_\alpha,\mu_\alpha)\in\Meas(\Omega)$ such 
that $L\subset L^1(\mu_\alpha)$.
Observe that if $f\in L$ then there exists $h\in\Ha$
such that $\abs f\le Th$ and thus such that
$F_\alpha(\abs f)\le\phi(h)$. The net $\neta F$
admits then a subnet (still indexed by $\alpha$
for convenience) such that 
\begin{equation*}
F(f)
	=
\lim_\alpha F_\alpha(f)
\qquad
f\in L.
\end{equation*}
Since $F$ is positive we write it as
$F(f)=F^\perp(f)+\int fd\mu$. 
If the net $\neta\mu$ is increasing on 
$L_0\subset L\cap\B(\Omega)$ then
\begin{align*}
\lim_\alpha\int fd\mu_\alpha
	=
\lim_\alpha F_\alpha(f)
	=
F(f)
	=
\int fd\mu
\qquad
f\in\ L_0.
\end{align*}
It is clear that 
$\int fd\mu
	\ge
\int fd\mu_\alpha$ 
for each $\alpha\in\mathfrak A$ and $f\in L_0$.
By Zorn lemma this proves the existence of a
representing measure $\mu$ which is $L_0$-%
maximal.
\end{proof}

Before moving to applications we can generalize 
Theorem \ref{th representation} by dropping the 
assumption of linearity.

\begin{corollary}
\label{cor add}
Let $\Ha$ be a non empty set, let 
$T\in\bFun{\Ha,\Fun\Omega}$ 
be directed and denote by
$
L
$
the ideal generated by $T[\Ha]$. Then
$\phi\in\Fun{\Ha}$ is $T$-conglomerative
in the sense that
\begin{equation}
\label{conglo add}
\sum_{n=1}^Na_n\phi(h_n)<0
\qtext{implies}
\inf_\omega\sum_{n=1}^N
a_n(Th_n)(\omega)<0
\qquad
h_1,\ldots,h_N\in\Ha,
a_1,\ldots,a_N\in\R
\end{equation}
if and only if there exist 
(i)
$(\Ring,\mu)\in\Meas(\Omega)$ and 
(ii)
$F^\perp\in\Lin{L}_+$ 
such that
$F^\perp[L\cap\B(\Omega)]=0$,
\begin{equation}
L\subset L^1(\mu)
\qtext{and}
\phi(h)
=
F^\perp(Th)+\int Thd\mu
\qquad
h\in\Ha.
\end{equation}
Moreover, $\mu$ is a probability if and only if
\begin{equation}
\sum_{n=1}^Na_n\phi(h_n)
	\ge
\inf_\omega\sum_{n=1}^N
a_n(Th_n)(\omega)
\qquad h_1,\ldots,h_N\in\Ha,\ 
a_1\ldots,a_N\in\R.
\end{equation}
\end{corollary}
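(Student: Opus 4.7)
The plan is to linearize and then invoke Theorem \ref{th representation}. Let $V$ denote the free real vector space on the set $\Ha$, so elements of $V$ are finite formal combinations $v=\sum_{n=1}^N a_n h_n$ with $h_n\in\Ha$ and $a_n\in\R$. Extend $T$ and $\phi$ linearly to
\begin{equation*}
\tilde T\Big(\sum_{n=1}^N a_n h_n\Big)=\sum_{n=1}^N a_n Th_n
\qtext{and}
\tilde\phi\Big(\sum_{n=1}^N a_n h_n\Big)=\sum_{n=1}^N a_n \phi(h_n).
\end{equation*}
Observe that condition \eqref{conglo add} is, word for word, the statement that $\tilde\phi\in\Lin V$ is $\tilde T$-conglomerative in the sense of Definition \ref{def conglomerative}.

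Next I would check that $\tilde T$ is directed. Given $v=\sum a_n h_n\in V$, the directedness of $T$ provides $h_n'\in\Ha$ with $\abs{Th_n}\le Th_n'$; setting $v'=\sum\abs{a_n}h_n'\in V$ one obtains $\abs{\tilde T v}\le\sum\abs{a_n}\abs{Th_n}\le\tilde T v'$. Moreover, the ideal in $\Fun\Omega$ generated by $T[\Ha]$, namely the set $L$ appearing in the statement, coincides with $\{f\in\Fun\Omega:\abs f\le \tilde T v\text{ for some }v\in V\}$: any dominating finite sum $\sum c_i\abs{Th_i}$ with $c_i\ge 0$ is bounded by $\sum c_i Th_i'\in\tilde T[V]$ by the same directedness argument. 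Thus $L$ matches the set playing the role of $L$ for the pair $(\tilde T,V)$ in Theorem \ref{th representation}.

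Applying that theorem yields $F^\perp\in\Lin L_+$ vanishing on $L\cap\B(\Omega)$ and $(\Ring,\mu)\in\Meas(\Omega)$ with $L\subset L^1(\mu)$ such that
\begin{equation*}
\tilde\phi(v)=F^\perp(\tilde Tv)+\int \tilde Tv\,d\mu
\qquad v\in V.
\end{equation*}
Restricting to the singleton combination $v=h$ for $h\in\Ha$ produces the claimed representation. Conversely, if such $F^\perp$ and $\mu$ exist, then $h\mapsto F^\perp(Th)+\int Th\,d\mu$ is $T$-conglomerative because its value is strictly negative only when either a positive linear functional on $L$ (which is automatically $T$-conglomerative in the elementary sense \eqref{conglo add}) takes a negative value or the integrand has negative infimum.

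For the probability clause, apply part \tiref{a} of Theorem \ref{th representation}: $\mu$ is a probability (equivalently $\norm\mu=1$, noting that $\mu$ is automatically positive from the construction in the proof of the theorem) if and only if $\inf_\omega(\tilde Tv)(\omega)\le\tilde\phi(v)$ for all $v\in V$, which upon substitution of $v=\sum a_nh_n$ is precisely the stated inequality. No step presents a genuine obstacle; the only delicate point is verifying that the ideal generated by $T[\Ha]$ matches the set $L$ used by Theorem \ref{th representation} once $\Ha$ is replaced by $V$, which, as noted, follows directly from directedness.
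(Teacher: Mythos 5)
Your proof is correct and follows essentially the same route as the paper: the paper linearizes by passing to the span of the evaluations $e_h$ inside $\Fun{\Fun\Ha}$, which is precisely a concrete model of the free vector space on $\Ha$ that you use, and then invokes Theorem \ref{th representation} exactly as you do. The only difference is that you spell out the verification that $\tilde T$ is directed and that the ideal generated by $T[\Ha]$ coincides with the theorem's $L$, details the paper leaves implicit.
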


\begin{proof}
Let $e_h$ be the evaluation on $\Fun\Ha$
corresponding to $h\in\Ha$, that is 
$e_h(G)=G(h)$. If $V$ is a linear 
space, then each $G\in\Fun{\Ha,V}$  may be 
associated with a map $\hat G$ from the span
of $\{e_h:h\in\Ha\}$ into $V$ by letting
\begin{equation}
\label{hat}
\hat G\big(a_1e_{h_1}+\ldots+a_Ne_{h_N}\big)
	=
\sum_{n=1}^Na_nG(h_n)
\qquad
h_1,\ldots,h_N\in\Ha,\ 
a_1,\ldots,a_N\in\R.
\end{equation}
It is immediate that $\hat G$ is well defined 
and linear. Letting $\hat\phi$ and $\hat T$ be 
defined via \eqref{hat}, then \eqref{conglo add} 
is equivalent to the statement that $\hat\phi$ 
is $\hat T$-conglomerative while $\hat T$ is 
directed if and only if so is $T$. The claim 
follows from Theorem \ref{th representation}.
\end{proof}

A special case of Corollary \ref{cor add} applies
to the case in which $\neta\Ha$ is a family of
sets and, for each $\alpha\in\mathfrak A$,
$\phi_\alpha\in\Fun{\Ha_\alpha}$ and 
$T_\alpha\in\Fun{\Ha_\alpha,\Fun\Omega}$. 
Just let
$\Ha
	=
\{(h,\alpha):u\in\Ha_\alpha,\alpha\in\mathfrak A\}$,
$\phi(h,\alpha)=\phi_\alpha(h)$ and
$T(h,\alpha)=T_\alpha(h)$.

As pointed out by Choquet \cite[p. 325]{choquet 
positive I}, not all linear functionals admit an 
integral representation, not even finitely additive. 
This occurs, e.g., when $\Ha$ consists of 
polynomials and $\phi$ associates to each 
$h\in\Ha$ the coefficient of its term of degree 
$n$, for some fixed $n\ge1$. With the aim of 
extending the classical 
Riesz-Markoff theorem, Choquet assumes that 
$\Omega$ is a compact topological space, $\Ha$ 
a positively generated linear space of extended 
real-valued, continuous functions on $\Omega$ 
and takes $T$ to be a quotient $T(h)=h/g$. This 
construction permits to characterize positive linear 
functionals on $\Ha$ as a summable family of 
submeasures \cite[theorem 42]{choquet positive II}.

Theorem \ref{th representation} bears a closer 
relation to another result of Choquet, the extremal 
representation theorem, that was originally proved 
in \cite{choquet} and later variously extended and 
reformulated (see, \cite{choquet83}, 
\cite{lukes et al} or \cite{phelps} for an overview 
of this literature). To see this connection clearly, 
fix 
$\Omega=\Psi\subset\Lin{\Ha}$ 
and define 
$T\in\Lin{\Ha,\Fun\Psi}$ 
by letting $Th(\psi)=\psi(h)$ i.e. as the map that
associates each $h\in\Ha$ with the (restriction 
to $\Psi$ of the) corresponding evaluation $e_h$ 
on $\Fun\Ha$. It is then easily seen that 
conglomerability may be nicely restated in 
geometric terms as the condition 
\begin{equation}
\label{Psi-conglomerative}
\phi\in\ccon[\Ha](\Psi),
\end{equation}
i.e. as $\phi$ being an element of the closed, conical 
hull of $\Psi$, the closure being in the $\Ha$ topology. 
Likewise, the inequality 
$\inf_\omega Th(\omega)\le\phi(h)$
for all $h\in\Ha$ is equivalent to the condition
$\phi\in\cco[\Ha](\Psi)$.

In the light of these remarks the following result 
becomes obvious.

\begin{corollary}
\label{cor choquet}
Let $\Ha$ be a vector lattice and 
$\Psi\subset\Lin{\Ha}_+$. Then,
$\phi\in\ccon[\Ha](\Psi)$ 
if and only if there exist 
(i)
$\phi^\perp\in\Lin{\Ha}$ with $\phi^\perp(h)\ge0$ 
when $\inf_\psi\psi(h)>-\infty$ and 
(ii)
$(\Ring,\mu)\in\Meas(\Psi)$
such that
\begin{equation}
\label{choquet}
\restr{e_h}{\Psi}\in L^1(\mu)
\qtext{and}
\phi(h)
=
\phi^\perp(h)+\int_\Psi\psi(h)d\mu
\qquad
h\in\Ha.
\end{equation}
Moreover, $\mu$ is a probability 
if and only if $\phi\in\cco[\Ha](\Psi)$.
\end{corollary}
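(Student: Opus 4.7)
The plan is to invoke Theorem \ref{th representation} with $\Omega=\Psi$ and $T\in\bLin{\Ha,\Fun\Psi}$ defined by $(Th)(\psi)=\psi(h)$, i.e.\ $T$ sends $h$ to the restriction to $\Psi$ of the evaluation $e_h$. I first verify that $T$ is directed: since $\Psi$ consists of positive functionals on the vector lattice $\Ha$, one has $\abs{(Th)(\psi)}=\abs{\psi(h)}\le\psi(\abs h)=(T\abs h)(\psi)$, so $h'=\abs h$ serves in Definition \ref{def directed}. By the discussion preceding the corollary, $T$-conglomerability of $\phi$ is precisely the condition $\phi\in\ccon[\Ha](\Psi)$.

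The main step is to recast the residual $F^\perp$ produced by Theorem \ref{th representation} as a linear functional $\phi^\perp$ on $\Ha$ with the asserted one-sided positivity. I set $\phi^\perp(h)=F^\perp(Th)$, which is linear since both $T$ and $F^\perp$ are. For the sign property, assume $\inf_\psi\psi(h)\ge-c$ for some $c\ge0$. Then $(Th)^-\le c$ and also $(Th)^-\le T\abs h$, so $(Th)^-$ lies in $L\cap\B(\Psi)$ and is therefore annihilated by $F^\perp$; since $(Th)^+\le T\abs h\in L$ as well, positivity of $F^\perp$ on $L$ yields
\begin{equation*}
\phi^\perp(h)
	=
F^\perp\big((Th)^+\big)-F^\perp\big((Th)^-\big)
	=
F^\perp\big((Th)^+\big)
	\ge
0.
\end{equation*}
Substituting into \eqref{representation T} and using $\int_\Psi(Th)(\psi)d\mu(\psi)=\int_\Psi\psi(h)d\mu$ produces \eqref{choquet}.

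For the converse, suppose $\phi$ admits the representation \eqref{choquet}. If $\phi(h)<0$ while $\psi(h)\ge0$ for every $\psi\in\Psi$, then $\inf_\psi\psi(h)\ge0>-\infty$ forces $\phi^\perp(h)\ge0$ and the integral term is also nonnegative, contradicting $\phi(h)<0$. Hence $\phi$ is $T$-conglomerative and thus $\phi\in\ccon[\Ha](\Psi)$. Finally, the identity $\inf_\omega(Th)(\omega)=\inf_\psi\psi(h)$, combined with Theorem \ref{th representation}\iref{a} and the geometric characterization of $\cco[\Ha](\Psi)$ recalled before the statement of the corollary, delivers the equivalence $\norm\mu=1\iff\phi\in\cco[\Ha](\Psi)$.
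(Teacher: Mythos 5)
Your proposal is correct and follows exactly the route the paper intends: it instantiates Theorem \ref{th representation} with $\Omega=\Psi$ and $T$ the evaluation map, uses the lattice structure of $\Ha$ (via $h'=\abs h$) for directedness, and translates conglomerability and the inequality $\inf_\psi\psi(h)\le\phi(h)$ into the geometric conditions $\phi\in\ccon[\Ha](\Psi)$ and $\phi\in\cco[\Ha](\Psi)$ as in the discussion preceding the corollary. The paper merely declares the result ``obvious'' from those remarks, so your explicit verification that $(Th)^-\in L\cap\B(\Psi)$ is annihilated by $F^\perp$ (yielding the one-sided positivity of $\phi^\perp$) and your direct check of the converse are welcome elaborations of the same argument rather than a different proof.
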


The lattice structure of $\Ha$ guarantees that the 
map $T$ defined above is directed, as in ($\alpha$).

To compare this result with the classical extremal 
or barycentrical representation, we remark that 
the conical structure and the choice of the $\Ha$ 
topology make the conglomerability condition
\eqref{Psi-conglomerative} a very weak 
restriction not requiring compactness nor 
boundedness and not relying as a consequence 
on the existence of extreme points. The first to 
obtain a proof of Choquet theorem without 
assuming compactness was Edgar 
\cite[theorem p. 355]{edgar} 
who considered a bounded, closed, convex, 
separable subset of a Banach space possessing 
the Radon Nikodym property and constructed
his proof exploiting norm convergence of vector 
valued martingales. 

Another version of Choquet theorem is obtained
starting from condition ($\beta$) for directedness 
of $T$ and requires boundedness.

\begin{corollary}
\label{cor choquet V}
Let $\Ha\subset\Fun{S}$ be a vector subspace,
$\phi\in\Lin\Ha$ and let $V\subset S$ be 
$\Ha$-bounded, i.e. such that
$
\sup_{v\in V}\abs{h(v)}<\infty
$
for all $h\in\Ha$.
Then,
\begin{equation}
\label{major}
\phi(h)
\ge
\inf_{v\in V}h(v)
\qquad
h\in\Ha
\end{equation}
if and only if there exists a probability structure 
$(\Ring,\mu)$ on $V$ such that
\begin{equation}
\label{choquet V}
h\in L^1(\mu)
\qtext{and}
\phi(h)
=
\int_Vh(v)d\mu
\qquad
h\in\Ha.
\end{equation}
\end{corollary}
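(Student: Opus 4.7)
The plan is to cast the corollary into the framework of Theorem \ref{th representation} with $\Omega = V$ and $T\in\bLin{\Ha,\Fun V}$ defined by $(Th)(v)=h(v)$, and then to read off the conclusion from clause \tiref{a} of that theorem. Since $V$ is $\Ha$-bounded, $T[\Ha]\subset\B(V)$, which puts us in the regime of case ($\beta$) for directedness. The inequality \eqref{major} immediately yields $T$-conglomerability: if $\phi(h)<0$ then $\inf_{v\in V}h(v)\le\phi(h)<0$. Moreover \eqref{major} says exactly that $\inf_\omega(Th)(\omega)\le\phi(h)$ for every $h\in\Ha$, which is the hypothesis of \tiref{a}.

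The first obstacle is that $T$ need not actually be directed: $\Ha$ itself may contain no function bounded away from $0$ on $V$, so case ($\beta$) does not apply directly. I would sidestep this by adjoining the constants. Put $\Ha'=\R\oplus\Ha$ and define
\begin{equation*}
T'(r,h)(v)=r+h(v),\qquad \phi'(r,h)=r+\phi(h),\qquad (r,h)\in\Ha',\ v\in V.
\end{equation*}
Then $T'[\Ha']\subset\B(V)$ (because each $h\in\Ha$ is bounded on $V$) and $T'(1,0)\equiv 1$, so $\sup_{h'\in\Ha'}\inf_{v\in V}(T'h')(v)\ge1>0$; hence $T'$ is directed by ($\beta$). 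A straightforward check using \eqref{major} shows that
\begin{equation*}
\inf_{v\in V}(T'(r,h))(v)=r+\inf_{v\in V}h(v)\le r+\phi(h)=\phi'(r,h),
\end{equation*}
so $\phi'$ is $T'$-conglomerative and satisfies the majorization hypothesis of Theorem \ref{th representation}\tiref{a}.

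Applying that theorem, I obtain $F^\perp\in\Lin{L}_+$ vanishing on $L\cap\B(V)$ and $(\Ring,\mu)\in\Meas(V)$ with $\|\mu\|=1$ and
\begin{equation*}
\phi'(r,h)=F^\perp\big(T'(r,h)\big)+\int T'(r,h)\,d\mu,\qquad (r,h)\in\Ha'.
\end{equation*}
Since each $T'(r,h)$ is bounded on $V$, it lies in $L\cap\B(V)$, so the $F^\perp$ term vanishes. Setting $r=0$ and using $\mu(V)=1$ then gives $\phi(h)=\int_V h\,d\mu$ and $h\in L^1(\mu)$, as required. The converse is immediate: if $\phi(h)=\int h\,d\mu$ for a probability $\mu$ supported on $V$, then $\phi(h)\ge\int\inf_{v\in V}h(v)\,d\mu=\inf_{v\in V}h(v)$.
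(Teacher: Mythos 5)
Your proposal is correct and follows essentially the same route as the paper: the paper likewise adjoins the constants (working with $\Ha\times\R$ acting via $(h,r)(s)=h(s)+r$), invokes case ($\beta$) for directedness, applies Theorem \ref{th representation}\tiref{a} to get $\norm\mu=1$, kills $F^\perp$ because $T[\Ha\times\R]\subset\B(V)$, and restricts to $(h,0)$. Your only addition is spelling out explicitly why plain $T$ on $\Ha$ alone might fail to be directed, which the paper leaves implicit.
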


\begin{proof}
Consider the vector space $\Ha\times\R$ as acting 
on $S$ via $(h,r)(s)=h(s)+r$. In the notation of 
Theorem \ref{th representation}, let $\Omega=V$, 
$T(h,r)=\restr{(h,r)}V$ and $\hat\phi(h,r)=\phi(h)+r$. 
By ($\beta$), $T$ is directed as $T[\Ha\times\R]$ 
is a subset of $\B(\Omega)$ containing the 
constants. \eqref{major} is equivalent to 
$\hat\phi(h,r)
\ge
\inf_v(Th)(v)
$,
i.e. to the representation of $\hat\phi$ in the form 
\eqref{representation T} form some probability 
structure $(\Ring,\mu)$ and with $F^\perp=0$
as $\hat T[\Ha\times\R]\subset\B(V)$.
\eqref{choquet V} follows upon restricting to
elements of the form $(h,0)$. The converse
implication is obvious.
\end{proof}

A clear example in which \eqref{major} holds is 
the one in which $\Ha$ consists of affine functions 
and $\phi(h)=h(u)$ for some $u\in\cco[\Ha](V)$. 
We highlight that Corollary \ref{cor choquet V} 
does not require topological assumptions; as a
drawback, the characterization of the mapping 
$u\to\mu_u$ is rather difficult. In the case in 
which $\Ha$ is a Stonean sublattice, however, the 
minimality property is enough to imply that $u\in V$ 
if and only if $\mu_u$ is the point mass measure 
at $u$.

\section{Finitely Additive Companions.}
\label{sec representations}

In this section we return to the problem of the 
existence of companions.

\begin{theorem}
\label{th companion}
Let $(\A,m)\in\Meas(\Omega)$,
$X\in\Fun{\Omega,S}$ and $\Ha$ a Stonean 
vector sublattice of $\Fun{S}$. Let 
$X'\in\Fun{\Omega',S}$. There is equivalence 
between the condition
\begin{equation}
\label{conglo X}
\int h(X)dm<0
\quad\text{implies}\quad
\inf_{\omega'\in\Omega'} h\big(X'(\omega')\big)
<
0
\qquad
h\in\Ha,\ 
h(X)\in L^1(m)
\end{equation}
and the existence of a minimal 
$(\Ring,\mu)\in\Meas(\Omega')$ satisfying
\begin{equation}
\label{companion} 
h(X')\in L^1(\mu)
\quad\text{and}\quad
\int h(X)dm
	=
\int h(X')d\mu
\qquad
h\in\Ha,\ 
h(X)\in L^1(m).
\end{equation}
In addition, 
\begin{enumerate}[(a)]
\item
$\mu$ is a probability if and only if
\begin{equation}
\label{conglo X prob}
\int h(X)dm
	\ge
\inf_{\omega'\in\Omega'} h\big(X'(\omega')\big)
\qquad
h\in\Ha,\ 
h(X)\in L^1(m);
\end{equation}
\item
if $X'[\Omega']$ is closed in the topological 
space $S$ and $\Ha\subset\Cts{S}$ then $\mu$ 
is countably additive if either
(i)
$\Ha\subset\Cts[K]{S}$,
(ii)
$X'$ is $\mu$-tight
or
(iii)
$X$ is $m$-tight and
$
m_*\big(X\notin X'[\Omega']\big)=0
$.
\end{enumerate}
\end{theorem}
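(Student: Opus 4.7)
First I set $\Ha_m=\{h\in\Ha:h(X)\in L^1(m)\}$, which is a vector sublattice of $\Ha$ because $L^1(m)$ is a Banach lattice, and define $\phi\in\Lin{\Ha_m}$ by $\phi(h)=\int h(X)dm$ together with the linear map $T\in\bLin{\Ha_m,\Fun{\Omega'}}$ given by $Th=h\circ X'$. Since $T$ is positive on a vector lattice, case ($\alpha$) of the directedness discussion preceding \eqref{HaT} applies and $T$ is directed. The hypothesis \eqref{conglo X} is precisely the $T$-conglomerability of $\phi$ in the sense of Definition \ref{def conglomerative}, and the converse implication \eqref{companion}$\To$\eqref{conglo X} is immediate: the right hand side of \eqref{companion} is a positive linear functional of $h(X')$, hence nonnegative whenever $\inf_{\omega'}h(X'(\omega'))\ge0$.

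Theorem \ref{th representation} then supplies $F^\perp\in\Lin{L}_+$ with $F^\perp[L\cap\B(\Omega')]=\{0\}$ and a structure $(\Ring,\mu)\in\Meas(\Omega')$ such that $L\subset L^1(\mu)$ and $\phi(h)=F^\perp\big(h(X')\big)+\int h(X')d\mu$ for every $h\in\Ha_m$. The main obstacle is to eliminate the $F^\perp$ term from \eqref{companion}. This is automatic for bounded $h\in\Ha_m$, for which $h(X')\in L\cap\B(\Omega')$. For a general $h\in\Ha_m$ I use the truncation $h_k=(h\wedge k)\vee(-k)$, which belongs to $\Ha$ by the Stonean and lattice properties (since $h\wedge k=k\cdot((h/k)\wedge1)\in\Ha$ and $g\vee(-k)=-((-g)\wedge k)\in\Ha$ whenever $g\in\Ha$), and to $\Ha_m$ because $\abs{h_k(X)}\le\abs{h(X)}$. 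The bounded identity $\int h_k(X)dm=\int h_k(X')d\mu$ then passes to the limit $k\to\infty$ by dominated convergence: on the left because $h(X)\in L^1(m)$, and on the right because $h(X')=Th\in L\subset L^1(\mu)$ dominates $\abs{h_k(X')}$. This yields \eqref{companion}. Minimality of $(\Ring,\mu)$ is then obtained by invoking Lemma \ref{lemma bochner} on the Stonean convex cone $\{h(X'):h\in\Ha_m,\ h\ge0\}\subset\Fun{\Omega'}_+$ and the functional $h(X')\mapsto\int h(X)dm$, which is well defined by the identity just established.

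Item (a) follows directly from Theorem \ref{th representation}(a), noting $\inf_{\omega'}(Th)(\omega')=\inf_{\omega'}h(X'(\omega'))$. For item (b) the plan is to verify the Daniell-type hypothesis \eqref{ca} of Theorem \ref{th representation}(b) in each sub-case. In (i), a sequence $h_n\downarrow0$ in $\Ha\subset\Cts[K]{S}$ has common compact support $\supp(h_1)$, so Dini's theorem yields $h_n\to0$ uniformly on $S$, hence $h_n(X')\to0$ uniformly on $\Omega'$, trivially forcing \eqref{ca}. The harder cases are (ii) and (iii): in (ii) one exploits $\mu$-tightness of $X'$ to localise the relevant integrals on $\Omega'$ to the pullback of a compact subset of $S$ on which the argument from (i) applies; in (iii) the $m$-tightness of $X$, combined with the coverage condition $m_*(X\notin X'[\Omega'])=0$ and the closedness of $X'[\Omega']$, transfers tightness from $X$ to $X'$ under $\mu$ through the already-established identity $\int h(X)dm=\int h(X')d\mu$, reducing matters to the compactly supported situation of (i).
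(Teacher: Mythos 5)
Your treatment of the equivalence and of part (a) is essentially the paper's own argument: the paper likewise kills the singular term by noting that $\phi(h)=\lim_k\phi(h\wedge k)$ for $h\in\Ha_+$ with $h(X)\in L^1(m)$, which is exactly your truncation device, and minimality comes from Lemma \ref{lemma bochner}. The only caveat is that ``dominated convergence'' must be read in the finitely additive sense: what one actually shows is that $h(X)\wedge k$ and $h(X')\wedge k$ are Cauchy in $L^1(m)$ and $L^1(\mu)$ respectively, via the Chebyshev-type estimate used in the proof of Theorem \ref{th representation}. Up to that rephrasing, this half is correct and coincides with the paper's route.

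Part (b), however, is not proved. You argue only case (i), and even there the appeal to Dini is misplaced: a sequence with $h_n(X')\downarrow0$ is decreasing only on $X'[\Omega']$, so neither $h_n\downarrow0$ on $S$ nor $\supp h_n\subset\supp h_1$ is available; the compact set on which Dini must be applied is $\supp(h_1)\cap X'[\Omega']$ (respectively $K'\cap X'[\Omega']$ in case (ii)), and this is precisely where the closedness of $X'[\Omega']$ enters. Cases (ii) and (iii) are left as declarations of intent. For (iii) in particular, the stated plan --- transferring tightness from $X$ to $X'$ under $\mu$ --- is not obviously feasible (it would require Urysohn-type functions lying in $\Ha$ and integrable under $m$); the paper instead pulls the computation back to $\Omega$: since $m_*(X\notin X'[\Omega'])=0$ one can extend $m$ to a set function $\bar m$ annihilating $F=\{X\notin X'[\Omega']\}$, write $\int h_n(X')d\mu=\int h_n(X)d\bar m\le\int h_n(X)\set{B\setminus F}d\bar m+\varepsilon$ with $B\subset\{X\in K\}$ and $K$ compact, and then bound $\sup_{\omega\in B\setminus F}h_n(X(\omega))$ by $\sup_{s\in X'[\Omega']\cap K}h_n(s)\to0$ via Dini. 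Finally, the paper does not route (b) through condition \eqref{ca}: it shows directly that $h(X')\mapsto\int h(X')d\mu$ is a Daniell integral on the Stonean lattice $\{h(X'):h\in\Ha,\ h(X)\in L^1(m)\}$ and then invokes minimality to conclude that $\mu$ itself, and not merely some other representing measure, is countably additive --- a point your \eqref{ca}-based plan would still need to address.
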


\begin{proof}
\eqref{conglo X} is equivalent to $\phi$ 
being $T$-conglomerative with 
$\phi(h)=\int h(X)dm$ 
and $Th=h(X')$ for every $h\in\Ha$. Thus, 
\eqref{companion} follows from 
\eqref{representation T} after noting that, 
in the present setting, 
$\phi(h)=\lim_k\phi(h\wedge k)$ for every 
$h\in\Ha_+$. That \eqref{conglo X prob} is
necessary and sufficient for $\mu$ to be a
probability follows directly from Theorem 
\ref{th representation}.\tiref{a}.

Let $X'[\Omega']$ be closed and $\seqn h$ a 
sequence in $\Ha\subset\Cts S$ with $h_n(X')$ 
decreasing to $0$, i.e. $h_n$ decreasing to $0$ 
on $X'[\Omega']$. We claim that \tiref{i}, \tiref{ii} 
or \tiref{iii} imply $\lim_n\int h_n(X')d\mu=0$. If 
$\Ha\subset\Cts[K]S$, then in computing such 
limit one may replace $S$ with some compact 
subset so that \tiref{i} follows from \tiref{ii}. Fix 
$\varepsilon>0$. Under \tiref{ii} there exists 
$K'\subset S$ compact and $B'^c\in\Ring$ such 
that $B'\subset\{X'\in K'\}$ and
\begin{align*}
\int h_n(X')d\mu
\le
\int h_n(X')\set{B'}d\mu+\varepsilon
\qquad
n\in\N.
\end{align*}
But then 
$\lim_n\sup_{\omega'\in B'}h_n(X')
\le
\lim_n\sup_{s\in X'[\Omega']\cap K'}h_n(s)
=
0$, 
by Dini's theorem. Under \tiref{iii}, we can find
an extension $\bar m$ of $m$ to the minimal
ring containing the set $F=\{X\notin X'[\Omega']\}$ 
such that $\bar m(F)=0$. We can also find
$K\subset S$ compact and $B^c\in\A$ such
that $B\subset\{X\in K\}$ and that
\begin{align*}
\int h_n(X')d\mu
=
\int h_n(X)dm
=
\int h_n(X)d\bar m
\le
\int h_n(X)\set{B\setminus F}d\bar m
+
\varepsilon
\qquad
n\in\N
\end{align*}
so that again 
$
\lim_n\sup_{\omega\in B\setminus F}h_n(X)
\le
\lim_n\sup_{s\in X'[\Omega']\cap K}h_n(s)
=
0
$.
In either case the positive linear functional 
$\int h(X')d\mu$ on the Stonean lattice 
$\Ha[X']$ is a Daniell integral and it may be 
represented via a countably additive set 
function. Since $\mu$ is minimal, it must then 
be countably additive.
\end{proof}

To clarify the connection with Doob's work,
consider a $\pi$-strategy, i.e. a function
$\sigma(h\vert B)$ where $h$ runs across the
family $\B(\Omega)$ of bounded functions 
on $\Omega$ and $B$ is an element of the 
partition $\pi$ of $\Omega$. As in other papers
on finitely additive probability (see e.g. Regazzini 
\cite{regazzini}) conditional expectation is 
defined setwise rather than as a measurable
function, as in Kolmogorov classical construction.
One notices that $m$ is $\sigma$-conglomerative 
in the sense of \cite[p. 90]{dubins}  if and only if 
\eqref{conglo X} holds with $\Ha=\B(\Omega)$,
$S=\Omega$, $X$ the identity map and 
$h(X')
=
\sum_{B\in\pi}\sigma(h\vert B)\set B$.

In the absence of restrictions on $\mu$, the 
existence of companions is guaranteed under 
a weak condition such as \eqref{conglo X}, 
namely if $X$ is $X'$-conglomerative. An 
obvious companion to any $X$ is the identity 
map on $\Omega'=S$. Given that being companion 
(relatively to the one given family $\Ha$) is a 
transitive property, the problem in Theorem 
\ref{th companion} may be simplified with no 
loss of generality by assuming that $X$ is the 
identity map on $\Omega=S$. In this case, if 
$m$ consists of sample frequencies, then the 
condition $m^*(X'[\Omega'])=0$ sufficient for 
$m$ to be $X'$-conglomerative means that all 
the observations in the given sample must belong 
to the range of $X'$. 

The existence of a countably additive companion 
was proved under \tiref{ii} by Dubins and Savage 
\cite[p. 190]{dubins savage}, for the case 
$\Omega=\Omega'=S=\R$, and has then been 
revived and extended to the case $S=\R^n$ by 
Karandikar, \cite{karandikar} and 
\cite{karandikar jmva}, who used it in the proof 
of finitely additive limit theorems. The conditions 
for the existence of a countably additive companion 
obtained in Theorem \ref{th companion} may be 
employed to refine the results of the preceding 
section. In particular if the set $\Psi$ in Corollary 
\ref{cor choquet} is $\Ha$-compact then in 
\eqref{choquet} one has $\phi^\perp=0$ and 
$\mu$ can be chosen to be countably additive.

An interesting issue concerns the construction of 
an auxiliary state space on which every function
$X$ admits a countably additive companion. 

\begin{lemma}
\label{lemma cc}
Let $(\A,m)\in\Meas(\Omega)$,
$S$ be a metric space, $s_0\in S$, 
$X\in\Fun{\Omega,S}$ and 
$\tilde\Omega=\Fun{\N,\Omega}$. 
Define $\tilde X\in\Fun{\tilde\Omega}$
as
\begin{equation}
\label{tilde X}
\tilde X(\tilde\omega)
	=
\lim_kX(\omega_k)
\qtext{if the limit exists or else}
\tilde X(\tilde\omega)=s_0,
\qquad
\tilde\omega=\seq\omega k\in\tilde\Omega.
\end{equation}
There exists $(\Ring,\mu)\in\Meas(\tilde\Omega)$  
countably additive and such that $(\tilde X,\mu)$ 
is companion to $(X,m)$ relatively to $\Cts[K]S$. 
Moreover, if $S=\Fun\N$ and $X_n$ is $m$-convergent 
(resp. converges in $L^1(m)$)  to $0$ then 
$\tilde X_n$ is $\mu$-convergent (resp. 
converges in $L^1(\mu)$) to $0$.
\end{lemma}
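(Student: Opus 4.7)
The plan is to split the two assertions: I would apply Theorem~\ref{th companion}(b)(i) to obtain the countably additive companion, and then transfer the componentwise convergence of $X_n$ to $\tilde X_n$ via the pointwise-limit structure of $\tilde X$ together with the countable additivity of $\mu$. For the first half I take $\Omega'=\tilde\Omega$, $X'=\tilde X$ and $\Ha=\Cts[K]{S}$. Conglomerability \eqref{conglo X} is immediate from the constant-sequence embedding $j\colon\omega\mapsto(\omega,\omega,\ldots)$, which satisfies $\tilde X\circ j=X$: since $m\ge 0$, $\int h(X)\,dm<0$ forces $h(X(\omega))<0$ for some $\omega\in\Omega$, whence $h(\tilde X(j(\omega)))<0$. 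One also verifies that $\tilde X[\tilde\Omega]=\overline{X[\Omega]}\cup\{s_0\}$, which is closed in $S$: each $\tilde X(\tilde\omega)$ is either $s_0$ or the limit of some sequence in $X[\Omega]$, and conversely every limit point of $X[\Omega]$ is attained by running a suitable $(\omega_k)$. The hypotheses of Theorem~\ref{th companion}(b)(i) are therefore met and a countably additive companion $\mu$ is produced.

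For the convergence claim take $s_0=0$, so that $\tilde X_n(\tilde\omega)=\lim_k X_n(\omega_k)$ on the set $C$ of pointwise-convergent sequences and $\tilde X_n\equiv 0$ on $C^c$. For any $c>0$ and $\delta\in(0,c)$ this produces the containment
\begin{equation*}
\{|\tilde X_n|>c\}\ \subset\ \liminf_{k}\mathrm{ev}_k^{-1}\bigl(\{|X_n|>c-\delta\}\bigr),
\end{equation*}
where $\mathrm{ev}_k(\tilde\omega)=\omega_k$. Countable additivity of $\mu$ and Fatou for sets then deliver $\mu^*(|\tilde X_n|>c)\le\liminf_k\mu^*\bigl(\mathrm{ev}_k^{-1}\{|X_n|>c-\delta\}\bigr)$, and once the marginal bound $\mu^*\circ\mathrm{ev}_k^{-1}\le m^*$ on $\A$ is available, the right-hand side is dominated by $m^*(|X_n|>c-\delta)$, which tends to $0$ by hypothesis. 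The $L^1$ part follows by the same device applied to the truncations $|\tilde X_n|\wedge M$, $|X_n|\wedge M$ and passing to $M\to\infty$ via monotone convergence for the countably additive $\mu$.

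The main obstacle is the marginal bound, which is not supplied by the $\Cts[K]{S}$ companion alone: in the product topology on $\Fun\N$ the class $\Cts[K]{S}$ collapses to $\{0\}$, so the relation $\int h(X)\,dm=\int h(\tilde X)\,d\mu$ is vacuous. To secure the marginal bound I would apply Theorem~\ref{th companion} to the enriched maps $X_*(\omega)=(X(\omega),X(\omega),X(\omega),\ldots)$ and $\tilde X_*(\tilde\omega)=(\tilde X(\tilde\omega),X(\omega_1),X(\omega_2),\ldots)$ from $\Omega,\tilde\Omega$ into $S\times S^\N$, tested against the Stonean lattice obtained by composing $\Cts[K]{S}$ with each coordinate projection of $S\times S^\N$. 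The resulting companion simultaneously reproduces the original one from the first factor and forces $\mu\circ\mathrm{ev}_k^{-1}=m$ on $\A$ from the $k$th factor; conglomerability and the closedness of $\tilde X_*[\tilde\Omega]$ on this enriched setup are verified by the same constant-sequence device.
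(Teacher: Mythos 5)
Your handling of the first claim is exactly the paper's: conglomerability from the constant--sequence embedding, closedness of $\tilde X[\tilde\Omega]$, and an appeal to Theorem \ref{th companion}(b)(i). That part is fine.

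For the second claim you take a genuinely different route, and it has concrete gaps. (1) The ``Fatou for sets'' step is not available for the set function $\mu^*(E)=\inf\{\mu(A):A\in\Ring,\ E\subset A\}$ that the paper works with: this outer set function is not continuous from below even when $\mu$ is countably additive on the ring $\Ring$ (Lebesgue measure on the ring of finite unions of intervals, with $A_k$ the first $k$ rationals, gives $\liminf_k\mu^*(A_k)=0$ while $\mu^*(\bigcup_kA_k)=1$), so $\mu^*(\liminf_kA_k)\le\liminf_k\mu^*(A_k)$ fails in general; you would have to pass to the Carath\'eodory extension on the generated $\sigma$-ring and redo the estimate there. (2) The enriched companion does not ``force $\mu\circ\mathrm{ev}_k^{-1}=m$ on $\A$'': a companion relative to a class of test functions only equates integrals of $h(X\circ\mathrm{ev}_k)$ with integrals of $h(X)$, i.e.\ it constrains push-forward distributions on $S$, not the marginal of $\mu$ under $\mathrm{ev}_k$ as a set function on $\A$ (if $X$ is constant it says nothing about $\mu\circ\mathrm{ev}_k^{-1}$ beyond total mass). (3) Most importantly, the test class you build on $S\times S^{\N}$ --- $\Cts[K]{S}$ composed with the coordinate projections --- is exactly as degenerate as the class you set out to avoid, since each projection is $S$-valued and, as you yourself observe, $\Cts[K]{S}=\{0\}$ when $S=\Fun{\N}$ carries the product topology. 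Your degeneracy remark is a fair criticism of the assertion ``$h_n^k\in\Cts[K]{S}$'' in the paper's own proof, but the cure is to descend to the real coordinates (compact support in the variable $x_n$, i.e.\ $\Cts[K]{\R}$ composed with $x\mapsto x_n$), not to enlarge the state space, and countable additivity of the enriched companion would anyway not follow from Theorem \ref{th companion}(b)(i) since such compositions are not compactly supported on the product.

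The paper's own argument is far more direct and bypasses both the marginal bound and any Fatou argument. It tests the companion identity against functions of the single coordinate $x_n$: with $g,f_k\in\Cts{\R}$ such that $\sset{x>b}<g(x)\le\sset{x>a}$ and $f_k\uparrow1$, $f_k$ compactly supported, it establishes the chain
\begin{align*}
m^*(\abs{X_n}>a)
\ \ge\ \int g(\abs{X_n})\,d\bar m
\ &\ge\ \lim_k\int g(\abs{X_n})f_k(\abs{X_n})\,d\bar m
\ =\ \lim_k\int g(\abs{\tilde X_n})f_k(\abs{\tilde X_n})\,d\mu\\
\ &=\ \int g(\abs{\tilde X_n})\,d\mu
\ \ge\ \mu^*(\abs{\tilde X_n}>b),
\end{align*}
where the companion property gives the first equality and countable additivity of $\mu$ (monotone convergence in $k$) gives the second. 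This yields $\mu^*(\abs{\tilde X_n}>b)\le m^*(\abs{X_n}>a)$ for all $b>a>0$ with no reference to $\mathrm{ev}_k$, and the $L^1$ statement follows by integrating this tail bound in $t$ and invoking Lemma \ref{lemma L1} rather than by truncation. I would redo your second half along these lines.
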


\begin{proof}
$X$ is $\tilde X$-conglomerative relatively to 
any $\Ha\subset\Fun S$ since
$X[\Omega]\subset\tilde X[\tilde\Omega]$;
moreover, $\tilde X[\tilde\Omega]$ is closed. 
The first claim follows from Theorem 
\ref{th companion}.\tiref{b}.

Let $S=\Fun\N$ and replace $m$ with some 
positive extension $\bar m$ to the ring 
$
\{A\subset\Omega:m^*(A)<\infty\}
$.
By the first claim there exists 
$(\Ring,\mu)\in\Meas(\tilde\Omega)$
countably additive such that $(X,\bar m)$ 
and $(\tilde X,\mu)$ 
are companions relatively to $\Cts[K]{S}$ 
-- and {\it a fortiori} so are $(X,m)$ and
$(\tilde X,\mu)$. Fix $b>a>0$ and $k>0$
and let $g,f_k\in\Cts\R$ be such that
$\sset{x>b}<g(x)\le\sset{x>a}$
and $\sset{x<k-1}<f_k(x)<\sset{x<k}$
so that $f_k\uparrow1$. Writing
$h_n(X)=g(\abs{X_n})$ and 
$h_n^k(X)=h_n(X)f_k(\abs{X_n})$,
$h_n\in\Cts{S}$ and 
$h_n^k\in\Cts[K]{S}$. But then,
\begin{align*}
m^*(\abs{X_n}>a)
	\ge
\int h_n(X)d\bar m
	\ge
\lim_k\int h_n^k(X)d\bar m
	=
\lim_k\int h_n^k(\tilde X)d\mu
	=
\int h_n(\tilde X)d\mu
	\ge
\mu^*(\abs{\tilde X_n}>b)
\end{align*}
and, consequently,
\begin{align*}
\int_\pm^\infty m^*(\abs{X_n}>t)dt
	\ge
\int_b^\infty \mu^*(\abs{\tilde X_n}>t)dt
	\ge
\int_b^\infty \mu_*(\abs{\tilde X_n}>t)dt
\end{align*}
so that
$\int\abs{X_n}dm\ge\int\abs{\tilde X_n}d\mu$
whenever $X_n\in L^1(m)$, by Lemma 
\ref{lemma L1}. 
\end{proof}

Lemma \ref{lemma cc} may help understanding 
the connection between convergence pointwise 
and in measure under finite additivity, i.e. when 
Egoroff theorem fails. We establish that a condition 
weaker than uniform  convergence may be assumed.

\begin{corollary}
\label{cor pointwise}
Let $(\Omega,\A,m)$ be a probability space 
and $\seqn X$ a $m$-measurable sequence 
in $\Fun\Omega$. Assume that 
\begin{align}
\label{limlim}
\lim_n\lim_kX_n(\omega_k)
=
0,
\end{align}
whenever $\lim_kX_n(\omega_k)$ exists for 
all $n\in\N$. Then, $X_n$ $m$-converges 
to $0$.
\end{corollary}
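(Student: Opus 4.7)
The plan is to exploit Lemma \ref{lemma cc} by observing that hypothesis \eqref{limlim} is exactly pointwise convergence of the companion $\tilde X_n$ to $0$ on the entire auxiliary space $\tilde\Omega$, and then to read the integral chain used in that lemma backwards so as to push an estimate on $\mu$ back to an estimate on $m$.

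First I would apply Lemma \ref{lemma cc} with $S=\Fun{\N}$ in its product metric, viewing the whole sequence as a single map $X:\Omega\to S$ with $X(\omega)=\sseqn{X_n(\omega)}$, and with $s_0$ taken to be the zero sequence. This produces a countably additive $(\Ring,\mu)\in\Meas(\tilde\Omega)$ under which $(\tilde X,\mu)$ is companion to $(X,m)$ relatively to $\Cts[K]{S}$; because $X[\Omega]\subset\tilde X[\tilde\Omega]$ and $m$ is a probability, Theorem \ref{th companion}.\tiref{a} lets us arrange $\mu$ to be a probability as well. Since convergence in $\Fun\N$ is pointwise, \eqref{tilde X} yields $\tilde X_n(\tilde\omega)=\lim_kX_n(\omega_k)$ on the set where all these limits exist and $\tilde X_n(\tilde\omega)=0$ elsewhere, so hypothesis \eqref{limlim} delivers $\lim_n\tilde X_n(\tilde\omega)=0$ for every $\tilde\omega\in\tilde\Omega$.

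Next I would show $\mu_*(\abs{\tilde X_n}>a)\to0$ for every $a>0$ by contradiction: if along a subsequence one had $\mu_*(\abs{\tilde X_{n_j}}>a)\ge\delta$, I would pick $B_j\in\Ring$ with $B_j\subset\{\abs{\tilde X_{n_j}}>a\}$ and $\mu(B_j)\ge\delta/2$, extend $\mu$ to $\sigma(\Ring)$ by countable additivity, and invoke the reverse Fatou lemma to get $\mu(\limsup_jB_j)\ge\delta/2>0$; any $\tilde\omega$ in that set would lie in $B_{j_\ell}$ for infinitely many $\ell$, contradicting $\tilde X_n(\tilde\omega)\to0$.

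Finally I would run the integral chain of Lemma \ref{lemma cc} in the opposite sense. For $b>a>0$ and a continuous cutoff $g:\R\to[0,1]$ with $\sset{x>b}\le g(x)\le\sset{x>a}$, multiplying $g(\abs{s_n})$ by a compactly supported $f_k(\abs{s_n})\uparrow1$ places $g(\abs{s_n})f_k(\abs{s_n})$ in $\Cts[K]{S}$; the companion identity and monotone convergence on both sides give $\int g(\abs{X_n})dm=\int g(\abs{\tilde X_n})d\mu$, and squeezing by \eqref{tchebycheff} with $\sset{\abs{X_n}>b}\le g(\abs{X_n})$ and $g(\abs{\tilde X_n})\le\sset{\abs{\tilde X_n}>a}$ produces
\begin{equation*}
m^*(\abs{X_n}>b)
\le
\int g(\abs{X_n})dm
=
\int g(\abs{\tilde X_n})d\mu
\le
\mu_*(\abs{\tilde X_n}>a).
\end{equation*}
Combined with the previous step, $m^*(\abs{X_n}>b)\to0$ for every $b>0$, which is exactly the $m$-convergence to $0$. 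I expect the main obstacle to be justifying that $\mu$ can be taken to be a probability in this specific application of Lemma \ref{lemma cc}, together with the monotone passage in the $m$-integral required to transfer the $\Cts[K]{S}$ companion identity to the continuous but non-compactly supported function $g(\abs{\cdot_n})$; without these technical steps the $\mu_*$-to-$m^*$ bridge collapses.
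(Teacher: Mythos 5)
Your overall route is the paper's: pass to $\tilde\Omega=\Fun{\N,\Omega}$ via Lemma \ref{lemma cc}, observe that \eqref{limlim} forces $\tilde X_n\to 0$ at \emph{every} point of $\tilde\Omega$, use countable additivity of $\mu$ to turn this into decay of $\mu$, and transfer back to $m$ through the companion identity. The only substantive difference is that the paper first truncates, working with $Y=\sseqn{\abs{X_n}\wedge1}$; this confines both $Y$ and $\tilde Y$ to the compact cube $[0,1]^\N$, so the coordinate functionals agree there with elements of $\Cts[K]{\Fun\N}$ and the identity $\int Y_ndm=\int\tilde Y_nd\mu$ is immediate, after which bounded convergence under the countably additive $\mu$ gives $\int Y_ndm\to0$ and hence $m$-convergence. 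Your reverse-Fatou argument for $\mu_*(\abs{\tilde X_n}>a)\to0$ and the normalization of $\mu$ via Theorem \ref{th companion}.\tiref{a} are both fine.

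The one step that does not stand as written is the middle equality $\int g(\abs{X_n})dm=\int g(\abs{\tilde X_n})d\mu$, which you obtain by \quot{monotone convergence on both sides}. Monotone convergence is available on the $\mu$ side because $\mu$ is countably additive, but it fails in general for the finitely additive $m$: from $h_n^k(X)=g(\abs{X_n})f_k(\abs{X_n})\uparrow g(\abs{X_n})$ one only gets $\lim_k\int h_n^k(X)dm\le\int g(\abs{X_n})dm$, and the inequality can be strict, since mass of $m$ sitting \quot{at infinity} in $\abs{X_n}$ is invisible to every $f_k$. This is precisely why the corresponding chain in the proof of Lemma \ref{lemma cc} is written with a one-sided inequality at that spot and only delivers the implication from $m$-convergence to $\mu$-convergence --- the direction opposite to the one you need here. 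Your step is nonetheless true under the corollary's hypotheses, but for a different reason: each $X_n$ is assumed $m$-measurable, hence $m$-tight by Lemma \ref{lemma measurable}, so that $\abs{h_n^k(X)-g(\abs{X_n})}\le\sset{\abs{X_n}\ge k-1}$ yields $\int\abs{h_n^k(X)-g(\abs{X_n})}dm\le m^*(\abs{X_n}\ge k-1)\to0$ as $k\to\infty$. Replace the appeal to monotone convergence by this tightness estimate (or simply truncate to $\abs{X_n}\wedge1$ at the outset, as the paper does, which makes the issue disappear) and your proof goes through.
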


\begin{proof}
Write $Y=\sseqn{\abs{X_n}\wedge1}$ and 
define $\tilde\Omega$ and $\tilde Y$ as in 
\eqref{tilde X}, with $s_0=0$. By Lemma 
\ref{lemma cc} there exists a countably
additive $(\Ring,\mu)\in\Meas(\tilde\Omega)$ 
such that $(Y,m)$ and $(\tilde Y,\mu)$ are 
companions relatively to $\Cts[K]{\Fun\N}$. 
Fix $\tilde\omega=\seq\omega k$ in 
$\tilde\Omega$. If $Y$ does not converge 
along $\tilde\omega$ then 
$Y_n(\tilde\omega)=0$, otherwise
$
\lim_n\tilde Y_n(\tilde\omega)
=
\lim_n\lim_k Y_n(\omega_k)
=
0
$,
by \eqref{limlim}. But then countable
additivity implies
$
0
	=
\lim_n\int \tilde Y_nd\mu
	=
\lim_n\int Y_nd m
$
so that $X_n$ $m$-converges to $0$.
\end{proof}

In Theorem \ref{th companion}  the set function 
$\mu$ is completely unrestricted. A possible mitigation 
is to require that $\mu$ vanishes on some suitable, 
given collection $\Neg$ of subsets of $\Omega$.

\begin{theorem}
\label{th representation N}
In the same setting as Theorem \ref{th companion}, 
let $\Neg$ an ideal of subsets of $\Omega'$. The 
condition
\begin{equation}
\label{cond N}
\int h(X)dm<0
\quad\text{implies}\quad
\sup_{N\in\Neg}\inf_{\omega'\in N^c} h
\big(X'(\omega')\big)<0
\qquad
h\in\Ha
\end{equation}
is equivalent to the existence of a minimal 
$(\Ring,\mu)\in\Meas(\Omega')$ 
which satisfies $\Neg\subset\Ring$, 
\begin{equation}
\label{Neg representation}
\mu[\Neg]=\{0\},
\quad
h(X')\in L^1(\mu)
\quad\text{and}\quad
\int h(X)dm
	=
\int h(X')d\mu
\qquad
h\in\Ha.
\end{equation}
Moreover, 
(a)
$\mu$ is a probability if and only if
\begin{equation}
\int h(X)dm
	\ge
\sup_{N\in\Neg}\inf_{\omega'\in N^c} h
\big(X'(\omega')\big)
\qquad
h\in\Ha,
\end{equation}
(b)
if $\A$ is a $\sigma$ ring, $m$ is countably 
additive and $\Neg$ a $\sigma$ ideal then 
$\mu$ is countably additive provided
$m_*\big(X\notin X'[N^c]\big)=0$ for all 
$N\in\Neg$.
\end{theorem}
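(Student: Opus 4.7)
The strategy is to reduce Theorem \ref{th representation N} to Theorem \ref{th representation} (much as Theorem \ref{th companion} was), by enlarging $\Ha$ with a formal basis indexed by $\Neg$ so that the representing measure is forced to annihilate every member of $\Neg$. Let $V$ be the free real vector space on the symbols $\{v_N:N\in\Neg\}$, put $\tilde\Ha=\Ha\oplus V$, and define
\[
\tilde T\bigl(h+\textstyle\sum_i c_i v_{N_i}\bigr)=h(X')+\sum_i c_i\set{N_i},
\qquad
\tilde\phi\bigl(h+\textstyle\sum_i c_i v_{N_i}\bigr)=\int h(X)\,dm.
\]
Directedness of $\tilde T$ is inherited from the Stonean vector lattice structure of $\Ha$ (each $h$ is dominated by some $h'\in\Ha$) together with the trivial bound $\babs{\sum_i c_i\set{N_i}}\le\sum_i\abs{c_i}\set{N_i}$.

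The heart of the proof is the equivalence between \eqref{cond N} and the $\tilde T$-conglomerability of $\tilde\phi$. If $\tilde h=h+\sum_i c_i v_{N_i}$ satisfies $\tilde\phi(\tilde h)=\phi(h)<0$, then $N^*=\bigcup_i N_i\in\Neg$ by the ideal hypothesis and $\sum_i c_i\set{N_i}$ vanishes on $(N^*)^c$, so
\[
\inf_{\omega'}(\tilde T\tilde h)(\omega')
\le
\inf_{\omega'\in(N^*)^c}h(X'(\omega'))
\le
\sup_{N\in\Neg}\inf_{\omega'\in N^c}h(X'(\omega'))
<0
\]
by \eqref{cond N}. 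Theorem \ref{th representation} then delivers $(\Ring,\mu)\in\Meas(\Omega')$ and $F^\perp\in\Lin L_+$ annihilating $L\cap\B(\Omega')$ such that $\tilde\phi(\tilde h)=F^\perp(\tilde T\tilde h)+\int\tilde T\tilde h\,d\mu$. Specialising to $\tilde h=cv_N$ yields $0=cF^\perp(\set N)+c\mu(N)=c\mu(N)$, whence $\Neg\subset\Ring$ and $\mu[\Neg]=\{0\}$. Specialising to $\tilde h=h\in\Ha_+$ and using the Stonean identity $\phi(h)=\lim_k\phi(h\wedge k)$ together with $F^\perp((h\wedge k)(X'))=0$ forces $F^\perp(h(X'))=0$, so the representation collapses to the pure integral form \eqref{Neg representation}; minimality of $(\Ring,\mu)$ then issues from Lemma \ref{lemma bochner} applied to the Stonean convex cone generated by $\{h(X'):h\in\Ha_+\}\cup\{\set N:N\in\Neg\}$.

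For the converse implication, any $(\Ring,\mu)$ satisfying \eqref{Neg representation} gives, for each $N\in\Neg$, the inequality $\phi(h)=\int h(X')\,d\mu=\int h(X')\set{N^c}\,d\mu\ge\mu(\Omega')\inf_{\omega'\in N^c}h(X'(\omega'))$ (using $\mu(N)=0$), whence $\sup_N\inf_{N^c}h(X')\le\phi(h)/\mu(\Omega')<0$ whenever $\phi(h)<0$. Part (a) drops out of Theorem \ref{th representation}(a): testing the inequality $\tilde\phi(\tilde h)\ge\inf_{\omega'}(\tilde T\tilde h)(\omega')$ at $\tilde h=h+c\sum_i v_{N_i}$ and letting $c\to+\infty$ extracts precisely the stated condition with $\sup_{N\in\Neg}$. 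The main obstacle is part (b): one must adapt Theorem \ref{th companion}(b)(iii), showing that for each sequence $h_n(X')\downarrow 0$ and each $\varepsilon>0$ one can find a compact $K\subset S$ and $N\in\Neg$ with $m$-measure of $\{X\notin K\}\cup\{X\notin X'[N^c]\}$ below $\varepsilon$; the $\sigma$-ideal hypothesis on $\Neg$ secures the required selection, Dini's theorem applied on $X'[N^c]\cap K$ then yields the Daniell property of the functional $f\mapsto\int f\,d\mu$, and minimality forces $\mu$ to be countably additive.
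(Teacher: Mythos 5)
Your reduction of the main equivalence to Theorem \ref{th representation} by adjoining formal generators $v_N$ and setting $\tilde T(v_N)=\set N$ is a genuinely different route from the paper's, and it essentially works: the paper instead introduces the preorder $f\succeq g$ iff $\sup_{N\in\Neg}\inf_{N^c}(f-g)\ge0$, forms the Stonean vector lattice $\F$ of functions equivalent to some $h(X')$, and applies the positive-functional representation there. Your construction buys a very clean derivation of $\mu[\Neg]=\{0\}$ (test on $cv_N$), while the paper's quotient construction buys the lattice structure that its proof of (b) depends on. Two smaller soft spots in your argument: in the converse direction you divide by $\mu(\Omega')$, but $\mu$ is only a positive finitely additive set function on a ring and $\Omega'$ need not belong to it (one should instead argue from $h(X')^-\set{N^c}$ being small in $L^1(\mu)$); and the minimality claim via Lemma \ref{lemma bochner} needs the cone generated by $\{h(X'):h\in\Ha_+\}\cup\{\set N:N\in\Neg\}$ to be Stonean, which is not automatic for sums $h(X')+c\set N$.

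The genuine gap is part (b). Theorem \ref{th representation N}(b) carries \emph{no} topological hypotheses: unlike Theorem \ref{th companion}(b), there is no assumption that $S$ is a topological space, that $\Ha\subset\Cts S$, or that $X'[\Omega']$ is closed, so the compact set $K$, the tightness selection, and Dini's theorem you invoke are simply unavailable. The hypotheses that \emph{are} given ($\A$ a $\sigma$-ring, $m$ countably additive, $\Neg$ a $\sigma$-ideal) play no role in your sketch, which is a sign the mechanism is wrong. The actual argument is purely measure-theoretic: take a sequence in the representing lattice decreasing pointwise to $0$ --- note these are \emph{not} of the form $h_n(X')$ but only equivalent to such modulo $\Neg$, a point your sketch also glosses over --- set $g_n=\bigwedge_{j\le n}h_j$ and $g=\lim_ng_n$; the relation $f_n\succeq g(X')$ together with the $\sigma$-ideal property yields $\{g(X')>\varepsilon\}\in\Neg$; taking $N=\{g(X')>\varepsilon\}$ gives $\{g>\varepsilon\}\subset X'[N^c]^c$, so the hypothesis $m_*(X\notin X'[N^c])=0$ combined with measurability of $g(X)$ on the $\sigma$-ring $\A$ gives $m(g(X)>\varepsilon)=0$; countable additivity of $m$ then yields $\lim_n\int g_n(X)\,dm=\int g(X)\,dm=0$, which is the Daniell property. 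Without this chain your part (b) does not go through.
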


\begin{proof}
Since $\Neg$ is an ideal,  the binary relation $\succeq$ 
on $\Fun{\Omega'}$ defined by letting
\begin{equation}
\label{pos}
f\succeq g
\quad\text{if and only if}\quad
\sup_{N\in\Neg}\inf_{\omega'\in N^c}(f-g)(\omega')
	\ge
0
\qquad
f,g\in\Fun{\Omega'}
\end{equation}
is a partial order and $f\ge g$ implies $f\succeq g$. 
Moreover, $f_i\succeq g_i$ for $i=1,2$ implies 
$f_1\vee f_2\succeq g_1\vee g_2$. In fact, 
$f_1\vee f_2\succeq f_i\succeq g_i$ i.e. 
$f_1\vee f_2\ge g_i-\varepsilon$ outside of some 
$N_i\in\Neg$.
Thus, $f_1\vee f_2\ge g_1\vee g_2-\varepsilon$ 
outside of $N_1\cup N_2\in\Neg$ which, by 
\eqref{pos}, is equivalent to 
$f_1\vee f_2
	\succeq 
g_1\vee g_2$. 
It is easy to see that, 
relatively to pointwise ordering, the set
\begin{equation}
\F
	=
\big\{f\in\Fun{\Omega'}:f\sim h(X')
\text{ for some }h\in\Ha\big\}
\end{equation}
is a Stonean vector sublattice of $\Fun{\Omega'}$. 
Writing
\begin{equation}
\label{phi}
\phi(f)
	=
\int h(X)dm
\qquad
f\sim h(X'),\ 
h\in\Ha
\end{equation}
implicitly defines, via \eqref{cond N}, a positive 
linear functional on $\F$ so that, by Corollary 
\ref{cor choquet}, we conclude that there exists 
a minimal measurable structure $(\Ring,\mu)$ 
on $\Omega'$ satisfying
\begin{equation}
f\in L^1(\mu)
\quad\text{and}\quad
\phi(f)=\int fd\mu
\qquad
f\in\F.
\end{equation}
Observe that if $N\in\Neg$ then $\set N\sim0$:
thus, $\set N\in\F$, $N\in\Ring$ and $\mu(N)=0$. 
This proves \eqref{Neg representation} while 
the converse implication, is obvious. The proof 
of claim \tiref{a} is easily obtained from the 
one of the corresponding claim in Theorem 
\ref{th representation}. Eventually we prove 
\tiref{b}, once again, by showing that under the 
stated conditions the functional $\phi$ defined 
in \eqref{phi} is a Daniell integral over $\F$. In 
fact, let $\seqn f$ be sequence in $\F$ decreasing 
pointwise to $0$ with $f_n\sim h_n(X')$ and 
$h_n\in\Ha$, $n=1,2,\ldots$. Define 
$g_n=\bigwedge_{1\le j\le n}h_j$ and 
$g=\lim_ng_n$. As shown above, 
$f_n\sim g_n(X')\succeq g(X')$ so that, 
by the assumption that $\Neg$ is a $\sigma$ ideal,
$
\{g(X')>\varepsilon\}
	\subset
\bigcup_n\{g(X')\ge f_n+\varepsilon\}
	\in
\Neg
$
and 
$\{g>\varepsilon\}
	\subset 
X'[\{g(X')\le\varepsilon\}]^c$. Given that $\A$
is a $\sigma$ ring, we conclude that
$m(g(X)>\varepsilon)=0$ and so 
$
\lim_n\phi(f_n)
	=
\lim_n\int g_n(X')d\mu
	=
\lim_n\int g_n(X)dm
	=
\int g(X)d m
	=
0
$.
\end{proof}

\begin{example}\label{ex normal}
Let $(\Omega',\A,P)$ be a classical probability space, 
$S=\R$ and let $X'$ be a normally distributed random 
quantity on $\Omega'$. Fix $m\in fa(\Bor(\R))_+$ 
arbitrarily and let $\Ha=\Cts\R\cap L^1(m)$. Given 
that $P(X'\in B)>0$ for every $B$ open, we conclude 
that $m$ is $X'$-conglomerative relatively to $\Ha$. 
In other words a normally distributed random quantity 
can assume any arbitrary distribution (relatively to 
the continuous functions) upon an accurate choice 
of the reference measure.

In addition, let $\Neg$ consist of all $P$ null sets 
and observe that $X'[N^c]^c$ has $0$ Lebesgue 
measure -- as $P(X'\in X'[N^c]^c)=P(N)=0$ and 
the $P$ distribution of $X'$ is mutually absolutely
continuous with respect to Lebesgue measure -- 
and has therefore empty interior -- so that 
$\cl{X'[N^c]}=\R$. Therefore,
\begin{align*}
\sup_{N\in\Neg}\inf_{\omega\in N^c} h(X'(\omega))
	=
\sup_{N\in\Neg}\inf_{s\in X'[N^c]} h(s)
	=
\sup_{N\in\Neg}\inf_{s\in \cl{X'[N^c]}} h(s)
	=
\inf_{s\in\R}h(s)
\qquad
h\in\Cts\R.
\end{align*}
Property \eqref{cond N} then holds for every 
$m\in fa(\Bor(\R))_+$ with $\Ha=\Cts\R$. One
may then find $\mu$ vanishing on $\Neg$ and
such that $(X',\mu)$ is companion to $m$.

Even if $m$ were countably additive, $\mu$ need 
not be so. The Dirac measure is a good case in 
point of a regular, countably additive measure 
that cannot be represented as the distribution of 
$X'$ with respect to some countably additive 
representing measure $\mu$ which vanishes
on $P$ null sets. To this end we may assume 
in addition that $m$ does not charge sets with 
empty interior. Under this further assumption, 
$m_*(X'[N^c]^c)=0$ so that $\mu$ is countably 
additive by virtue of Theorem 
\ref{th  representation N}.\tiref{b} and vanishes
on $N\in\Neg$. Of course the same conclusion 
holds upon replacing $X'$ with any variable 
possessing a strictly positive density over the 
whole of $\R$. When $m$ and $\mu$ are 
countably additive, one may exploit the fact 
that the indicator of each open subset $B$ of 
$\R$ is the pointwise limit of an increasing 
sequence $\seqn f$ of continuous functions, 
and conclude
\begin{align}
\label{distr}
\mu(X'\in B)
	=
\lim_n\int f_n(X')d\mu
	=
\lim_n\int f_ndm
	=
m(B).
\end{align}
\end{example}

The preceding example may be generalized into
the following:

\begin{theorem}
\label{th representation norm}
Let $\Ha\subset\Cts\R$ be a Stonean sublattice,
$\phi\in\Lin{\Ha}_+$, $X'$ a normally 
distributed random quantity on a standard 
probability space $(\Omega',\A,P)$ and $\Neg$
the collection of all $P$ null sets. There exists 
a minimal $(\Ring,\mu)\in\Meas(\Omega')$ such 
that $\Neg\subset\Ring$, $\mu$ vanishes on 
$\Neg$ and
\begin{equation}
\label{norm representation}
\phi(h)
	=
\int h(X')d\mu
\qquad
h\in\Ha.
\end{equation}
Moreover, if $\Ha$ is an ideal in $\Cts\R$ then
\begin{enumerate}[(i)]
\item
$\mu$ is countably additive if and only if 
$\lim_n\phi(h_n)=0$ for any decreasing 
sequence  $\seqn h$ in $\Ha_+$ which 
converges to $0$ in Lebesgue measure,
\item

$\mu$ is countably additive and $\mu^*(X'\in C)=0$
when $C$ has empty interior if and only if 
$\lim_n\phi(h_n)=0$ for any decreasing sequence 
$\seqn h$ in $\Ha_+$ admitting $0$ as the
largest continuous function dominated by
$\inf_nh_n$.
\end{enumerate}
\end{theorem}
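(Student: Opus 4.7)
The plan is to reduce to Theorem \ref{th representation N}, with the $P$-null ideal $\Neg$ absorbing the topological irregularity of $X'$. The decisive input is that the normal law has a strictly positive Lebesgue density: for every $N \in \Neg$ the inclusion $\{X' \in X'[N^c]^c\} \subseteq N$ yields $P(X' \in X'[N^c]^c) = 0$, hence $X'[N^c]^c$ has empty interior and $X'[N^c]$ is dense in $\R$. Continuity of $h$ then gives
\[
\sup_{N \in \Neg}\inf_{\omega' \in N^c} h(X'(\omega')) = \inf_{s \in \R} h(s),
\]
so positivity of $\phi$ implies the analogue of condition \eqref{cond N} with $\phi$ in place of $\int h(X)dm$. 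The proof of Theorem \ref{th representation N} uses its left-hand side only as a positive linear functional on the Stonean lattice $\Ha$, so it applies verbatim to $\phi$ and produces a minimal $(\Ring,\mu)$ satisfying \eqref{norm representation} with $\mu[\Neg]=\{0\}$.

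For part (i), assume $\Ha$ is an ideal in $\Cts\R$. Necessity follows from absolute continuity: countable additivity of $\mu$ together with $\mu[\Neg]=\{0\}$ forces $\mu$ to be absolutely continuous with respect to $P$, and the normal distribution is mutually absolutely continuous with Lebesgue measure $\lambda$, so $h_n \downarrow 0$ in Lebesgue measure implies $h_n(X') \to 0$ in $\mu$-measure; dominated convergence with dominant $h_1 \in L^1(\mu)$ (supplied by the ideal property together with \eqref{norm representation}) then gives $\phi(h_n) = \int h_n(X')\,d\mu \to 0$. For sufficiency I verify Daniell's condition on the lattice $\F$ built in the proof of Theorem \ref{th representation N}. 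Given $f_n \downarrow 0$ pointwise in $\F$ with $f_n \sim g_n(X')$, I replace $g_n$ by $\tilde g_n = g_1 \wedge \cdots \wedge g_n \in \Ha_+$: since $\Neg$ is a $\sigma$-ideal, all identifications $f_n = \tilde g_n(X')$ hold simultaneously off a single conull set $\Omega'_\infty$, so $\tilde g_n \downarrow 0$ on the Lebesgue-conull set $X'[\Omega'_\infty]$. Monotonicity of $\seqn{\tilde g}$ then forces the decreasing sets $\{\tilde g_n > \varepsilon\}$ to have Lebesgue-null intersection, yielding convergence in Lebesgue measure; the hypothesis gives $\phi(\tilde g_n) \to 0$, i.e.\ $\int f_n\,d\mu \to 0$. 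By the Daniell construction $\mu$ admits a countably additive extension which, by minimality, must coincide with $\mu$.

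Part (ii) proceeds along the same lines with a strictly stronger class of test sequences: the condition that $0$ be the largest continuous minorant of $\inf_n h_n$ is equivalent to $\{\inf_n h_n \ge \varepsilon\}$ having empty interior for every $\varepsilon > 0$, and this class therefore contains approximations from above (in $\Ha_+$) of the indicator of any empty-interior $C$. Running the sufficiency argument of (i) on this larger class delivers countable additivity of $\mu$, while applying the hypothesis to such approximations of $\set C$ produces $\mu^*(X' \in C) = 0$; necessity is the observation that the stronger vanishing on empty-interior sets allows the dominated-convergence step to succeed under the weaker (ii)-hypothesis. The main obstacle in both parts is the passage between convergence modes on $\R$ (pointwise, in Lebesgue measure, through continuous minorants) and convergence on $\Omega'$ via $X'$; this rests throughout on the density of $X'[N^c]$ for $N \in \Neg$ and on the ideal structure of $\Ha$.
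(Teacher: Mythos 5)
Your treatment of the main claim is essentially the paper's: both reduce to Theorem \ref{th representation N} through the observation that $X'[N^c]^c$ is Lebesgue null, hence has empty interior, so that $\sup_{N\in\Neg}\inf_{\omega'\in N^c}h\big(X'(\omega')\big)=\inf_{s\in\R}h(s)$ and positivity of $\phi$ yields the analogue of \eqref{cond N}; the necessity halves of (i) and (ii) are also in substance the paper's argument. Where you diverge is in the sufficiency direction: you try to verify Daniell's condition directly on the lattice $\F$ over $\Omega'$, whereas the paper first shows that the minimal representing measure $m$ on $\R$ is countably additive and regular, uses \emph{minimality} to get $m(F)=0$ for every closed Lebesgue-null (resp.\ nowhere dense) set $F$, deduces $m_*\big(X'[N^c]^c\big)=0$, and only then invokes Theorem \ref{th representation N}.(b). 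You skip the minimality step entirely.

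The genuine gap is in your sufficiency argument for (i). You pass from ``$\tilde g_n\downarrow 0$ Lebesgue-a.e.'' to ``$\tilde g_n\to 0$ in Lebesgue measure'' by arguing that the decreasing sets $\{\tilde g_n>\varepsilon\}$ have null intersection. Continuity from above fails for Lebesgue measure on $\R$ when all these sets have infinite measure: for $\tilde g_n(x)=1\wedge(\abs x-n)^+$ one has $\tilde g_n\downarrow 0$ everywhere while $\{\tilde g_n>1/2\}$ has infinite Lebesgue measure for every $n$, so such a sequence never enters the test class of (i) and your verification of the Daniell property breaks down precisely on the sequences where it is needed (these occur whenever, say, $1\in\Ha$). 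Worse, under this literal reading the implication you want is false: on the ideal of bounded functions in $\Cts\R$ the positive functional $\phi(h)=\LIM_{T}\frac{1}{2T}\int_{-T}^{T}h$ satisfies ``$\phi(h_n)\to0$ whenever $h_n\downarrow$ and $\lambda(h_n>\varepsilon)\to0$'' yet annihilates $\Cts[K]{\R}$ while $\phi(1)=1$, so no countably additive $\mu$ can represent it. The step can only be saved by reading convergence in Lebesgue measure locally (on sets of finite measure); the paper's own assertion that (i) makes $\phi$ a Daniell integral glosses over the same point, but your write-up supplies an explicitly false justification for it. Separately, in (ii) the derivation of $\mu^*(X'\in C)=0$ from decreasing approximations of $\set C$ inside $\Ha_+$ is only gestured at: it requires $C$ closed and a dominating element of $\Ha$ above a neighbourhood of $C$, neither of which is available for an arbitrary empty-interior $C$ and an arbitrary ideal such as $\Ha=\Cts[K]{\R}$; the paper instead kills closed nowhere dense sets via minimality of $m$ and then uses inner regularity.
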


\begin{proof}
A positive linear functional on a vector lattice 
is conglomerative with respect to the identity, 
in its turn a directed map. The representation
of $\phi$ as $\int hdm$, with $m$ minimal, 
follows from Theorem \ref{th representation}; 
\eqref{norm representation} from Example 
\ref{ex normal}. If $\Ha$ is an ideal and $\phi$ 
meets either property, \tiref{i} or \tiref{ii}, then 
it is a Daniell integral and $m$ is a countably 
additive, regular measure on the generated 
$\sigma$ ring, still denoted by $\Ring$. We 
also notice that the indicator of a closed set 
$F\in\Ring$ may be expressed as the pointwise 
limit of a decreasing sequence $\seqn h$ of 
positive, continuous functions with $0\le h_n\le 1$. 
Fix $h\in\Ha_+$. Since $\Ha$ is an ideal, 
$hh_n\in\Ha$ for each $h\in\Ha_+$ and thus
$
\int h\set Fdm
	=
\lim_n\int hh_ndm
	=
\lim_n\phi(hh_n)
$.
Then $\int h\set Fdm=0$ in two different 
situations: when $F$ has $0$ Lebesgue measure 
and $\phi$ satisfies \tiref{i} (as $hh_n$ converges 
then to $0$ in Lebesgue measure) or if $F$ is 
nowhere dense and $\phi$ satisfies \tiref{ii} (as 
$0$ is then the largest, continuous function 
dominated by $h\set F$). In either case the 
restriction of $m$ to $F^c$ is another representing 
measure for $\phi$ so that, by minimality, $m(F)=0$. 
Given that $X'[N^c]^c$ has $0$ Lebesgue measure 
and empty interior when $N\in\Neg$ and that $m$ 
is regular, then \tiref{i} and \tiref{ii} imply 
$m_*(X'[N^c]^c)=0$ and, by Theorem 
\ref{th representation N}, that $\mu$ is countably 
additive. Assume, conversely, that $\mu$ is 
countably additive and let $\seqn h$ be a 
decreasing sequence in $\Ha_+$ with pointwise 
limit $h$. For each fixed $\varepsilon>0$ we
obtain that $\mu^*(h(X')>\varepsilon)=0$ in the
following two cases: when $h_n$ decreases to $0$ 
in Lebesgue measure and $\mu$ meets \tiref{i} (as 
the set $\{h>\varepsilon\}$ has $0$ Lebesgue measure
and thus $\{h(X')>\varepsilon\}\in\Neg$) or when
$0$ is the largest, continuous function dominated 
by $h$ and $\mu$ meets \tiref{ii} (as 
$\{h>\varepsilon\}$ has then empty interior). In 
either case 
$\lim_n\phi(h_n)
=
\lim_n\int h_n(X')d\mu
=
\int h(X')d\mu
=
0$.
\end{proof}

It is implicit in Theorem \ref{th representation norm}
that a normally distributed random quantity may
assume whatever distribution upon a change of the
reference measure and whatever distribution
absolutely continuous with respect to Lebesgue 
measure upon an absolutely continuous change
of the original probability $P$. A version of this 
result will be established with Brownian motion
replacing normal random quantities.

We now show that the existence of companions
may be obtained even outside of the linear case.
Eventually, we turn attention to convex functions. For 
$f\in\Fun\R$ we denote by $D^+f$ and $D^-f$ the 
right and left derivatives and by $f(x+)$ and $f(x-)$ 
the right and left limits at $x$, provided such 
quantities exist. We also set conventionally 
\begin{align*}
D^+f(\infty)=D^-f(\infty)=\lim_{x\to\infty}D^+f(x)
\quad\text{and}\quad
D^+f(-\infty)=D^-f(-\infty)=\lim_{x\to-\infty}D^+f(x).
\end{align*} 
Observe that if $x_0\in\arginf_{x\in\RR}f(x)$, then 
$D^+f(x_0),D^-f(x_0)\in\R$ and that for this reason, 
upon replacing $f$ with the function
$\hat f(x)
=
f(x)
	-
[D^+f(x_0)\sset{x>x_0}+D^-f(x_0)\sset{x\le x_0}]$
we may assume $D^+f(x_0)=D^-f(x_0)=0$.

\begin{theorem}
\label{th convex}
Let $\varphi\in\Fun{\R}$, 
$x_0\in\arginf_{x\in\RR}\varphi(x)$
and assume  
$D^+\varphi(x_0)=D^-\varphi(x_0)=0$.
Define
\begin{equation}
h_u^v(x)
	=
(v-x\vee u)^+\sset{x>x_0}
-
(v\wedge x-u)^+\sset{x\le x_0}
\qquad
x,u,v\in\R.
\end{equation}
Let $\Neg$ be an ideal of subsets of $\Omega$ and 
$X\in\Fun\Omega$.
The following properties are mutually equivalent:
\begin{enumerate}[(i)]
\item\label{iphi}
$\varphi$ is convex and 
$\{u<X<v\}\in\Neg$ implies 
$D^-\varphi(v)\le D^+\varphi(u)$;
\item\label{ilambda}
there exists a $(\Ring,\lambda)\in\Meas(\Omega)$ 
such that 
(a) 
$\Neg\subset\Ring$ and $\lambda[\Neg]=\{0\}$,
(b)
$\lim_n\lambda^*(\abs{X-x_0}<2^{-n})=0$, 
(c)
$\{h_u^v(X):v,u\in\R\}\subset L^1(\lambda)$ and
\begin{equation}\label{riesz lambda}
\varphi(v)
	=
\varphi(u)
	+
\int h_u^v(X) d\lambda
\qquad
v\ge u;
\end{equation}

\item\label{inu}
there exists 
$\nu\in fa(\Bor(\R))_+$ 
countably additive such that
(a)
$\nu(A)=0$ for $A$ open and $X^{-1}(A)\in\Neg$, 
(b)
$\nu^*(\{x_0\})=0$,
(c)
$\{h_u^v:v,u\in\R\}\subset L^1(\nu)$ and
\begin{equation}\label{riesz nu}
\varphi(v)
	=
\varphi(u)
	+
\int h_u^vd\nu
\qquad
v\ge u.
\end{equation}
\end{enumerate}
\end{theorem}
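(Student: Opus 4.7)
The plan is to establish the cycle \imply{iphi}{inu}, \imply{inu}{ilambda}, \imply{ilambda}{iphi}.

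\textbf{Step \imply{iphi}{inu}.} Since $\varphi$ is convex, $D^+\varphi$ is non-decreasing and right-continuous, so the Lebesgue--Stieltjes prescription $\nu((a,b])=D^+\varphi(b)-D^+\varphi(a)$ defines a countably additive Radon measure on $\Bor(\R)$. The assumption $D^+\varphi(x_0)=D^-\varphi(x_0)=0$ yields $\nu(\{x_0\})=0$, i.e.\ (b). Each $h_u^v$ is bounded with compact support, hence in $L^1(\nu)$; the identity $\varphi(v)-\varphi(u)=\int h_u^v\,d\nu$ is obtained by a Fubini/integration by parts argument treating the three sub-cases $u\le v\le x_0$, $u\le x_0\le v$, $x_0\le u\le v$ separately and using $D^+\varphi(x_0)=0$, giving (c). For (a), if $A=(u,v)$ is a basic open interval with $X^{-1}(A)\in\Neg$, then (i) forces $D^-\varphi(v)\le D^+\varphi(u)$, so $\nu(A)\le0$ and hence $\nu(A)=0$; an arbitrary open set is a countable union of such intervals, and the conclusion follows since $\Neg$ is an ideal and $\nu$ is countably additive.

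\textbf{Step \imply{inu}{ilambda}.} Apply Theorem \ref{th representation N} with base data $(\Bor(\R),\nu)$ on $\R$ and the identity as the original map, and with companion $X:\Omega\to\R$ and ideal $\Neg$. As the test class take $\Ha\subset\Fun{\R}$ to be the smallest Stonean vector sublattice containing $\{h_u^v:u,v\in\R\}$ together with the continuous tent functions $h_n(y)=(1-2^n\abs{y-x_0})^+$; every element of $\Ha$ is bounded with compact support and is continuous on $\R\setminus\{x_0\}$. To verify \eqref{cond N}, suppose $h(X)\ge0$ outside some $N\in\Neg$. Then $X^{-1}(\{h<0\}\setminus\{x_0\})\subset N\in\Neg$, and the set $\{h<0\}\setminus\{x_0\}$ is open in $\R$ by continuity of $h$ off $x_0$, so (iii)(a) gives $\nu(\{h<0\}\setminus\{x_0\})=0$; combined with $\nu(\{x_0\})=0$ from (iii)(b), we deduce $\int h\,d\nu\ge0$. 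Theorem \ref{th representation N} therefore yields $(\Ring,\lambda)\in\Meas(\Omega)$ with $\Neg\subset\Ring$, $\lambda[\Neg]=\{0\}$, $h(X)\in L^1(\lambda)$ and $\int h(X)\,d\lambda=\int h\,d\nu$ for all $h\in\Ha$. Specializing to $h=h_u^v$ and invoking (iii)(c) gives \eqref{riesz lambda}. For (ii)(b), since $h_n\ge\tfrac{1}{2}\sset{\abs{y-x_0}<2^{-(n+1)}}$,
\begin{align*}
\lambda^*\big(\{\abs{X-x_0}<2^{-(n+1)}\}\big)
\le
2\int h_n(X)\,d\lambda
=
2\int h_n\,d\nu
\tto
2\nu(\{x_0\})=0
\end{align*}
by dominated convergence ($h_n\le h_1\in L^1(\nu)$ and $h_n\to\sset{x_0}$ pointwise).

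\textbf{Step \imply{ilambda}{iphi}.} Convexity: for $u\le w\le v$ a direct case analysis, splitting on the sign of $x-x_0$ and on the position of $x$ relative to $u,w,v$, verifies the pointwise inequality $h_w^v/(v-w)\ge h_u^w/(w-u)$ on $\R$. Integration against $\lambda$ then yields $(\varphi(v)-\varphi(w))/(v-w)\ge(\varphi(w)-\varphi(u))/(w-u)$, i.e.\ convexity of $\varphi$. For the ideal condition, assume $\{u<X<v\}\in\Neg$, whence $\lambda(u<X<v)=0$. Letting $v'\downarrow u$ in \eqref{riesz lambda} and applying bounded convergence (using $\abs{h_u^{v'}}/(v'-u)\le1$ and (ii)(b) to prevent concentration at $x_0$) gives
\begin{align*}
D^+\varphi(u)=\lambda(x_0<X\le u)-\lambda(u<X\le x_0),
\end{align*}
and symmetrically $D^-\varphi(v)=\lambda(x_0<X<v)-\lambda(v\le X\le x_0)$. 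A case check on the positions of $u$ and $v$ relative to $x_0$, using $\lambda(u<X<v)=0$, confirms $D^-\varphi(v)\le D^+\varphi(u)$ in each case.

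\textbf{Main obstacle.} The hardest step is \imply{inu}{ilambda}: the representation theorem must be invoked on a class of test functions that are only piecewise continuous, with a jump at $x_0$. The trick is that $\nu(\{x_0\})=0$ by (iii)(b) allows the discontinuity to be absorbed in a $\nu$-null set, so the open-set hypothesis of (iii)(a) still applies after deleting $\{x_0\}$ from $\{h<0\}$. A secondary subtlety is that the conclusion of Theorem \ref{th representation N} delivers only a finitely additive $\lambda$, so property (ii)(b) has to be extracted separately by approximating $\sset{x_0}$ with continuous tent functions and invoking dominated convergence against the countably additive $\nu$.
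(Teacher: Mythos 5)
Your proof is correct in substance but runs the cycle in the opposite direction from the paper, and the two routes distribute the work quite differently. The paper proves \timply{i}{ii} first, by hand: it builds a ring $\Ring_0$ of sets of the form $(A_u\cap N_u^c)\cup(A^v\cap N_v^c)\cup N$ with $A_u=\{u<X\le x_0\}$, $A^v=\{x_0<X\le v\}$, defines $\lambda_0$ there from the one-sided derivatives of $\varphi$, checks well-posedness and strong additivity, extends, and verifies \eqref{riesz lambda} via Lemma \ref{lemma L1}; it then gets \timply{ii}{iii} by approximating $h_u^v$ with continuous compactly supported functions and invoking the countably additive companion (Theorem \ref{th companion}). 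You instead start from the essentially classical observation that the Lebesgue--Stieltjes measure of $D^+\varphi$ realizes \tiref{iii} almost for free, and then obtain \tiref{ii} by \emph{pulling back} $\nu$ to $\Omega$ through Theorem \ref{th representation N}, with the jump of $h_u^v$ at $x_0$ absorbed into the $\nu$-null set $\{x_0\}$. Your route is shorter and offloads the finitely additive construction onto the already-proved representation theorem with null ideals; the paper's route is more explicit about the minimal ring supporting $\lambda$ and exhibits the push-forward mechanism that produces $\nu$ from $\lambda$. Both are legitimate; your verification of \eqref{cond N} should be stated for $h(X)\ge-\varepsilon$ off a set of $\Neg$ rather than $\ge 0$, but that is a one-line adjustment.

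The one step whose justification is actually wrong as written is the ``bounded convergence'' in \timply{ii}{i}: for a merely finitely additive $\lambda$, bounded pointwise convergence of $h_u^{v'}(X)/(v'-u)$ does not imply convergence of the integrals, and condition \tiref{ii}(b) does not rescue this --- the danger is not concentration at $x_0$ but concentration of $\lambda$ just to the right of $u$ (or just to the left of $v$). The step is nevertheless repairable, because the discrepancy between $h_u^{v'}(X)/(v'-u)$ and its pointwise limit is bounded by $1$ and supported on $\{u<X<v'\}\subset\{u<X<v\}$, which is $\lambda$-null precisely under the standing hypothesis $\{u<X<v\}\in\Neg$; with that substitution your formulas for $D^+\varphi(u)$ and $D^-\varphi(v)$ hold and the inequality $D^-\varphi(v)\le D^+\varphi(u)$ (in fact equality) follows. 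You should rewrite that justification accordingly, since outside the null-interval situation the claimed limit formulas for the one-sided derivatives need not hold at all.
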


\begin{proof}
\imply{iphi}{ilambda}.
Write 
$\mathcal D
=
\big\{t:D^-\varphi(t)=D^+\varphi(t)\big\}\cup\{x_0\}$
and define 
$A_u=\{u<X\le x_0\}$, 
$A^v=\{x_0< X\le v\}$ 
and
\begin{equation}
\Ring_0
	=
\Big\{\big(A_u\cap N^c_u\big)
\cup
\big(A^v\cap N^c_v\big)\cup N:
u,v\in\mathcal D,\ 
N_u,N_v,N\in\Neg\Big\}.
\end{equation}
It is clear that $\Ring_0$ contains $\Neg$ (upon 
taking $u=v=x_0$) as well as 
$\{A_u,A^v:u,v\in\mathcal D\}$. 
Moreover, it is routine to verify that $\Ring_0$ is 
closed with respect to union and intersection with
\begin{subequations}
\label{lat}
\begin{equation}
H_1\cup H_2
=
\big(A_{u_1\wedge u_2}\cap N^c_u\big)
	\cup
\big(A^{v_1\vee v_2}\cap N^c_v\big)
	\cup 
N
\end{equation}
\begin{equation}
H_1\cap H_2
=
\big(A_{u_1\vee u_2}\cap\hat N^c_u)
	\cup
\big(A^{v_1\wedge v_2}\cap\hat N^c_v\big)
	\cup
\hat N
\end{equation}
\end{subequations} 
whenever 
$H_i
=
\big(A_{u_i}\cap N^c_{u_i}\big)
	\cup
\big(A^{v_i}\cap N^c_{v_i}\big)
	\cup
N_i
	\in
\Ring_0$
for $i=1,2$.
Write 
$
F(x)
	=
D^+\varphi(x\vee x_0)
+
D^-\varphi(x\wedge x_0)
$
and
\begin{equation}
\label{lambda0}
\lambda_0(H)
	=
F(v\vee x_0)-F(u\wedge x_0)
\quad\text{when}\quad
H
=
(A_u\cap N_u^c)\cup(A^v\cap N_v^c)\cup N
\in
\Ring_0.
\end{equation}
To see that $\lambda_0$ is well defined observe 
that if $u_1\wedge x_0<u_2\wedge x_0$ and
\begin{align*}
\big(A_{u_1}\cap N_{u_1}^c\big)
\cup
\big(A^{v_1}\cap N^c_{v_1}\big)\cup N_1
	=
\big(A_{u_2}\cap N_{u_2}^c\big)
\cup
\big(A^{v_2}\cap N^c_{v_2}\big)\cup N_2
	\in
\Ring_0
\end{align*}
then 
$
\{u_1\wedge x_0<X\le u_2\wedge x_0\}
	\in
\Neg
$.
Thus by \tiref{i} and the fact that 
$u_1,u_2\in\mathcal D$
and that $u_1<x_0$, 
\begin{align*}
D^-\varphi(u_1\wedge x_0)
	=
D^-\varphi(u_2\wedge x_0)
\qtext{i.e.}
F(u_1\wedge x_0)=F(u_2\wedge x_0)
\end{align*}
and likewise
$F(v_1\vee x_0)=F(v_2\vee x_0)$. In other words 
$\lambda_0\in fa(\Ring_0)_+$ with 
$\lambda[\Neg]=\{0\}$. 
Moreover, if $H_1,H_2\in\Ring_0$ then by \eqref{lat}
\begin{align*}
\lambda_0(H_1)+\lambda_0(H_2)
	&=
F(v_1\vee x_0)+F(v_2\vee x_0)-F(u_1\wedge x_0)-
F(u_2\wedge x_0)
	\\&=
F(v_1\vee v_2\vee x_0)
	+
F((v_1\wedge v_2)\vee x_0)
	-
F((u_1\vee u_2)\wedge x_0)
	-
F(u_1\wedge u_2\wedge x_0)
	\\&=
\lambda_0(H_1\cup H_2)+\lambda_0(H_1\cap H_2)
\end{align*}
i.e. $\lambda_0$ is strongly additive on $\Ring_0$. 
It follows from \cite[3.1.6 and 3.2.4]{rao} that 
$\lambda_0$ admits a unique extension 
$\lambda_1\in fa(\Ring_1)_+$ to the generated 
ring $\Ring_1$. Let $I$ be an interval with 
endpoints in $\R\cup\{x_0\}$. Given that 
$\mathcal D$ is dense in $\R\cup\{x_0\}$, 
$\lambda^*(X\in I)<\infty$. By 
\cite[3.4.1 and 3.4.4]{rao} we obtain a further 
extension $\lambda\in fa(\Ring)_+$ to the ring
$
\Ring
	=
\big\{A\subset\Omega:\lambda_1^*(A)<\infty\big\}
$.
Then $\{X\in I\}\in\Ring$ and $X\set I(X)$ is 
$\lambda$-measurable whenever $I$ is as above, 
by Lemma \ref{lemma measurable}. Therefore, 
\begin{align*}
\int_{u\vee x_0}^{v\vee x_0}D^+\varphi(t)dt
	&=
\int_{u\vee x_0}^{v\vee x_0}\set{\mathcal D}
[D^+\varphi(t)-y_0^+]dt\\
	&=
\int_u^v\set{\mathcal D}\lambda_1(x_0<X\le t)dt\\
	&=
\int_u^v\lambda(x_0<X\le t)dt\\
	&=
\int_{x_0}^\infty(v-u\vee X)^+d\lambda
&\text{(by Lemma \ref{lemma L1})}
\end{align*}
and similarly 
$
\int_{u\wedge x_0}^{v\wedge x_0}D^+\varphi(t)dt
	=
	-
\int_{-\infty}^{x_0}(v\wedge X-u)^+d\lambda
$.
We conclude 
\begin{align*}
\varphi(v)-\varphi(u)
	&=
\int_{u\vee x_0}^{v\vee x_0}D^+\varphi(t)dt
+
\int_{u\wedge x_0}^{v\wedge x_0}D^-\varphi(t)dt
	=
\int h_u^v(X)d\lambda.
\end{align*}
Fix an increasing $\seqn u$ and a decreasing 
$\seqn v$ sequence in $\mathcal D$ converging 
to $x_0$, with $u_n<u_{n+1}<x_0$ if 
$x_0>-\infty$ and $v_n>v_{n+1}>x_0$ if 
$x_0<\infty$. Then,
\begin{align*}
\lim_n\lambda^*(u_n<X<v_n)
	&\le
\lim_nD^+\varphi(v_n)
  -
D^-\varphi(u_n)
	=
0
\end{align*}
so that $\lim_n\lambda^*(\abs{X-x_0}<2^{-n})=0$.

\timply{ii}{iii}. 
With $u_n$ and $v_n$ defined as above, define the
function
\begin{align*}
h_u^v(x;n)=
\begin{cases}
h_u^v(x),&
\text{if }
x\notin(u_n,v_n]\\
h_u^v(u_n)\frac{u_{n+1}-x}{u_{n+1}-u_n},&
\text{if }
x\in (u_n,u_{n+1}]\\
h_u^v(v_n)\frac{x-v_{n+1}}{v_n-v_{n+1}},&
\text{if }
x\in(v_{n+1},v_n].
\end{cases}
\end{align*}
Then, $h_u^v(\cdot;n)$ is a continuous function 
vanishing outside of the interval 
$[u\wedge v_{n+1},v\vee u_{n+1}]$.
Moreover: 
\tiref{a}
$\big\{\babs{h_u^v(x;n)-h_u^v(x)}>c\big\}
\subset
(u_n,v_n]$
so that $h_u^v(X;n)$ is $\lambda$-convergent to 
$h_u^v(X)$,
\tiref{b}
$\babs{h_u^v(x;n)}\le\babs{h_u^v(x;n+1)}
\le
\babs{h_u^v(x)}$ 
,
\tiref{c}
$\lim_nh_u^v(x;n)=h_u^v(x)$ for all $x\ne x_0$ 
and 
\tiref{d}
$h_u^v(X;n)$ is $\lambda$-measurable and 
therefore an element of $L^1(\lambda)$. Let 
$(X',\nu)$, with $\Omega'=\R$ and $X'$ the 
identity, be the countably additive companion 
of $(X,\lambda)$ relatively to the family 
$\{h(X):h\in\Cts[K]\R\}$. It follows that
\begin{align*}
\int h_u^v(X)d\lambda
=
\lim_n\int h_u^v(X;n)d\lambda
=
\lim_n\int h_u^v(x;n)d\nu
=
\int h_u^vd\nu.
\end{align*}
Observe that if $x_0\in\R$ and 
$g_n\in\Cts[K]\R$ is such that 
$\set{(u_n,v_n]}
	\ge 
g_n\ge\set{(u_{n+1},v_{n+1}]}$,
then
\begin{align*}
\nu^*(\{x_0\})
\le
\lim_n\int g_n(X)d\lambda
\le
\lim_n\lambda(u_n<X\le v_n)
=
0.
\end{align*}

Let $I\subset\R$ be an open interval with 
$X^{-1}(I)\in\Neg$ and $\seqn g$ a sequence 
of non negative, continuous functions which 
increases to $\set I$. It is then obvious that
\begin{align*}
0
	=
\lim_n\int g_n(X)d\lambda
	=
\lim_n\int g_nd\nu
	=
\nu(I).
\end{align*}
The conclusion extends to open sets.

\imply{inu}{iphi}.
If $\varphi$ satisfies \eqref{riesz nu} it is clearly 
convex since the function $v\to h_u^v(x)$ is 
convex for every $u\le v$. Assume that $u<v$ 
and $\{u<X<v\}\in\Neg$. Then, $\nu((u,v))=0$ 
so that, for arbitrary $u<t<v$
\begin{equation}
\frac{\varphi(v)-\varphi(u)}{v-u}
	=
\begin{cases}
\nu([x_0,t)),&
	\text{if  }
v>u\ge x_0\\
\nu([t,x_0)),&
	\text{if }
x_0\ge v>u\\
0,&
	\text{if }
v> x_0> u
\end{cases}
\end{equation}
and \tiref{i} follows.
\end{proof}

If, e.g., $\varphi$ is differentiable at $x_0$,
then \eqref{riesz lambda} simplifies into:
\begin{equation}
\varphi(v)
=
\varphi(x_0)
+
\int_{\{v<X\le x_0\}}(X-v)d\lambda
+
\int_{\{x_0<X\le v\}}(v-X)d\lambda.
\end{equation}

The above result can be stated in a slightly different 
way:
\begin{corollary}
Let $X\in\Fun{\Omega}$ with 
$\cl{X[\Omega]}=\R$, 
$\varphi\in\Fun{\R}$. Define $x_0$ and 
$h_u^v$ as in Theorem \ref{th convex} 
and assume 
$D^+\varphi(x_0)=D^-\varphi(x_0)=0$. $\varphi$ 
is convex if and only if there exists a measure 
structure $(\Ring,\lambda)$ on $\Omega$ such that 
(a)
$\lambda(u<X<v)=0$ when 
$ D^+\varphi(v)
	\le 
D^-\varphi(u)$,
(b) 
$\{h_u^v(X):v\ge u\}\subset L^1(\lambda)$ and
\begin{equation}
\label{phi nu}
\varphi(v)
	=
\varphi(u)
	+
\int h_u^v(X)d\lambda
\qquad
v\ge u.
\end{equation}
\end{corollary}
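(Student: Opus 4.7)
The plan is to derive the corollary from Theorem~\ref{th convex} by making a concrete choice of the ideal $\Neg$ built from the derivative data of $\varphi$ itself. The easy direction (existence of $(\Ring,\lambda)$ implies convexity) is the argument \imply{inu}{iphi} of Theorem~\ref{th convex}: the map $v\mapsto h_u^v(x)$ is convex for each fixed $u$ and $x$, so $v\mapsto\varphi(u)+\int h_u^v(X)d\lambda$ is an integral of convex functions and is therefore convex.

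For the forward direction, assume $\varphi$ is convex and set
\begin{equation*}
\Neg = \Big\{N\subset\Omega : N\subset\bigcup_{i=1}^n\{u_i<X<v_i\} \text{ for some finite collection with } D^+\varphi(v_i)\le D^-\varphi(u_i)\Big\}.
\end{equation*}
This collection is closed under subsets and finite unions, hence an ideal. I propose to invoke \imply{iphi}{ilambda} of Theorem~\ref{th convex} with this choice. The only premise needing verification is that $\{u<X<v\}\in\Neg$ entails $D^-\varphi(v)\le D^+\varphi(u)$.

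So fix such a pair with $u<v$ and a covering $\{u<X<v\}\subset\bigcup_{i=1}^n\{u_i<X<v_i\}$ witnessing membership in $\Neg$. Convexity forces the chain $D^-\varphi(u_i)\le D^+\varphi(u_i)\le D^-\varphi(v_i)\le D^+\varphi(v_i)\le D^-\varphi(u_i)$ to collapse, so $\varphi$ is affine on $[u_i,v_i]$ with a single slope $c_i$. The density assumption $\cl{X[\Omega]}=\R$ then gives $(u,v)\subset\bigcup_i[u_i,v_i]$: every $t\in(u,v)$ is a limit of points $X(\omega_k)\in\bigcup_i(u_i,v_i)$, and by finiteness of the covering $t$ lies in one of the closed intervals $[u_i,v_i]$. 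The crux is to upgrade this piecewise affinity to $D^+\varphi\equiv c$ on all of $(u,v)$. Form the graph $G$ on $J=\{i:[u_i,v_i]\cap(u,v)\ne\emp\}$ by joining $i,j$ whenever $[u_i,v_i]\cap[u_j,v_j]\cap(u,v)\ne\emp$. Were $G$ disconnected, its components would partition $(u,v)$ into disjoint non-empty subsets that are closed in $(u,v)$, contradicting connectedness; so $G$ is connected. Since overlapping intervals share a common slope (evaluate $D^+\varphi$ at any shared point of $(u,v)$), all the $c_i$ collapse to one constant $c$. Right-continuity of $D^+\varphi$ at $u$ and left-continuity of $D^-\varphi$ at $v$ then deliver $D^+\varphi(u)=c=D^-\varphi(v)$, as required.

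Theorem~\ref{th convex} now supplies $(\Ring,\lambda)\in\Meas(\Omega)$ with $\Neg\subset\Ring$, $\lambda[\Neg]=\{0\}$, and \eqref{riesz lambda}. Clause (a) of the corollary is automatic since each $\{u<X<v\}$ with $D^+\varphi(v)\le D^-\varphi(u)$ lies in $\Neg$, while (b) is \eqref{riesz lambda} itself. The main obstacle is the combinatorial-topological step promoting piecewise affinity on the covering intervals to global affinity on $(u,v)$; this is the only place the density hypothesis $\cl{X[\Omega]}=\R$ is genuinely needed, and it is what rules out "phantom" jumps of $D^+\varphi$ between disjoint groups of covering intervals.
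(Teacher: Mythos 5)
Your proposal is correct and follows essentially the same route as the paper: both directions reduce to Theorem \ref{th convex} with an ideal $\Neg$ generated by the sets $\{u<X<v\}$ satisfying $D^+\varphi(v)\le D^-\varphi(u)$, the density of $X[\Omega]$ being used to tie set membership back to the derivative condition. In fact your version is the more careful one — the paper simply asserts that the collection of such sets is an ideal and that membership characterizes the derivative inequality, whereas you pass to the generated ideal and supply the covering/connectedness argument needed to show that no set $\{u<X<v\}$ violating affinity can sneak into it.
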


\begin{proof}
Define
$
\Neg
	=
\big\{\{u<X<v\}:u,v\in\R, D^+\varphi(v)
	\le 
D^-\varphi(u)\big\}
$.
From $\cl{X[\Omega]}=\R$ follows that 
$\{u<X<v\}\in\Neg$ if and only if 
$D^+\varphi(v)\le D^-\varphi(u)$ and that 
$\Neg$ is an ideal of sets. Then \eqref{phi nu} 
follows from Theorem \ref{th convex}.\tiref{iii}.
\end{proof}

\section{Applications to Statistics and Probability.}
\label{sec applications}

Returning to the Bayesian problem described 
in the Introduction, fix a
$(\A,m)\in\Meas(\Omega)$.

\begin{theorem}\label{th bayes}
Let $X\in\Fun{\Omega,S}$. The following properties 
are equivalent:
(i)
there exist a family $\{Q_\theta:\theta\in\Theta\}$ 
of probabilities on $\A$ and an injective map
$G\in\Fun{\Theta,S}$ satisfying
\begin{subequations}
\begin{equation}\label{th conglo}
\int hdm<0
	\qtext{implies}
\inf_{\theta\in\Theta}\int hdQ_\theta<0
	\qquad
h\in\Sim(\A),
\end{equation}
\begin{equation}\label{Q*}
Q_\theta^*\big(A\cap\{X\ne G(\theta)\}\big)=0
\qquad
A\in\A,\ \theta\in\Theta;
\end{equation}
\end{subequations}
(ii)
there exist $K\in\Fun{\A\times S}$ and 
$(\Ring,\mu)\in\Meas(\Omega)$ such that
$\{K_s:s\in S\}\subset ba(\A)_+$ and, for 
each $A\in\A$, $E\subset S$ and $s\in S$,
\begin{subequations}
\begin{equation}
\label{kernel}
K(A, X)\in L^1(\mu)
\qtext{and}
m(A)
	=
\int K(A,X)d\mu,
\end{equation}
\begin{equation}\label{takeout}
A\cap\{X\in E\}\in\A(K_s) 
\qtext{and}
K\big(A\cap\{X\in E\};s\big)
	=
K\big(A;s\big)\set E(s).
\end{equation}
\end{subequations}
\end{theorem}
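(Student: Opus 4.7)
The plan is to reduce each direction to Theorem \ref{th representation} applied to the triple $(\Sim(\A),\phi,T)$ with $\phi(h)=\int h\,dm$ and $Th(\theta)=\int h\,dQ_\theta$, complemented by a pushforward construction that transfers the representing measure from $\Theta$ to $\Omega$.

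For (i)$\Rightarrow$(ii): condition \eqref{th conglo} is exactly that $\phi$ is $T$-conglomerative, and since $\Sim(\A)$ is a vector lattice with $\abs{Th}\le T\abs{h}$, the map $T$ is directed. Theorem \ref{th representation} produces $F^\perp$ and $(\Ring_\Theta,\lambda)\in\Meas(\Theta)$ with $\phi(h)=F^\perp(Th)+\int Th\,d\lambda$; since $Q_\theta$ is a probability, $Th$ is bounded and hence $F^\perp(Th)=0$, leaving
\begin{equation*}
\int h\,dm=\int_\Theta\Big(\int h\,dQ_\theta\Big)\,d\lambda(\theta),\qquad h\in\Sim(\A).
\end{equation*}
From \eqref{Q*} we deduce $\{X=G(\theta)\}\ne\emptyset$ for each $\theta$ (otherwise $\{X\ne G(\theta)\}=\Omega$ and $Q_\theta^*(\{X\ne G(\theta)\})=1$), so by the axiom of choice pick $\sigma:\Theta\to\Omega$ satisfying $X\circ\sigma=G$, which is injective because $G$ is. Set $\Ring=\{B\subset\Omega:\sigma^{-1}(B)\in\Ring_\Theta\}$, $\mu(B)=\lambda(\sigma^{-1}(B))$, and $K(A,s)=Q_{G^{-1}(s)}(A)\set{G(\Theta)}(s)$. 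The change of variables $\int K(A,X)\,d\mu=\int_\Theta Q_\theta(A)\,d\lambda=m(A)$ gives \eqref{kernel}, and \eqref{takeout} follows case by case using \eqref{Q*} to obtain $A\cap\{X=G(\theta)\}\in\A(Q_\theta)$ with $Q_\theta(A\cap\{X=G(\theta)\})=Q_\theta(A)$, together with $K_s=0$ off $G(\Theta)$.

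For (ii)$\Rightarrow$(i): put $\Theta=\{s\in S:K_s\ne0\}$, let $G:\Theta\hookrightarrow S$ be the (trivially injective) inclusion, and $Q_\theta=K_\theta/K_\theta(\Omega)$. Applying \eqref{takeout} with $E=S\setminus\{\theta\}$ yields $A\cap\{X\ne\theta\}\in\A(K_\theta)$ with $K_\theta(A\cap\{X\ne\theta\})=0$, whence \eqref{Q*}. By linearity \eqref{kernel} extends to $\int h\,dm=\int g\,d\mu$ with $g(\omega)=\int h\,dK_{X(\omega)}$ for every $h\in\Sim(\A)$; if $\int h\,dm<0$ then $g(\omega_0)<0$ for some $\omega_0$, necessarily with $X(\omega_0)\in\Theta$ (otherwise $K_{X(\omega_0)}=0$ and $g(\omega_0)=0$), so $\int h\,dQ_{X(\omega_0)}<0$, proving \eqref{th conglo}.

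The delicate step is (i)$\Rightarrow$(ii), specifically the passage from $\lambda$ on $\Theta$ to $\mu$ on $\Omega$: one must invoke AC to construct the section $\sigma$, whose existence is guaranteed only by the support hypothesis \eqref{Q*}, and then verify that the pushforward $\mu=\sigma_\ast\lambda$, combined with the kernel $K$ reconstructed via $G^{-1}$, reproduces both the disintegration \eqref{kernel} and the take-out identity \eqref{takeout} together with its hidden measurability requirement $A\cap\{X\in E\}\in\A(K_s)$.
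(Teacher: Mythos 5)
Your proof is correct in substance, but the forward direction takes a genuinely different route from the paper's. Where you apply Theorem \ref{th representation} on $\Theta$ with $(Th)(\theta)=\int h\,dQ_\theta$ and then transport the resulting $\lambda$ to $\Omega$ by pushing forward along a choice-function section $\sigma$ with $X\circ\sigma=G$, the paper first builds the kernel $K(A,s)=Q_{G^{-1}(s)}(A)\set{G[\Theta]}(s)$ and applies Theorem \ref{th representation} \emph{once, directly on $\Omega$}, with $(Th)(\omega)=K(h;X(\omega))$; condition \eqref{Q*} enters only through the inequality $\inf_\omega K(h;X(\omega))\le\inf_\theta\int h\,dQ_\theta$, which needs exactly the nonemptiness of each $\{X=G(\theta)\}$ that you also extract. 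The paper's formulation buys you a representing measure already living on $\Omega$, with no section and no change-of-variables step; your formulation isolates the disintegration $\int h\,dm=\int_\Theta Q_\theta(h)\,d\lambda$ on the parameter space (arguably the more natural Bayesian object) at the price of the pushforward verification you yourself flag as delicate. That step does go through --- with $\Ring=\{B:\sigma^{-1}(B)\in\Ring_\Theta\}$ one checks $\mu^*(E)=\lambda^*(\sigma^{-1}(E))$ because $\Omega\setminus\sigma[\Theta]$ is $\mu^*$-null and $\sigma$ is injective, so $\Ring_\Theta$-simple approximants of $f\circ\sigma$ lift to $\Ring$-simple approximants of $f$ and integrability transfers --- but you should say this rather than assert it. In the converse direction your argument is essentially the paper's, and in two respects cleaner: you normalize $Q_\theta=K_\theta/\norm{K_\theta}$ so that the $Q_\theta$ are actually probabilities (the paper silently sets $Q_\theta=K_s$; note you should write $\norm{K_\theta}$ rather than $K_\theta(\Omega)$, since $\A$ is only a ring), and you replace the paper's appeal to Theorem \ref{th companion} by the elementary remark that $\int g\,d\mu<0$ with $\mu\ge0$ forces $\inf_\omega g(\omega)<0$.
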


\begin{proof}
\timply{i}{ii}.
Since $G$ is injective we may define 
$K\in\Fun{\A\times S}$ by letting
\begin{equation}
\label{K}
K(A,s)
	=
Q_{G^{-1}(s)}(A)
\qquad
A\in\A,\ s\in G[\Theta]
\end{equation}
or $K(A,s)=0$ if $s\notin G[\Theta]$.
By \eqref{Q*},
$
\inf_\omega K(h;X(\omega))
	\le
\inf_\theta Q_\theta(h)
$
for every 
$
h\in\Sim(\A)
$
so that, letting $(Th)(\omega)=K(h;X(\omega))$ in 
Theorem \ref{th representation}, we conclude that 
$T$ is directed and $m$ is $T$-conglomerative. 
There exists then $(\Ring,\mu)\in\Meas(\Omega)$
such that
\begin{align*}
K(h,X)\in L^1(\mu)
\qtext{and}
\int hdm
	=
\int K(h,X)d\mu
	\qquad
h\in\Sim(A).
\end{align*}
If $A\in\A$ and $E\subset S$, then either 
$Q_\theta^*(A\cap\{X\in E\})=0$ 
(if $G(\theta)\notin E$) or 
$Q_\theta^*(A\cap\{X\in E^c\})=0$. In either case
$A\cap\{X\in E\}\in\A(K_s)\cap\A(Q_\theta)$ and
\begin{align*}
K\big(A\cap\{X\in E\};s\big)
	=
Q_{G^{-1}(s)}\big(A\cap\{X\in E\}\big)
	=
Q_{G^{-1}(s)}\big(A\cap\{X\in E\}\big)\set E(s)
	=
K(A,s)\set E(s).
\end{align*}
\timply{ii}{i}.
Take $S_0=\{s\in S:K_s\ne0\}$, $\Theta=S_0$, 
$Q_\theta=K_s$ and $G$ the identity. Then, 
\eqref{Q*} follows from \eqref{takeout}. To 
deduce \eqref{th conglo} from \eqref{kernel} 
it is enough to remark, via Theorem 
\ref{th companion}, that the identity on $S$ 
is trivially a companion to $X$ (relatively to 
the whole of $L^1(\mu)$).
\end{proof}

The kernel $K(A,s)$ in Theorem \ref{th bayes}
plays a prominent role in statistics in which it 
is interpreted as the prevision of $A$ 
conditional on the occurrence of $X=s$. Its 
existence is generally deduced from that of 
regular conditional expectation and requires 
some classical properties such as $S$ being
a Blackwell space. In Theorem \ref{th bayes}, 
instead, the existence of $K$ follows from 
$X$ strictly separating priors, so that each 
$\theta\in\Theta$ may be interpreted as a 
corresponding hypothesis concerning $X$. 

The following is an example of \eqref{Q*} in 
the classical setting.

\begin{example}
Let $X_1,X_2,\ldots$ be $m$-measurable random 
quantities on $\Omega$. Define implicitly the map
\begin{equation}
F(\omega,t)
	=
\lim_k\liminf_n\frac1n
\sum_{j=1}^n\sset{X_i\le t+2^{-k}}(\omega)
\end{equation}
of $\Omega$ into the set $\mathscr X$ of increasing, 
right continuous, $[0,1]$-valued functions on $\R$, 
the limiting empirical distribution. For each 
$\theta\in\Theta$, let $G(\theta)$ be a candidate 
distribution. In the classical case, with each 
$Q_\theta$ countably additive and the sequence 
$X_1,X_2,\ldots$ independently and identically 
distributed under each $Q_\theta$, condition 
\eqref{Q*}, with $X=F$, is a simple consequence 
of the strong law of large numbers, examined by 
Doob \cite{doob}. In order to guarantee that the 
inverse of $G$ is Borel measurable Doob assumes 
that $G$ is Borel measurable and that $\Theta$ 
is a subset of a complete and separable metric 
space, see also \cite{diaconis freedman}. 
\end{example}

We pass now to the classical problem of Skhorohod 
which has been studied by a number of authors too 
large to give exact references. We have been 
influenced by the work of Berti, Pratelli and Rigo 
\cite{berti pratelli rigo}. The starting point is the 
construction of a universal representation for the 
case of a separable space.

\begin{corollary}
\label{cor skhorohod}
Let $U\in\Fun{\Omega}$ with $\cl{U[\Omega]}$ 
having non empty interior and let $S$ be a separable, 
topological space. There exists a Borel function 
$H\in\Fun{\R,S}$ with countable range and such 
that $X'=H(U)$ is companion to any  pair $(X,m)$ 
relatively to $\Cts S$. 
\end{corollary}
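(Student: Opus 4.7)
The plan is to exploit the separability of $S$ together with the largeness of $\cl{U[\Omega]}$ to build $H$ so that $H(U)[\Omega]$ contains a countable dense subset of $S$. Once this is secured, the conglomerability condition \eqref{conglo X} of Theorem \ref{th companion} holds automatically for every pair $(X,m)$ relatively to $\Cts S$, and the companion measure $\mu$ is delivered by that theorem.

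First I would fix a countable dense set $\{s_n:n\in\N\}\subset S$ and, using the hypothesis that $\cl{U[\Omega]}$ has non empty interior, pick an open interval $(a,b)\subset\cl{U[\Omega]}$. Next choose a strictly increasing sequence $a=c_0<c_1<c_2<\ldots$ with $c_n\to b$, set $I_n=(c_{n-1},c_n)$, and define $H\in\Fun{\R,S}$ by $H(x)=s_n$ if $x\in I_n$ and $H(x)=s_1$ on $\R\setminus\bigcup_nI_n$. Being constant on each element of a Borel partition of $\R$, $H$ is Borel with countable range $\{s_n:n\in\N\}$.

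The key assertion is that $X'=H(U)$ has dense image in $S$. Indeed $I_n\subset(a,b)\subset\cl{U[\Omega]}$, so every point of $I_n$ is a limit point of $U[\Omega]$; since $I_n$ is itself an open neighborhood of each of its points, $I_n\cap U[\Omega]\ne\emp$. Picking $\omega_n\in\Omega$ with $U(\omega_n)\in I_n$ gives $s_n=H(U(\omega_n))\in X'[\Omega]$, so $\{s_n:n\in\N\}\subset X'[\Omega]$, and density in $S$ follows.

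To conclude, let $(\A,m)\in\Meas(\Omega)$ and $X\in\Fun{\Omega,S}$ be arbitrary. Since $\Cts S$ is a Stonean vector sublattice of $\Fun S$, it suffices to verify \eqref{conglo X}. If $h\in\Cts S$ satisfies $h(X)\in L^1(m)$ and $\int h(X)dm<0$, then positivity of $m$ forces $h(X(\omega_0))<0$ for some $\omega_0\in\Omega$, whence $\inf_{s\in S}h(s)<0$. The continuity of $h$ combined with the density of $X'[\Omega]$ in $S$ then yields
\[
\inf_{\omega\in\Omega}h(X'(\omega))
=
\inf_{s\in X'[\Omega]}h(s)
=
\inf_{s\in S}h(s)
<
0,
\]
which is precisely \eqref{conglo X}. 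Theorem \ref{th companion} then provides a (minimal) $(\Ring,\mu)\in\Meas(\Omega)$ making $(X',\mu)$ a companion to $(X,m)$ relatively to $\Cts S$. The only mildly delicate step is the density assertion, which rests on each open $I_n$ meeting $U[\Omega]$ because $I_n$ lies entirely inside $\cl{U[\Omega]}$; all remaining steps are straightforward.
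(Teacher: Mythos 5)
Your proof is correct and follows essentially the same route as the paper: both arguments partition an interval contained in $\cl{U[\Omega]}$ into countably many Borel pieces, each necessarily meeting $U[\Omega]$, send the $n$-th piece to the $n$-th element of a countable dense subset of $S$ so that $X'[\Omega]$ is dense, and then verify \eqref{conglo X} by noting that $\{h<0\}$ is open and nonempty whenever $\int h(X)dm<0$. The only cosmetic differences are that the paper first normalizes $U$ so that $\cl{U[\Omega]}=[0,1]$ and uses half-open dyadic intervals, while you work directly inside an open interval $(a,b)$; neither affects the substance.
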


\begin{proof}
By the remarks following Theorem \ref{th companion} 
we can assume with no loss of generality that $X$ is 
the identity. Given that $[a,b]\subset\cl{U[\Omega]}$ 
for some $a,b\in\R$ then, upon replacing $U$ with a 
suitable continuous transformation, we can assume 
that $\cl{U[\Omega]}=[0,1]$. Let $S_0$ be a 
countable, dense subset of $S$ and 
$\iota\in\Fun{\N,S_0}$ an enumeration of $S_0$. 
Define, 
\begin{equation}
\label{GH}
G(x)
	=
\inf\big\{n\in\N:1-2^{-n}\ge x\big\}
\qquad
x\in(0,1)
\quad\text{and}\quad
H
	=
\iota\circ G.
\end{equation} 
$H$ is a Borel function mapping $(0,1)$ onto $S_0$ 
-- since $G^{-1}({n})=(1-2^{-(n-1)},1-2^{-n}]$. 
If $h\in\Ha$ and $\int hdm<0$ then $\{h<0\}$ is 
an open, non empty subset of $S$ and as such it 
contains some element $\iota(n_h)$ of $S_0$. The 
set 
$
B_h
	=
\big\{U\in G^{-1}(n_h)\big\}
$
is non empty (as $\cl{U[\Omega]}=[0,1]$) and 
coincides with $\{X'=\iota(n_h)\}$. Thus, 
$B_h\subset\{h(X')<0\}$ so that $m$ is 
$X'$-conglomerative relatively to $\Ha$.
\end{proof}

Corollary \ref{cor skhorohod} extends to the case of 
finite additivity and of a separable state space the 
classical idea of generating a random quantity with 
given distribution by applying to a uniformly distributed 
random quantity the inverse of the corresponding 
cumulative density function. Interestingly, we obtain 
that the {\it same} function $X$ represents {\it all} 
possible distributions relatively to the class of 
continuous functions and for some suitable set 
function $\mu$. Let us also mention the possibility 
of dropping the condition that $S$ is separable by 
assuming that $m$ is supported by a measurable, 
separable subset of $S$.

We highlight the advantage of doing without 
measurability. Constructing a function such as 
$U$ in Corollary \ref{cor skhorohod} is a rather 
trivial exercise as long as $\Omega$ has the 
right cardinality. Requiring that $U$ is uniformly 
distributed on the unit interval under some 
classical probability measure $P$, as in the 
following Theorem \ref{th skhorohod}, requires, 
in contrast, additional assumptions. The following 
result is inspired by 
\cite[theorem 3.1]{berti pratelli rigo}.

\begin{theorem}
\label{th skhorohod}
Let $S$ be a normal, separable topological space, 
$\Sigma$ a ring of subsets of $S$ and 
$(\Omega,\A,P)$ a classical probability space 
supporting a random quantity $U$ uniformly 
distributed on $(0,1)$. Let either 
$m\in fa(\Sigma)_+$ be countably additive 
or $S$ be compact and write 
$\Ha=\Cts S\cap L^1(m)$. There exists a Borel 
function $g\in\Fun{(0,1),S}$ such that $X=g(U)$ 
is supported by $(\Omega,\A,P)$ and
\begin{equation}
\int hdm
	=
\int h(X)dP
\qquad
h\in\Ha.
\end{equation}
\end{theorem}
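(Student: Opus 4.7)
The proof reduces Theorem \ref{th skhorohod} to the classical Skorokhod representation on a Polish space by embedding $S$ via a countable separating family of continuous functions. Under either hypothesis, I would first represent $\phi(h) = \int h\, dm$ on $\Ha$ as an integral against a countably additive regular Borel measure $\tilde m$ on $S$: in the compact case, $\phi$ is a bounded positive linear functional on $\Cts S$ and the Riesz--Markov--Kakutani theorem supplies $\tilde m$; in the countably additive case, $m$ extends uniquely to the $\sigma$-ring generated by $\Sigma$ and, exploiting normality, one obtains $\tilde m$ on $\Bor(S)$ concentrated on a $\sigma$-compact subset $S_0 = \bigcup_n K_n \subset S$.

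Next, since $S$ is normal and separable, Urysohn's lemma furnishes a countable family $\{h_n\} \subset \Cts S$ with $\|h_n\|_\infty \le 1$ separating the points of $S$. The map $\Phi(s) = (h_n(s))_n$ continuously injects $S$ into the compact metrizable Polish space $K = [-1,1]^{\N}$. Because $\Phi$ restricts to a continuous injection of each compact $K_n$ into a Hausdorff space, it is a homeomorphism $K_n \to \Phi(K_n)$, hence $\Phi^{-1}$ is Borel on the Borel set $\Phi(S_0) \subset K$. Let $\hat m = \Phi_* \tilde m$, a regular Borel probability on $K$ concentrated on $\Phi(S_0)$. Classical Skorokhod on $K$ (coordinate-wise inverse cumulative distribution functions applied to $U$) produces a Borel $\hat g \colon (0,1) \to K$ with $\hat g_* \mathrm{Leb} = \hat m$; setting $g(u) = \Phi^{-1}(\hat g(u))$ whenever $\hat g(u) \in \Phi(S_0)$ and $g(u) = s_0$ otherwise, one obtains a Borel map $g \colon (0,1) \to S$. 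For $h \in \Ha$,
\[
\int h(g(U)) \, dP = \int_0^1 h(g(u)) \, du = \int_K (h \circ \Phi^{-1}) \, d\hat m = \int_S h \, d\tilde m = \int h \, dm.
\]

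The main obstacle is the first step: producing the Radon-like representing measure $\tilde m$ in the countably additive, non-compact case, since $S$ is only normal and separable and need not be metrizable, so the usual Polish-space machinery does not apply directly on $S$. The cleanest workaround is to push $m$ through $\Phi$ first, constructing $\hat m$ on the Polish space $K$ where extension and regularity are classical, and then to deduce from the fact that $\Cts S \cap L^1(m)$ is already determined by the countable separating family $\{h_n\}$ that $\hat m$ is concentrated on $\Phi(S)$ and that the resulting $g$ takes values in $S$ Lebesgue--almost everywhere.
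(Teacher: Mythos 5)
There is a genuine gap at the foundation of your construction: the countable separating family $\{h_n\}\subset\Cts S$ need not exist. Normality plus separability does not provide a countable family of continuous functions separating the points of $S$; for a compact Hausdorff space the existence of such a family forces metrizability (the evaluation map would be a continuous injection of a compact space into $[-1,1]^{\N}$, hence an embedding), so $S=\beta\N$ --- which is compact, separable and normal and therefore satisfies the hypotheses of the theorem --- admits no such family. Consequently the map $\Phi$, on which everything else in your argument rests, is unavailable in general. Even when $\Phi$ does exist, two further steps are unjustified: first, in the countably additive non-compact case you assert that $\tilde m$ is concentrated on a $\sigma$-compact subset of $S$, but a countably additive measure on a merely normal, separable space need not be tight; second, you need $\hat m$ to be concentrated on $\Phi(S_0)$, whereas $\Phi(S)$ need not be a Borel subset of $K$ and the measure representing the functional $h\mapsto\int h\,dm$ on $K$ may well charge $\overline{\Phi(S)}\setminus\Phi(S)$ --- you flag this as the main obstacle but offer no argument that closes it.

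The paper's proof avoids all of this by never attempting to separate the points of $S$ or to realize $m$ as a Borel measure on $S$. It uses the map $H=\iota\circ G$ of Corollary \ref{cor skhorohod}, whose range is a countable dense subset $S_0\subset S$: conglomerability of $m$ with respect to $H$ (which needs only density of $S_0$ and openness of $\{h<0\}$) yields, via Theorem \ref{th companion}, a representing measure $\mu$ on $(0,1)$; normality of $S$ enters only to show that the generated $\sigma$-ring is all of $\Bor((0,1))$, after which the classical one-dimensional Skorohod (inverse c.d.f.) construction produces $Z$ with $Z_*\Lambda=\mu$ and one sets $g=H\circ Z$. The resulting $X=g(U)$ is purely atomic with values in $S_0$ and matches $m$ only on integrals of continuous functions --- which is all the theorem claims, and is precisely the weakening that makes the statement true for spaces such as $\beta\N$ where your stronger programme cannot be carried out.
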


\begin{proof}
If $S$ is compact then the restriction of $m$ to the 
minimal ring $\Ring_\Ha$ is countably additive. Let 
$H$ be the map defined in \eqref{GH}. Then, as was 
shown in the proof of Corollary \ref{cor skhorohod}, 
$m$ is $H$-conglomerative relatively to $\Cts S$ so 
that, by Theorem \ref{th companion},
\begin{equation}
\int hdm
	=
\int h(H)d\mu
\qquad
h\in\Ha
\end{equation}
for some $(\Ring,\mu)\in\Meas((0,1))$. 
We claim that $\sigma\Ring=\Bor((0,1))$. Recall 
that $\sigma\Ring$ is generated by sets of the 
form $\{h(H)>t\}$ which are Borel since $h$ is 
continuous and $H$ is Borel. Conversely, if 
$0\le a\le b\le1$ then the set $H[(a,b)]$ is a 
finite subset of $S$ -- and therefore closed. 
Since $S$ is normal, for any other finite subset 
$F$ of $H[(a,b)^c]$ we can find a function 
$f\in\Fun{S,[0,1]}$ such that $f=1$ on $H[(a,b)]$ 
and $f=0$ on $F$. Thus 
$(a,b)
	\subset
\{f(H)\ge1\}\in\sigma\Ring$. 
Since $H[(0,1)]$ is countable we find a sequence 
$\seqn f$ of such functions each vanishing on a 
finite subset of $H[(a,b)^c]$ so that the intersection 
$\bigcap_n\{f_n(H)\ge1\}$ is again an element 
of $\sigma\Ring$ and coincides with $(a,b)$. 
In other words, we can assume that $\mu$ is 
defined on the Borel subsets of $(0,1)$. From 
the classical Skhorohod theorem, we deduce 
the existence of an $S$ valued random quantity 
$Z$ supported by $((0,1),\Bor((0,1)),\Lambda)$ 
(with $\Lambda$ the Lebesgue measure on $(0,1)$) 
and admitting $\mu$ as its distribution. On its 
turn, $\Lambda$ 
is the distribution of $U$ under $P$. A repeated 
application of Theorem \ref{th companion} with 
$g=H\circ Z$ and $X= g(U)$ gives
\begin{align*}
\int hdm
	=
\int h(H)d\mu
	=
\int h(g)d\Lambda
	=
\int h(X)dP
\qquad
h\in\Ha.
\end{align*}
Thus the random quantity $X$ is supported by 
$(\Omega,\A,P)$ and represents $m$ relatively 
to $\Cts S$.
\end{proof}

\section{Applications to stochastic processes.}

We start this section with a result closely related 
to Theorem \ref{th representation norm}.

\begin{theorem}
\label{th representation BM}
Let $\Ha\subset\Cts\R$ be a Stonean sublattice, 
$X'=(X'_t:t\in\R_+)$ Brownian motion on some, 
filtered, standard probability space $(\Omega',\A,P)$. 
Write $\Neg$ to denote the family of sets 
$A\subset\Omega\times\R_+$ such that
$P^*(\pi_\Omega A)=0$.
$\phi\in\Lin\Ha_+$ if and only if there exists a 
minimal $(\Ring,\mu)\in\Meas(\Omega'\times\R_+)$ 
with $\Neg\subset\Ring$, 
$\mu(N)=0$ for all $N\in\Neg$,
\begin{equation}
\label{representation BM}
h(X')\in L^1(\mu)
\qtext{and}
\phi(h)
=
\int h(X')d\mu
\qquad
h\in\Ha.
\end{equation}
Moreover, $\mu$ is countably additive if and only if
$\lim_n\phi(h_n)=0$ for every decreasing sequence
$\seqn h$ in $\Ha_+$ which converges to $0$ in
Lebesgue measure.
\end{theorem}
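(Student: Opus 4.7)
The plan is to mimic Theorem \ref{th representation norm}, replacing the normal random variable by Brownian motion viewed as a single map $X' : \Omega' \times \R_+ \to \R$ via $X'(\omega, t) = X'_t(\omega)$. Write $\tilde\Omega = \Omega' \times \R_+$ and define $T \in \bLin{\Ha, \Fun{\tilde\Omega}}$ by $Th = h \circ X'$. Because $\Ha$ is a lattice and $T$ is positive, $T$ is directed via $\abs{Th} = T\abs{h}$, and positivity of $\phi$ makes it $T$-conglomerative: if $\phi(h) < 0$ then $\{h < 0\}$ is a non-empty open subset of $\R$, and since almost every Brownian path is continuous with $\liminf_t X'_t = -\infty$ and $\limsup_t X'_t = +\infty$, such a path visits $\{h < 0\}$, yielding $\inf Th < 0$. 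Theorem \ref{th representation} then provides $(\Ring, \mu) \in \Meas(\tilde\Omega)$ with $\phi(h) = F^\perp(Th) + \int Th\, d\mu$, and, exactly as in Theorem \ref{th representation norm}, the Stonean lattice structure lets one drop $F^\perp$.

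To ensure $\mu[\Neg] = \{0\}$, I would verify the sharpened condition \eqref{cond N}: given $\phi(h) < 0$, choose $\delta > 0$ so that $W = \{h < -\delta\}$ is non-empty. For any $N \in \Neg$ the projection $\pi_\Omega N$ has outer $P$-measure zero, so there exists $A \in \A$ containing $\pi_\Omega N$ with $P(A)$ arbitrarily small; almost every Brownian path visits $W$, so some $\omega \notin A$ whose path hits $W$ produces a pair $(\omega, t) \in N^c$ with $h(X'_t(\omega)) < -\delta$. Hence $\sup_{N \in \Neg} \inf_{N^c} Th \le -\delta$, and Theorem \ref{th representation N} yields a minimal $\mu$ annihilating $\Neg$ and satisfying \eqref{representation BM}.

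For the final equivalence, sufficiency is an application of Theorem \ref{th representation}(b) in the spirit of Theorem \ref{th representation norm}(i): the hypothesis implies that the positive functional $f \mapsto \int f \, d\mu$ restricted to bounded images of $\Ha$ is a Daniell integral, hence admits a countably additive representative, and by the minimality of $\mu$ the representative coincides with $\mu$. Necessity uses mutual absolute continuity of the law of $X'_t$ with Lebesgue measure for $t > 0$: a decreasing sequence $\seqn h$ in $\Ha_+$ with Lebesgue-null limit yields $h_n(X'_t) \to 0$ in $P$-measure for each $t > 0$, and a Fubini-plus-dominated-convergence argument against countably additive $\mu$ gives $\phi(h_n) \to 0$.

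The main obstacle will be the $\Neg$-null argument, since $\Neg$ is defined through outer $P$-measure on the first coordinate rather than on $\tilde\Omega$ itself; producing uniform conglomerability in \eqref{cond N} requires both the recurrence of Brownian motion (to hit $W$ off an arbitrary $P$-negligible projection) and normality of $X'_t$ (to link Lebesgue-null sets in $\R$ with $P$-null sets in $\Omega'$ and $\mu$-null sets in $\tilde\Omega$) inside a single coherent approximation scheme.
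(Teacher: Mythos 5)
Your argument for the main claim is correct but takes a genuinely different probabilistic route from the paper's. The paper fixes a single time $t>0$ and observes that for $N\in\Neg$ the section $N_t=\{\omega:(\omega,t)\in N\}$ is $P$-null, so that Theorem \ref{th representation norm} (i.e.\ full support of the normal marginal $X'_t$) already gives $\inf_{\omega\in N_t^c}h\big(X'_t(\omega)\big)<0$ whenever $\phi(h)<0$, and the infimum over all of $N^c$ can only be smaller; condition \eqref{cond N} follows and Theorem \ref{th representation N} applies. You instead use recurrence of the whole path ($\limsup_tX'_t=+\infty$, $\liminf_tX'_t=-\infty$, plus continuity) to find, off a $P$-null measurable set containing $\pi_\Omega N$, a path entering $\{h<-\delta\}$. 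Both arguments are sound and both funnel into Theorem \ref{th representation N}; the paper's is lighter (one fixed time slice, quoting the preceding theorem verbatim), while yours would survive replacing Brownian motion by any process whose almost sure path range is dense in $\R$, regardless of its marginals. Your handling of directedness and of the disappearance of $F^\perp$ matches the paper.

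The genuine gap is in the \emph{necessity} half of the countable-additivity equivalence. You propose to get $\phi(h_n)\to0$ from $h_n(X'_t)\to0$ in $P$-measure for each fixed $t$ by \quot{Fubini plus dominated convergence against $\mu$}. But $\mu$ is an arbitrary minimal measure on $\Omega'\times\R_+$ vanishing on $\Neg$: it has no product structure and no disintegration over $t$, so the behaviour of the time slices under $P$ gives no control on $\int h_n(X')d\mu$ (consider a $\mu$ concentrated on the graph of a random time $\omega\mapsto(\omega,\tau(\omega))$). The paper instead argues directly on $\Omega'\times\R_+$: with $h=\inf_nh_n$ it aims to show $\mu^*\big(h(X')>\varepsilon\big)=0$ by placing $\{h(X')>\varepsilon\}$ in $\Neg$ and then invokes monotone convergence for the countably additive $\mu$. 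Your Fubini step must be replaced by something of this type --- and even that requires care, since $\pi_\Omega\{h(X')>\varepsilon\}$ is the set of paths that ever visit the Lebesgue-null set $\{h>\varepsilon\}$, and one-dimensional Brownian motion a.s.\ hits any fixed non-empty closed Lebesgue-null set (e.g.\ a point), so this projection need not be $P$-null. The sufficiency half of your sketch (Daniell property plus minimality) is consistent with the paper's, which routes it through part (b) of Theorem \ref{th representation N} and the regularity of the countably additive measure representing $\phi$ on $\R$.
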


\begin{proof}
By Theorem \ref{th representation norm}, if 
$\phi(h)<0$ and $N\in\Neg$ then, since 
$N_t=\{\omega:(\omega,t)\in N\}$ is $P$
null
\begin{align*}
0
>
\inf_{\omega\in N_t^c}h(X'_t)(\omega)
\ge
\inf_{(\omega,s)\in N^c}h(X'_s)(\omega).
\end{align*}
The main claim follows immediately. The last
claim may be proved as in Theorem 
\ref{th representation norm} upon noting that
$X'[N^c]^c$ has $0$ Lebesgue measure when
$N\in\Neg$. But this is again clear since
$\{X'_t\in X'[N^c]^c\}$ is $P$ null. The rest of
that proof remains unchanged.
\end{proof}

Let $\mathcal I$ be the family of finite subsets 
of $\R_+$. For each 
$\alpha=\{t_1,\ldots,t_n\}\in\mathcal I$, 
let $\pi_\alpha$ be the projection
\begin{equation}
\pi_\alpha(s)=(s_{t_1},s_{t_2},\ldots,s_{t_n})
\qquad
s\in\Fun{\R_+}.
\end{equation}
If $X=(X_t:t\in\R_+)$,  write 
$X_\alpha=(X_t:t\in\alpha)$.

\begin{corollary}
\label{cor BM}
Let $X'=(X'_t:t\in\R_+)$ be Brownian motion on 
some classical probability space $(\Omega',\A,P)$
and $(m_\alpha:\alpha\in\mathcal I)$  a projective 
family of probabilities (namely 
$m_\alpha\in fa(\Bor(\R^\alpha))_+$ is the marginal 
of $m_\beta$ whenever $\alpha\subset\beta$). 
There exists a probability structure $(\A,\mu)$ on 
$\Omega$ such that 
\begin{equation}
h(X'_\alpha)\in L^1(\mu)
\qtext{and}
\int h dm_\alpha
	=
\int h(X'_\alpha)d\mu
\qquad
\alpha\in\mathcal I,\ 
h\in\Cts{\R^\alpha}\cap L^1(m_\alpha).
\end{equation}
If $m_\alpha$ is countably additive, then 
\begin{equation}
m_\alpha(B)
	=
\mu(X'_\alpha\in B)
\qquad
B\in\Bor(\R^\alpha).
\end{equation}
\end{corollary}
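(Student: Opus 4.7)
The plan is to apply Theorem \ref{th representation} to the linear functional on the vector lattice of continuous cylinder functions of $X'$ generated by $(m_\alpha)$, then eliminate the singular component by truncation.

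First, I would let $\Ha$ be the linear span in $\Fun{\Omega'}$ of functions $h(X'_\alpha)$ with $\alpha \in \mathcal{I}$ and $h \in \Cts{\R^\alpha} \cap L^1(m_\alpha)$; any finite combination rewrites as $g(X'_\beta)$ for $\beta = \bigcup_n \alpha_n$ and $g = \sum_n c_n\, h_n \circ \pi^{(\beta)}_{\alpha_n} \in \Cts{\R^\beta} \cap L^1(m_\beta)$, using projective consistency to transfer integrability, so $\Ha$ is a Stonean vector lattice. Since $X'_\beta$ has a nondegenerate Gaussian distribution, $X'_\beta[\Omega']$ cannot omit any open set (under penalty of zero $P$-measure of a strictly positive density) and is therefore dense in $\R^\beta$. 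Continuity of $g$ then forces $g(X'_\beta) = g'(X'_\beta)$ to imply $g = g'$ on $\R^\beta$, so $\phi(g(X'_\beta)) := \int g \, dm_\beta$ defines unambiguously a positive linear functional on $\Ha$, independent of the enlargement $\beta$ by projective consistency.

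Second, I take $T$ as the positive identity embedding $\Ha \hookrightarrow \Fun{\Omega'}$, which is directed by case ($\alpha$) of Section \ref{sec integral} since $\Ha$ is a vector lattice. For $f = g(X'_\beta) \in \Ha$ density gives $\inf_{\omega'} f(\omega') = \inf_{s \in \R^\beta} g(s) \le \int g \, dm_\beta = \phi(f)$, using that $m_\beta$ is a probability, so $\phi$ is $T$-conglomerative and the hypothesis of Theorem \ref{th representation}.\tiref{a} holds. That theorem produces $F^\perp \in \Lin{L}_+$ vanishing on $L \cap \B(\Omega')$ and a probability $(\Ring, \mu) \in \Meas(\Omega')$ with $\phi(f) = F^\perp(f) + \int f \, d\mu$ on $\Ha$. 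To kill $F^\perp$ on $\Ha$ I would take $h \ge 0$ and set $h_k = h \wedge k$: $h_k$ is bounded, so $\phi(h_k) = \int h_k(X'_\alpha) \, d\mu$, and Lemma \ref{lemma L1} yields both $\int h \, dm_\alpha = \lim_k \int h_k \, dm_\alpha$ and $\int h(X'_\alpha) \, d\mu = \lim_k \int h_k(X'_\alpha) \, d\mu$. Passing to the limit gives $\int h \, dm_\alpha = \int h(X'_\alpha) \, d\mu$, hence $F^\perp(h(X'_\alpha)) = 0$; the decomposition $h = h^+ - h^-$ extends this to all of $\Ha$.

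For the last claim, fix $\alpha$ with $m_\alpha$ countably additive, hence Radon on $\R^\alpha$. For closed $F \subset \R^\alpha$ choose $f_n \downarrow \set F$ in $\Cts[b]{\R^\alpha}_+$; monotone convergence gives $\int f_n \, dm_\alpha \to m_\alpha(F)$, and from the preceding step $\int f_n(X'_\alpha) \, d\mu = \int f_n \, dm_\alpha$. Applying \eqref{tchebycheff} to $\sset{X'_\alpha \in F} \le f_n(X'_\alpha)$ yields $\mu^*(X'_\alpha \in F) \le m_\alpha(F)$; a symmetric argument for open $U = F^c$ with $g_n \uparrow \set U$ gives $\mu_*(X'_\alpha \in U) \ge m_\alpha(U)$, and taking complements shows $\{X'_\alpha \in F\} \in \A(\mu)$ with $\mu(X'_\alpha \in F) = m_\alpha(F)$. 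Outer regularity of $m_\alpha$ then sandwiches any Borel $B$ between closed and open sets of arbitrarily close measure, yielding $\mu(X'_\alpha \in B) = m_\alpha(B)$. The principal obstacle throughout is the well-definedness of $\phi$, which rests on the simultaneous use of projective consistency of $(m_\alpha)$ and density of $X'_\beta[\Omega']$.
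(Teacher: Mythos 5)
Your proof of the first display is correct and is in substance the paper's own argument. The paper assembles the $m_\alpha$ into a single finitely additive probability $m$ on the cylinder algebra of $\Fun{\R_+}$ and then invokes Theorem \ref{th companion}, whereas you build the lattice of cylinder functions of $X'$ directly on $\Omega'$ and apply Theorem \ref{th representation}; the two routes are equivalent, and both rest on the same fact, namely that every non empty open subset of $\R^\alpha$ is charged by the law of $X'_\alpha$ (the paper derives this from independent increments, you from non degeneracy of the Gaussian and density of the range $X'_\alpha[\Omega']$). Your truncation argument killing $F^\perp$ is precisely the remark made inside the proof of Theorem \ref{th companion} ($\phi(h)=\lim_k\phi(h\wedge k)$ for $h\in\Ha_+$), and the well-definedness of $\phi$ via density plus projective consistency is sound.

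The last step, however, has a genuine gap. From $f_n\downarrow\set F$ you correctly get $\mu^*(X'_\alpha\in F)\le m_\alpha(F)$, and from $g_n\uparrow\set U$ with $U=F^c$ you get $\mu_*(X'_\alpha\in U)\ge m_\alpha(U)$. But since $\mu$ is a probability, $\mu_*(X'_\alpha\in U)=1-\mu^*(X'_\alpha\in F)$, so the second inequality is literally the first one restated; ``taking complements'' yields a tautology, not the missing lower bound $\mu_*(X'_\alpha\in F)\ge m_\alpha(F)$, and membership of $\{X'_\alpha\in F\}$ in $\A(\mu)$ does not follow. Nor can that bound come from continuous minorants: any continuous $f\le\set F$ has $\{f>0\}\subset F^{\circ}$, so for a closed $F$ with empty interior and $m_\alpha(F)>0$ the continuous functions give no control at all on $\mu_*(X'_\alpha\in F)$. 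What is actually needed is countable additivity of the $\alpha$-marginal of $\mu$: then $\{X'_\alpha\in F\}=\bigcap_n\{X'_\alpha\in U_n\}$ with $U_n$ the open $1/n$-neighbourhood of $F$ lies in the generated $\sigma$-ring, $\mu(X'_\alpha\in F)=\lim_n m_\alpha(U_n)=m_\alpha(F)$, and regularity of $m_\alpha$ then handles arbitrary Borel sets. This countable additivity can be extracted from the hypotheses --- when $m_\alpha$ is countably additive, $h\mapsto\int h\,dm_\alpha$ is a Daniell integral on $\Cts{\R^\alpha}\cap L^1(m_\alpha)$ by dominated convergence, and minimality (Lemma \ref{lemma bochner}) forces the marginal of $\mu$ to coincide with the unique countably additive representative --- but this step is absent from your write-up. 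In fairness, the paper's own justification (``as in Example \ref{ex normal}'') leans on \eqref{distr}, which is stated there only for countably additive $\mu$, so the same supplement is needed there too; still, as written your complementation argument does not close the claim.
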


\begin{proof}
As usual, a projective family of probabilities 
induces a unique probability on the algebra
$\Sigma
=
\big\{\pi_\alpha^{-1}(B):\alpha\in\mathcal I,
B\in\Bor(\R^\alpha)\big\}$
of finite dimensional cylinders obtained by letting
\begin{equation}
m\big(\pi_\alpha^{-1}A\big)
	=
m_\alpha(A)
\qquad
A\in\Bor(\R^\alpha),\ 
\alpha\in\mathcal I.
\end{equation}
If $g\in\Fun{\R^\alpha}$ and 
$h=g\circ\pi_\alpha$ then
$
\{h>t\}
	=
\pi_\alpha^{-1}(\{g>t\})
$
so that from Lemma \ref{lemma L1} we conclude 
\begin{equation*}
\int h dm
	=
\int g dm_\alpha
\end{equation*}
whenever either side is well defined. Let 
$\Ha
	=
\{g\circ\pi_\alpha:
g\in\Cts{\R^\alpha},\ 
\alpha\in\mathcal I\}\cap L^1(m)
$. 
If $h\in\Ha$ and $\int hdm<0$, then 
$\int h_\alpha dm_\alpha<0$ 
for some 
$\alpha=\{t_1<\ldots<t_n\}\in\mathcal I$. 
Since
$\{h_\alpha<0\}$ is open and non empty, 
there exist open, non empty sets 
$B_1,\ldots,B_n\subset\R$ such that 
$x_i-x_{i-1}\in B_i$ for $i=1,\ldots,n$ (and $x_0=0$) 
implies $h_\alpha(x_1,\ldots,x_n)<0$. Therefore, 
$
P(X'_{t_1},\ldots,X'_{t_n}\in\{h_\alpha<0\})
\ge
\prod_{i=1}^nP(X'_{t_i}-X'_{t_{i-1}}\in B_i)
>
0
$
so that $\inf_\omega h_\alpha(X'_\alpha)<0$ and 
$m$ is $X'$-conglomerative. The second claim, as 
in Example \ref{ex normal}, follows from metric 
spaces being normal.
\end{proof}

Corollary \ref{cor BM} is related to 
\cite[Theorem 1]{dubins bizarre} 
and, in Dubins' peculiar terminology, it asserts that 
Brownian motion is {\it cousin} to any stochastic 
process. Dubins main finding is a necessary and
sufficient condition for the existence of cousins 
with almost all paths in a given class. His claim 
is an easy corollary of our previous results. We
give a simple proof for completeness.

\begin{corollary}[Dubins]
Let $X$ be a stochastic process on a probability 
space $(\Omega,\A,m)$ and let 
$\mathbb Y\subset\Fun{\R_+}$ satisfy:
\begin{equation}
\label{match}
\forall (\omega,\alpha)\in\Omega\times\mathcal I,
\quad 
\exists Y\in\mathbb Y
\qtext{such that}
Y(t)=X(\omega,t)
\qquad
t\in\alpha.
\end{equation}
There is a process $X'$ on a probability space 
$(\Omega',\Sigma,\mu)$ with $\mu$-a.a. paths in 
$
\mathbb Y
$ 
and such that
\begin{equation}
g(X_\alpha')\in L^1(\mu)
\qtext{and}
\int g(X_\alpha)dm
	=
\int g(X'_\alpha)d\mu
\qquad
\alpha\in\mathcal I,\ 
g\in\Fun{\R^\alpha},\ 
g(X_\alpha)\in L^1(m).
\end{equation}
\end{corollary}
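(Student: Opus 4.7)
The plan is to take $\Omega'=\mathbb{Y}$ and let $X'$ be the coordinate process on $\mathbb{Y}$, namely $X'_t(\omega')=\omega'(t)$. Then by construction every path of $X'$ lies in $\mathbb{Y}$, so the ``$\mu$-a.a.\ paths in $\mathbb{Y}$'' requirement is automatically satisfied once we produce any probability $\mu$ on some $\Sigma$ over $\Omega'$. What remains is to build $\mu$ with the prescribed finite-dimensional matching.

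To apply Theorem \ref{th companion}, view both $X$ and $X'$ as maps into the common state space $\mathcal{S}=\Fun{\R_+}$ and consider
\begin{equation*}
\Ha=\bigl\{g\circ\pi_\alpha:\alpha\in\mathcal I,\ g\in\Fun{\R^\alpha},\ g(X_\alpha)\in L^1(m)\bigr\}\subset\Fun{\mathcal S}.
\end{equation*}
Unifying the indexing set via $\pi_{\alpha\cup\beta}$, one checks that $\Ha$ is a Stonean vector sublattice (closed under sums, lattice operations, and truncation at $1$). The matching hypothesis \eqref{match} says precisely that $X_\alpha[\Omega]\subset X'_\alpha[\Omega']$ for every $\alpha\in\mathcal I$, and hence for any $h=g\circ\pi_\alpha\in\Ha$,
\begin{equation*}
\inf_{\omega'\in\Omega'}h\bigl(X'(\omega')\bigr)
=\inf_{Y\in\mathbb{Y}}g(Y_\alpha)
\le\inf_{\omega\in\Omega}g(X_\alpha(\omega))
\le\int h(X)\,dm,
\end{equation*}
the last inequality holding because $m$ is a probability. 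This is exactly condition \eqref{conglo X prob}.

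Theorem \ref{th companion}\tiref{a} then produces a minimal $(\Ring,\mu)\in\Meas(\Omega')$ with $\mu$ a probability such that $h(X')\in L^1(\mu)$ and $\int h(X)\,dm=\int h(X')\,d\mu$ for every $h\in\Ha$; specialising to $h=g\circ\pi_\alpha$ yields the desired identity $\int g(X_\alpha)\,dm=\int g(X'_\alpha)\,d\mu$. Setting $\Sigma=\Ring$ (or the algebra it generates) completes the construction of the probability space $(\Omega',\Sigma,\mu)$.

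The only substantive step is recognising that the matching condition is a direct reformulation of the strong conglomerability inequality required by Theorem \ref{th companion}\tiref{a}: once $X'$ is realised as the coordinate process on $\mathbb{Y}$, the inclusion of finite-dimensional image sets is immediate and does all the work. Notably, no regularity, measurability, or countable-additivity assumption on $m$ is needed, which highlights the utility of the finitely additive framework adopted throughout the paper.
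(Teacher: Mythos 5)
Your proof is correct and follows essentially the same route as the paper's: both realize $X'$ as the coordinate (evaluation) process over $\mathbb Y$, observe that \eqref{match} turns $\phi(h)=\int h(X)dm$ into a conglomerative functional on the Stonean lattice $\Ha=\{g\circ\pi_\alpha\}$, and invoke the representation machinery to get the probability $\mu$. The only cosmetic differences are that you take $\Omega'=\mathbb Y$ and route the argument through Theorem \ref{th companion}\tiref{a}, whereas the paper applies Theorem \ref{th representation} directly with $T(h)(Y)=h(Y)$ on $\mathbb Y$ and then extends $\mu$ to all of $\Fun{\R_+}$ so that $\mu(\mathbb Y^c)=0$.
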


\begin{proof}
Write
$$
\Ha
	=
\big\{g\circ\pi_\alpha:
\alpha\in\mathcal I,\ 
g\in\Fun{\R^\alpha},\ 
g(X_\alpha)\in L^1(m)\big\},
$$
$\Omega'=\Fun{\R_+}$ and define 
$T\in\bLin{\Ha,\Fun{\mathbb Y}}$ 
by letting 
$T(h)(Y)=h(Y)$ for each
$h\in\Ha$ and $Y\in\mathbb Y$. 
Then, $T$ is directed and, by \eqref{match}, 
the linear functional $\phi(h)=\int h(X)dm$ 
is $T$-conglomerative. By Theorem 
\ref{th representation} there exists a minimal 
$(\Ring_0,\mu_0)\in\Meas(\mathbb Y)$ such 
that
\begin{equation*}
\int h(X)dm
	=
\int hd\mu_0
\qquad
h\in\Ha.
\end{equation*}
Since $m$ is a probability, then $\Ring_0$ is an 
algebra and $\mu_0$ a probability. Let 
\begin{equation*}
\Sigma
	=
\big\{A\subset\Omega':A\cap\mathbb Y\in\Ring_0\big\}
\qtext{and}
\mu(A)
	=
\mu_0(A\cap\mathbb Y)
\qquad
A\in\Sigma.
\end{equation*} 
Then, $\Sigma$ is an algebra of subsets of $\Omega'$, 
$\mu$ a probability on $\Sigma$ with 
$\mu(\mathbb Y^c)=0$ and $X'(w,t)=w(t)$ a 
stochastic process on $(\Omega',\Sigma,Q)$ with 
$X'_w=w$. Moreover, 
$
\int hd\mu_0=\int h(X')d\mu
$
for all $h\in\Ha$.
\end{proof}

Dubins deduces from this result that any stochastic 
process admits \textit{cousins} having continuous 
or polynomial or stepwise linear paths.

\end{document}